\newcommand{\typered}[1]{{\textcolor{red}{#1}}}
\newcommand{\typeblue}[1]{{\textcolor{blue}{#1}}}
\newcommand{\orbit}[0]{\boldsymbol{\mathcal{A}}}
\DeclareMathAlphabet\mathbfcal{OMS}{cmsy}{b}{n}
\global\let\tikz@ensure@dollar@catcode=\relax
\setlist{
  listparindent=\parindent,
  parsep=0pt,
}
\numberwithin{equation}{section}
\theoremstyle{plain} 
\newtheorem{theorem}{Theorem}[section]
\newtheorem{Lemma}[theorem]{Lemma}
\newtheorem{Proposition}[theorem]{Proposition}
\newtheorem{Corollary}[theorem]{Corollary}
\newtheorem{Condition}[theorem]{Condition}
\newtheorem{definition}[theorem]{Definition}
\newtheorem{remark}[theorem]{Remark}
\newtheorem{assumption}[theorem]{Assumption}
\theoremstyle{definition} 
\newtheorem{Example}[theorem]{Example}
\newcommand\CorrespondingAuthor[1]{
  \begingroup
  \def\@makefnmark{}
  \footnotetext{Corresponding author: #1}
  \endgroup
}
\renewenvironment{abstract}{%
  \small%
  \providecommand\keywords{%
    \par\medskip\noindent\textit{Keywords:}\xspace}%
  \begin{center}%
    \bfseries \abstractname\vspace{-.5em}\vspace{\z@}%
  \end{center}%
  \quote%
}
{
\endquote}
\DeclareMathAlphabet{\mathpzc}{T1}{pzc}{m}{it}
\newcommand{\Indic}[1]{\mathds{1}{\left\{#1\right\}}}
\newcommand{\dd}{\mathrm{d}}
\def\*{\discretionary{}{\hbox{\ensuremath\cdot}\thinspace}{}}
\definecolor{darkmagenta}{rgb}{0.5,0,0.5}
\definecolor{darkgreen}{rgb}{0.2,0.3,0}
\definecolor{darkblue}{rgb}{0,0,0.6}
\definecolor{darkred}{rgb}{0.8,0,0}
\definecolor{mellow}{rgb}{.847, 0.72, 0.525}
\newcommand{\magenta}[1]{\textcolor{darkmagenta}{#1}}
\begin{document}

\title{RAP-modulated Fluid Processes: \\
First Passages and the Stationary Distribution}
\author[1]{Nigel G. Bean}
\author[1]{Giang T. Nguyen}
\author[2]{Bo F. Nielsen}
\author[1]{Oscar Peralta}
\affil[1]{School of Mathematical Sciences, The University of Adelaide}
\affil[2]{DTU Compute, Technical University of Denmark}
\date{\empty}
\maketitle

\begin{abstract} 
We construct a stochastic fluid process with an underlying piecewise deterministic Markov process (PDMP) akin to the one used in the construction of the rational arrival process (RAP) in \cite{asm:99}, which we call the RAP-modulated fluid process. As opposed to the classic stochastic fluid process driven by a Markov jump process, the underlying PDMP of a RAP-modulated fluid process has a continuous state space and is driven by matrix parameters which may not be related to an intensity matrix. Through novel techniques we show how well-known formulae associated to the classic stochastic fluid process, such as first passage probabilities and the stationary distribution of its queue, translate to its RAP-modulated counterpart.
\keywords{Stochastic fluid process, rational arrival process, matrix-exponential distribution, first passage probability, stationary distribution} 

\emph{2010 Mathematics Subject Classification:} Primary: 60J25, 60G17; Secondary: 60K25
\end{abstract} 

\section{Introduction}

A \emph{stochastic fluid model}  $(\mathcal{X}, \varphi) = \{(X_t, \varphi_t)\}_{t \geq 0}$ is a Markov additive process in which the background component $\varphi$ is a Markov jump process with finite state space and the additive component $\mathcal{X}$ is piecewise linear, with a rate that depends on the background state:
	\begin{align*} 
		X_t = \int_0^t r_{\varphi_s} \dd s, \quad t\ge 0. 
	\end{align*} 
The study of steady-state aspects of stochastic fluid models goes back to \cite{Rogers:1994uoa, Asmussen:1995jm, Karandikar:1995vo}, and its literature has been prolific from both applied and theoretical perspectives. Latouche and Nguyen~\cite{latouche2018analysis} provide a comprehensive survey on the topic; much of the existing analysis exploits the probabilistic interpretation of the background process $\varphi$ with finite state space.


A related concept is a \emph{Markovian arrival process} $(\mathcal{N}, \mathcal{J}) = \{(N_t, J_t)\}_{t \geq 0}$, where $\mathcal{N}$ is a counting process and $\mathcal{J}$ is the underlying Markov jump process with initial distribution $\bm{\alpha}$, hidden jumps according to intensity matrix $C$, and observable jumps according to according to a nonnegative intensity matrix~$D$. 
%
%
%
Jump times of the latter kind correspond to the arrival epochs of $\mathcal{N}$. 
%
%
Asmussen and Bladt \cite{asm:99} introduce \emph{rational arrival processes} (RAP), a generalisation of Markovian arrival processes for which 
 $\bm{\alpha}$, 
 $C$ and $D$ are not necessarily related to the parameters of a Markov jump process. They show that the arrivals associated to this algebraic generalisation are determined by an underlying continuous-state-space piecewise-deterministic Markov process which here we refer to as \emph{an orbit process}. This orbit process evolves  deterministically between its jump times, which occur according to a given space-dependent intensity function. The arrival epochs of the RAP can then be regarded as the jump times of the orbit process, allowing for a probabilistic analysis of the former using the physical interpretation of the latter.
 %
%
%
%

In the present paper, we consider an extension of stochastic fluid models, which we call \emph{RAP-modulated fluid processes}. 
A RAP-modulated fluid process $(\mathcal{R}, \orbit) = \{(R_t, \boldsymbol{A}_t)\}_{t \geq 0}$ is a Markov additive process in which the additive component $\mathcal{R}$ is still piecewise linear but the background component $\orbit$ is now an orbit process. A similar generalisation was introduced in \cite{Bean:2010jc}, from  \emph{Quasi-Birth-Death processes}, where the additive component lives on the integers and is modulated by a Markovian jump process, to \emph{QBD-RAP}, where the additive component still lives on the integers but is now modulated by a RAP. The class of RAP-modulated fluid processes we introduce here goes beyond stochastic fluid processes; for instance, one may use a Markov renewal process with matrix-exponential times in order to model the orbit process of a RAP-modulated fluid process. 

Additional to defining the class of RAP-modulated fluid processes, the contributions of this paper include providing first passage probabilities, an expression for the stationary distribution of its queue, and an algorithm for computing the expected value of the orbit process $\boldsymbol{\mathcal{A}}$ at the first downcrossing times of the level process to level $0$.  

The structure of the paper is as follows. We define in Section~\ref{sec:proto-FRAP} the simplest RAP-modulated fluid process, which we call a \emph{simple RAP-modulated fluid process}. Although simplistic in its nature, this process helps us introduce the physical analysis and discuss necessary conditions of a RAP-modulated fluid process. In Section~\ref{sec:generalmodel}, we give a precise definition of a RAP-modulated fluid process and present first passage results which exemplify the framework and techniques used throughout the manuscript. Then, we focus on the case in which the level process $\mathcal{R}$ of the RAP-modulated fluid process is nowhere-constant in Section~\ref{sec:nozerorates}, and allow $\mathcal{R}$ to be piecewise-constant in Section~\ref{sec:withzerorates}. 
%
%

Our work demonstrates that several results of stochastic fluid models
translate into the framework of RAP-modulated fluid processes, although significantly more complicated techniques are required to analyze the latter.  

\section{Simple RAP-modulated fluid process}
	\label{sec:proto-FRAP}
In the following we introduce the simplest non-trivial example of a RAP-modulated fluid process, $(\mathcal{R}, \orbit)=\{(\mathcal{R}_t, \bm{A}_t )\}_{t \geq 0}$, which we refer to as \emph{a simple RAP-modulated fluid process}. To that end, first we define $\orbit=\{\bm{A}_t\}_{t\ge 0}$, the \emph{(simple) orbit process}. 

The process $\orbit$ is a c\`adl\`ag piecewise-deterministic Markov process (PDMP) with state space $\cup_{k\in\{+,-\}}\mathfrak{Z}^k$, where for each $k\in\{+,-\}$, $\mathfrak{Z}^k$ is a subset of the affine hyperplane $\{\boldsymbol{x}\in\mathds{R}^{m^k}:\boldsymbol{x}\bm{1}=1\}$ for some fixed $m^k\ge 1$. Here, the elements of $\mathfrak{Z}^k$ are regarded as row vectors and $\bm{1}$ is a column vector of ones of appropriate dimension. Thus, $\orbit$ is a row{-}vector process of varying dimension, either $m^+$ or $m^-$ depending whether it is in $\mathfrak{Z}^+$ or $\mathfrak{Z}^-$ at the given instant. In general $m^+\neq m^-$, but even in the case $m^+=m^-$ the subsets $\mathfrak{Z}^+$ and $\mathfrak{Z}^-$ will be considered to belong to different spaces, and thus they will always be disjoint. We let the initial point $\bm{A}_0$ be arbitrary but fixed.

Each PDMP is characterized by its motion between jumps, jump intensity and transition mechanism at its jump epochs \cite{Davis:1984vi}, properties which we define for $\orbit$ next. During an interval without jumps, say $[t,t+h)$ with $h> 0$, the orbit process $\orbit$ evolves according to the system of ordinary differential equations (ODE) given by 
\begin{align}
	\label{eq:ODE1}
	\frac{\dd \bm{A}_s}{\dd s}= \left\{\begin{array}{cl}
	\bm{A}_sC^+ - (\bm{A}_sC^+\bm{1})\bm{A}_s&\mbox{for } \bm{A}_s\in\mathfrak{Z}^+,\\
	\vspace*{-0.3cm} \\
	\bm{A}_sC^- - (\bm{A}_sC^-\bm{1})\bm{A}_s&\mbox{for }  \bm{A}_s\in\mathfrak{Z}^-, \quad s\in (t,t+h),
\end{array}\right. 
\end{align}
for some $C^+\in \mathds{R}^{m^+\times m^+}$ and  $C^- \in \mathds{R}^{m^-\times m^-}$. The solution to the ODE (\ref{eq:ODE1}) is
\begin{align}
	\label{eq:solODE1}
		\bm{A}_{t+r} = \left\{\begin{array}{cl} 
  \displaystyle\frac{\bm{A}_te^{C^+r}}{\bm{A}_te^{C^+r}\bm{1}}\in\mathfrak{Z}^+&\mbox{if } \bm{A}_t\in\mathfrak{Z}^+, \\
  \vspace*{-0.2cm} \\
  \displaystyle\frac{\bm{A}_te^{C^-r}}{\bm{A}_te^{C^-r}\bm{1}} \in\mathfrak{Z}^-&\mbox{if } \bm{A}_t\in\mathfrak{Z}^-,\quad r\in [0,h).
  \end{array}\right.
  	\end{align}
  Note that in (\ref{eq:solODE1}) we implicitly assume that, for each initial point in $\mathfrak{Z}^+\cup\mathfrak{Z}^-$, the system of ODEs (\ref{eq:ODE1}) evolves entirely within $\mathfrak{Z}^+\cup\mathfrak{Z}^-$. 

As $\orbit$ evolves within $\mathfrak{Z}^+\cup\mathfrak{Z}^-$, jump epochs occur according to a location-dependent intensity function $\lambda:\mathfrak{Z}^+\cup\mathfrak{Z}^-\mapsto\mathds{R}_+$ given by
\begin{align}
	\label{eq:jumpint1}
	\lambda(\bm{A}_t)  = \left\{\begin{array}{cc}
		\bm{A}_tD^{+-}\bm{1}&\mbox{if } \bm{A}_t \in \mathfrak{Z}^+,\\ 
		\vspace*{-0.3cm} \\
		\bm{A}_t D^{-+}\bm{1}&\mbox{if } \bm{A}_t \in \mathfrak{Z}^-,
	\end{array}\right.
\end{align}
for some $D^{+-}\in \mathds{R}^{m^+\times m^-}$ and $D^{-+}\in \mathds{R}^{m^-\times m^+}$. Since $\lambda(\cdot)$ is assumed to be a nonnegative function, it indeed corresponds to a valid intensity function.             

\begin{Condition} 
	$C^+\bm{1} + D^{+-}\bm{1} = \bm{0}$ and $C^-\bm{1} + D^{-+}\bm{1}=\bm{0}$.
\end{Condition} 

This condition implies that $\lambda$ can alternatively be written as
\begin{align}
  \label{eq:jumpint5}
  \lambda(\bm{A}_t)  = \left\{\begin{array}{cc}
    -\bm{A}_tC^{+}\bm{1}&\mbox{if } \bm{A}_t \in \mathfrak{Z}^+,\\ 
    \vspace*{-0.3cm} \\
    -\bm{A}_t C^{-}\bm{1}&\mbox{if } \bm{A}_t \in \mathfrak{Z}^-.
  \end{array}\right.
\end{align}
Using (\ref{eq:solODE1}) and (\ref{eq:jumpint5}) it can be readily verified that
\begin{align}
	\label{eq:nojumps6}
	\mathds{P}\left(\left.\mbox{$\orbit$ has no jumps in $[t, t+h]$}\;\right|\bm{A}_t\right)= \left\{\begin{array}{cc}
  \bm{A}_te^{C^+h}\bm{1}&\mbox{if } \bm{A}_t\in\mathfrak{Z}^+,\\
  \vspace*{-0.2cm} \\
\bm{A}_te^{C^-h}\bm{1}&\mbox{if } \bm{A}_t\in\mathfrak{Z}^-.\end{array}\right.
\end{align}
Indeed, {by} 
\cite{Davis:1984vi}, the function 
%
	$F(h):=\mathds{P}\left(\left.\mbox{$\orbit$ has no jumps in $[t, t+h]$}\;\right|\bm{A}_t, \bm{A}_t\in\mathfrak{Z}^k\right)$
%
corresponds to the unique differentiable solution of
\begin{align}
	\label{eq:probnojumpsaux3}
	\log F(h)=-\int_0^h \lambda\left(\frac{\bm{A}_te^{C^kr}}{\bm{A}_te^{C^kr}\bm{1}}\right)\dd r = \int_0^h \left[\frac{\bm{A}_te^{C^kr}}{\bm{A}_te^{C^kr}\bm{1}}\right]C^k\bm{1}\dd r.
	\end{align}
That $F(h) = \bm{A}_te^{C^kh}$ is a solution follows by differentiating both sides of (\ref{eq:probnojumpsaux3}).

Finally, 
given that a jump occurs at time $t$, the orbit process $\orbit$ will directly jump to
\begin{align}
	\label{eq:detjumps3}
	\frac{\bm{A}_{t^-} D^{+-}}{\bm{A}_{t^-}D^{+-}\bm{1}}\in\mathfrak{Z}^- \mbox{ if } \bm{A}_{t^-}\in\mathfrak{Z}^+,\quad \mbox{and} \quad 
\frac{\bm{A}_{t^-}D^{-+}}{\bm{A}_{t^-}D^{-+}\bm{1}}\in\mathfrak{Z}^+ \mbox{ if }  \bm{A}_{t^-}\in\mathfrak{Z}^-,
\end{align}
so that a jump originating from $\bm{x} \in\mathfrak{Z}^+$ will land at a deterministic point in $\mathfrak{Z}^-$, and vice versa.
%

%
%
%
Note that the motion between jumps and jump behaviour of the PDMP $\orbit$ depend only on the matrices $C^+, C^-, D^{+-}$ and $D^{-+}$. Moreover, these matrices implicitly dictate some requirements on the state space $\mathfrak{Z}^+\cup\mathfrak{Z}^-$ through Equations (\ref{eq:solODE1}), (\ref{eq:jumpint1}) and (\ref{eq:detjumps3}). Verifying if a set of matrices is \emph{compatible} with a given state space is by no means trivial, which is a known issue in the context of matrix--exponential distributions and RAPs \cite{bladt2017matrix}. Below we present two examples of \emph{valid} orbit processes.

\begin{Example}[Markov jump process]\label{ex:MJP1}
Let $C^+$ and $C^-$ be subintensity matrices, and let $D^{+-}$ and $D^{-+}$ be nonnegative matrices, in such a way that
\[\begin{pmatrix}
C^+&D^{+-}\\ 
D^{-+}& C^-
\end{pmatrix}\]
corresponds to the intensity matrix of a Markov jump process. For $k\in\{+,-\}$ choose
\[\mathfrak{Z}^k=\{\bm{x}\in\mathds{R}^{m^k}: \bm{x}\ge \bm{0}, \bm{x}\bm{1}=1\}.\]
Note that for all $\bm{b}\in\mathfrak{Z}^k$ and $h\ge 0$, ${\bm{b}e^{C^kh}}/{\bm{b}e^{C^kh}\bm{1}}$ corresponds to a normalized probability vector, and thus belongs to $\mathfrak{Z}^k$. The nonnegativity of $D^{+-}$ and $D^{-+}$ implies that the intensity function $\lambda$ is always nonnegative. Finally, if {$k\ne\ell\in\{+,-\}$} and $\bm{b}\in\mathfrak{Z}^k$, then $\bm{b}D^{k\ell}/{\bm{b}D^{k\ell}\bm{1}}$ corresponds to an $m^\ell$--dimensional normalized probability vector, and thus belongs to $\mathfrak{Z}^\ell$. Thus, an orbit process defined by these parameters must be valid.
\end{Example}
\begin{Example}[Matrix-exponential renewal process] \label{ex:ME1}For $k\in\{+,-\}$, let $(\bm{\alpha}^k,S^k)$ be such that $\bm{\alpha}^k\bm{1} = 1 $ and $F^k(x):=1-\bm{\alpha^k}e^{S^kx}\bm{1}$ is a distribution function. Such a class of distributions is commonly known as matrix-exponential, a generalization of phase--type distributions. For $\ell\neq k$, let 
\[C^k=S^k\quad \mbox{and}\quad D^{k\ell} = (-\bm{S}^k\bm{1})\bm{\alpha}^\ell,\]
and define
\[\mathfrak{Z}^k=\left\{\frac{\bm{\alpha}^ke^{C^kh}}{\bm{\alpha^k}e^{C^kh}\bm{1}}: h\ge 0\right\}.\]
If $\bm{b}\in \mathfrak{Z}^k$, then $\bm{b}={\bm{\alpha}^ke^{C^kh}}/{\bm{\alpha^k}e^{C^kh}\bm{1}}$ for some $h\ge 0$ and thus
\begin{align}
	\label{eq:lambdaaux4}
	\lambda(\bm{b}) = \bm{b}D^{k\ell}\bm{1} = \frac{\bm{\alpha}^ke^{S^kh}}{\bm{\alpha^k}e^{S^kh}\bm{1}}\left((-\bm{S}^k\bm{1})\bm{\alpha}^\ell\right)\bm{1}=\frac{\bm{\alpha}^ke^{S^kh}(-\bm{S}^k\bm{1})}{\bm{\alpha^k}e^{S^kh}\bm{1}}.
	\end{align}
Since the numerator in the last expression of (\ref{eq:lambdaaux4}) corresponds to a probability density function and its denominator to a tail distribution, $\lambda$ is indeed a nonnegative intensity function. Moreover, if a jump happens from some $\bm{b}\in\mathfrak{Z}^k$, it will land in 
\[\frac{\bm{b}D^{k\ell}}{\bm{b}D^{k\ell}\bm{1}}=\frac{\bm{b}(-\bm{S}^k\bm{1})\bm{\alpha}^\ell}{\bm{b}(-\bm{S}^k\bm{1})\bm{\alpha}^\ell\bm{1}}=\frac{\bm{\alpha}^\ell}{\bm{\alpha}^\ell\bm{1}}=\bm{\alpha}^\ell\in\mathfrak{Z}^\ell,\quad \ell\neq k.\]
Thus, this set of parameters corresponds to a valid orbit process whose {inter}-jump times are matrix-exponential.
\end{Example}

Now that the distributional characteristics of the orbit process have been completely described, we state two further conditions on $\orbit$ and its state space. 
\begin{Condition}
	\label{cond:boundedness} 
	The sets $\mathfrak{Z}^+$ and $\mathfrak{Z}^-$ are bounded. 
\end{Condition} 

In the RAP setting of \cite{asm:99}, Condition~\ref{cond:boundedness} is needed for $\boldsymbol{\mathcal{A}}$ to correspond to the coefficients of a linear combination of probability measures that is itself a probability measure. In our setting, it will enable us to use the Bounded Convergence Theorem in specific places and to guarantee that, almost surely, there are only finitely many jumps on compact time intervals.

\begin{Condition}
	\label{cond:minimal} 
	For $k\in\{+,-\}$, the set $\mathfrak{Z}^k$ is contained in a \emph{minimal} $(m^k-1)$-dimensional affine hyperplane, meaning that $\mathfrak{Z}^k$ cannot be contained in any $(m^k -2)$--dimensional affine hyperplane. %
\end{Condition} 

In the context of Example \ref{ex:MJP1}, Condition~\ref{cond:minimal} is equivalent to an irreducibility assumption of the Markov jump process, while in the context of Example \ref{ex:ME1} it is linked to the minimal dimension property of matrix-exponential distributions \cite{bladt2017matrix}. In our context, Condition~\ref{cond:minimal} tells us that $\mathfrak{Z}^k$ is \emph{rich enough} to guarantee a one-to-one correspondence between a matrix $G$ and the collection $\{\bm{b}G: \bm{b}\in \mathfrak{Z}^+\}$; this property follows from Lemma \ref{lem:minimality} below.

\begin{Lemma}\label{lem:minimality}
For $k\in\{+,-\}$, there exist $m^k$ linearly independent vectors contained in $\mathfrak{Z}^k$.
\end{Lemma}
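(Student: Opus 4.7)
The plan is straightforward and hinges on one observation: the affine hyperplane $\{\bm{x}\in\mathds{R}^{m^k}:\bm{x}\bm{1}=1\}$ does not pass through the origin, so affine independence inside it upgrades to linear independence in $\mathds{R}^{m^k}$.

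First I would unpack Condition~\ref{cond:minimal} as a statement about affine hulls. Since $\mathfrak{Z}^k$ lies in an $(m^k-1)$-dimensional affine hyperplane and in no smaller affine hyperplane, the affine hull of $\mathfrak{Z}^k$ has dimension exactly $m^k-1$. By the standard characterization of affine dimension, this guarantees the existence of $m^k$ points $\bm{a}_1,\ldots,\bm{a}_{m^k}\in\mathfrak{Z}^k$ that are \emph{affinely independent}, i.e., the $m^k-1$ difference vectors $\bm{a}_2-\bm{a}_1,\ldots,\bm{a}_{m^k}-\bm{a}_1$ are linearly independent in $\mathds{R}^{m^k}$.

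Next I would promote affine independence to linear independence by exploiting $\bm{a}_i\bm{1}=1$. Suppose $\sum_{i=1}^{m^k}c_i\bm{a}_i=\bm{0}$. Right-multiplying by $\bm{1}$ gives $\sum_i c_i=0$, so in particular $c_{m^k}=-\sum_{i<m^k}c_i$. Substituting this back into the original relation yields
\[
\sum_{i=1}^{m^k-1} c_i(\bm{a}_i-\bm{a}_{m^k})=\bm{0},
\]
and the linear independence of the difference vectors forces $c_1=\cdots=c_{m^k-1}=0$, hence $c_{m^k}=0$ as well. Therefore $\bm{a}_1,\ldots,\bm{a}_{m^k}$ are linearly independent.

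I do not anticipate any real obstacle here: the argument is essentially elementary linear algebra. The only place where care is needed is interpreting Condition~\ref{cond:minimal} correctly — namely, inferring from the \emph{minimality} of the ambient $(m^k-1)$-dimensional affine hyperplane that the affine hull of $\mathfrak{Z}^k$ attains dimension $m^k-1$, and thus contains $m^k$ affinely independent points. Once that is in hand, the hyperplane $\bm{x}\bm{1}=1$ avoiding the origin does the rest of the work.
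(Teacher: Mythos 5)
Your proof is correct and uses essentially the same idea as the paper's: take a base point and $m^k-1$ points of $\mathfrak{Z}^k$ whose differences from it span the direction space (the paper writes these as $\bm{g}$ and $\bm{g}+\bm{b}_i$, you phrase it as $m^k$ affinely independent points), then observe that because the hyperplane $\{\bm{x}\bm{1}=1\}$ avoids the origin, affine independence upgrades to linear independence. Your write-up makes the affine-hull-dimension step and the ``promote to linear independence'' step more explicit, but the underlying argument is the same.
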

\begin{proof}
Since $\mathfrak{Z}^k$ is contained in a minimal $(m^k -1)$-dimensional affine hyperplane, there exist linearly independent vectors $\bm{b}_1, \dots, \bm{b}_{m^k-1}$ and $\bm{g}\in \mathfrak{Z}^k$ such that $\{\bm{b}_i\}$ are linearly independent from $\bm{g}$ with
\[\{\bm{b}_1, \dots, \bm{b}_{m^k-1}\} + \bm{g} \subset\mathfrak{Z}^+\subset\mbox{span}\{\bm{b}_1, \dots, \bm{b}_{m^k-1}\} + \bm{g}.\]
Thus, let $\bm{h}_{m^k}= \bm{g}$ and $\bm{h}_i= \bm{b}_i + \bm{g}$ for $ i\in\{1,\dots,m^k-1\}$. Then $\{\bm{h}_i\}\subset\mathfrak{Z}^k$ is a collection of $m^k$ linearly independent vectors.
\end{proof}
\begin{definition} 
	Let $\nu^+ \in (0, \infty)$, $\nu^- \in (-\infty, 0)$. A \emph{simple RAP-modulated fluid process} is a Markov additive process $\{(R_t, \bm{A}_t)\}_{t\ge 0}$ with an orbit process $\orbit=\{\bm{A}_t\}_{t\ge 0}$ and additive component $\mathcal{R}=\{R_t\}_{t\ge 0}$ of the form
	\begin{align} 
	R_t=\int_0^t {\nu}^+\Indic{\bm{A}_s\in\mathfrak{Z}^+} + {\nu}^-\Indic{\bm{A}_s\in\mathfrak{Z}^-} \dd s.
	\end{align} 
  We refer to $\mathcal{R}$ as the \emph{level process}.
\end{definition} 

In other words, at time $t$ the process $\mathcal{R}$ is either increasing at rate $\nu^+$ or decreasing at rate $\nu^-$, depending on the location of $\bm{A}_t$. The term \emph{simple} stems from the facts that $\mathcal{R}$ is nowhere piecewise constant, and that $\orbit$ is allowed to perform jumps from $\mathfrak{Z}^+$ to $\mathfrak{Z}^-$ or vice versa only. We relax these assumptions in Section \ref{sec:generalmodel}. See Figure \ref{fig:simpleFRAP} for a visual description of the simple RAP-modulated fluid process.
 
\begin{figure}[h]
\centering
\begin{tikzpicture}[scale=0.63]
\node  (0) at (-3.25, 1) {};
    \node[circle, fill=green, very thick, minimum size=7mm]  (1) at (-1.75, -1.75) {};
    \node  (2) at (-2, 0.25) {};
    \node  (3) at (-2.75, 0.5) {};
    \node  (5) at (-3, 2.75) {};
    \node  (6) at (-1.75, -3) {};

        \filldraw [color=gray!10, in=-165, out=-165, looseness=1.25] (5.center) to (6.center);
    \filldraw [color=gray!10, bend left=90, looseness=1.25] (5.center) to (6.center);
        \draw [->, very thick, color=blue, in=75, out=105, looseness=2.00] (2.center) to (3.center);
    \draw [->, very thick, color=blue, in=-135, out=-120, looseness=1.00] (0.center) to (1.center);
        \node  (4) at (-2.5, 3.5) {$\mathfrak{Z}^+$};
        \node  (A0) at (-3.25, 1.35) {\footnotesize$\bm{A}_{0}$};
        \node  (A2) at (-1.85, -0.07) {\footnotesize$\bm{A}_{t_2}$};
        \node  (0d) at (5, 2.75) {};
    \node (1d) at (2, -3) {};
    \node  (2d) at (2, -0.5) {};
    \node (3d) at (4.75, -0.25) {};
    \node  (4d) at (4, -0.5) {};
    \node  (5d) at (3.5, 0.75) {};
    \filldraw [color=gray!10, bend left=90, looseness=1.00] (0d.center) to (1d.center);
    \filldraw [color=gray!10, bend right=90, looseness=1.00] (0d.center) to (1d.center);
    \draw [<-, very thick, color=red, in=-105, out=-105, looseness=2.50] (2d.center) to (3d.center);
    \draw [->, very thick, color=red, in=60, out=45, looseness=2.75] (5d.center) to (4d.center);
    \node  (6d) at (4.2, 3.5) {$\mathfrak{Z}^-$};
        \node (A3) at (5, 0.05) {\footnotesize$\bm{A}_{t_3}$};
    \node  (A1) at (3.3, 0.45) {\footnotesize$\bm{A}_{t_1}$};
   
\end{tikzpicture}
\qquad
\begin{tikzpicture}[scale=0.8]
    \node  (0) at (0, 5) {};
    \node  (1) at (0, 0) {};
    \node (2) at (4.75, 0) {};
    \node (3) at (2, 4) {};
    \node (4) at (2.75, 2.5) {};
    \node (5) at (3.25, 3.5) {};
    \node (6) at (4.75, 0.5) {};
    \node (7) at (4.95, 0) {$t$};
    \node (8) at (0, 5.4) {$R_t$};
    \node (t1p) at (2,0) {};
    \node (t2p) at (2.75,0) {};
    \node (t3p) at (3.25,0) {};
    \node (t0) at (0,-0.35) {$0$};
    \node (t1) at (2,-0.35) {$t_1$};
    \node (t2) at (2.75,-0.35) {$t_2$};
    \node (t3) at (3.25,-0.35) {$t_3$};
        \draw [<-](0.center) to (1.center);
    \draw [->](1.center) to (2.center);
    \draw [thick, color=blue](1.center) to (3.center);
    \draw [thick, color=red] (3.center) to (4.center);
    \draw [thick, color=blue] (4.center) to (5.center);
    \draw [thick, color=red](5.center) to (6.center);
    \draw [thin, dashed](3.center) to (t1p.center);
    \draw [thin, dashed](4.center) to (t2p.center);
    \draw [thin, dashed](5.center) to (t3p.center);

\end{tikzpicture}
\caption{A sample path of the elements of the simple RAP-modulated fluid process $\{(R_t, \bm{A}_t)\}_{t\ge 0}$. The times $t_1, t_2, t_3$ correspond to jump epochs of the orbit process $\bm{\mathcal{A}}$. In between jumps, $\{\bm{A}_t\}_{t\ge 0}$ evolves deterministically, switching between states in $\mathfrak{Z}^+$ and $\mathfrak{Z}^-$ at $t_1, t_2$ and $t_3$.}
\label{fig:simpleFRAP}
\end{figure}
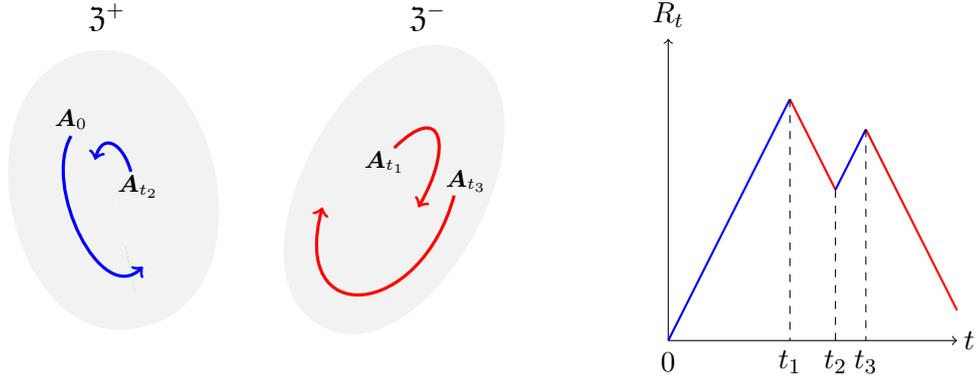
%
%

\begin{Condition} 
For $k\in \{+, -\}$, 
$ 
\lim_{t\rightarrow\infty}\mathds{P}\left(\left.\bm{A}_s\in \mathfrak{Z}^{k} \;\forall s\in [0,t]\;\right| \bm{A}_0 =\bm{\alpha}\right) = 0$  for all $\bm{\alpha}\in\mathfrak{Z}^{k}.
$
\end{Condition} 
This implies that neither the orbit $\orbit$ nor the level $\mathcal{R}$ are deterministic and thus trivial.
\section{RAP-modulated fluid process}
	\label{sec:generalmodel}
	\subsection{Definition} 
%
In the following we introduce the RAP-modulated fluid process, which allows for greater generality than the simple RAP-modulated fluid process: in addition to allowing $\{R_t\}_{t\ge 0}$ to be piecewise constant, the orbit $\orbit$ is also allowed to perform jumps within each set $\mathfrak{Z}^k$, $k\in\{+,-,0\}$, via a specific set-partitioning explained next.

Denote $\mathcal{S}=\{+,-,0\}$ and fix $k\in\mathcal{S}$. We suppose that the set $\mathfrak{Z}^k$ is contained in a collection of $n^k\ge 0$ orthogonal affine hyperplanes. Specifically, we assume that the set $\mathfrak{Z}^k$ can be partitioned in sets, say $\{\mathfrak{Z}^k_i\}_{i=1}^{n^k}$, with the following property: \magenta{T}here exists a collection $\{m^k_i\}_{i=1}^{n^k}\subset\{1,2,\dots\}$ such that each set $\mathfrak{Z}^k_i$ is contained in the affine hyperplane
\begin{align}
	\label{eq:affhyp1}
	\left\{\boldsymbol{x}\in\mathds{R}^{\eta^k}: \boldsymbol{x}=(\boldsymbol{0}_{1}^k,\dots, \boldsymbol{0}_{i-1}^k, \boldsymbol{y},\boldsymbol{0}_{i+1}^k,\dots,\boldsymbol{0}_{{n^k}}^k) \mbox{ for some }\boldsymbol{y}\in\mathds{R}^{m^k_i}\mbox{ with }\boldsymbol{y}\bm{1}=1\right\},\end{align}
where $\bm{0}_i^k$ denotes the row-vector of zeros with $m^k_i$ elements and $\eta^k:=\sum_{i=1}^{n^k}m^k_i$ corresponds to the dimension of the space in which $\mathfrak{Z}^k$ lives.

The \emph{(general) orbit process} $\orbit$ is a c\`adl\`ag PDMP with state space $\mathfrak{Z} = \cup_{k\in\mathcal{S}}\mathfrak{Z}^k$ and some arbitrary but fixed initial state $\bm{A}_0$. We describe its PDMP characteristics next. If $\orbit$ has no jumps in $[t,t+h)$ for some $h>0$, then $\orbit$ follows the ODE
\begin{align}\label{eq:ODEFRAP1}
\frac{\dd \bm{A}_s}{\dd s}= \bm{A}_s\Gamma^k - (\bm{A}_s\Gamma^k\bm{1})\bm{A}_s\quad\mbox{for } \bm{A}_s\in\mathfrak{Z}^k, s\in[t,t+h],
\end{align}
where $\Gamma^k \in\mathds{R}^{\eta^k\times{\eta^k}}$ is of the form
\[\Gamma^k
	=\begin{pmatrix}
	C^k_{11}&&&\\
	&C^k_{22}&&\\
	&&\ddots&\\
	&&&&C^k_{n^kn^k}
\end{pmatrix}
\]
%
for some $C^k_{ii}\in\mathds{R}^{m^k_i\times m^k_i}$, $1\le i\le n^k$. The block-diagonal structure of $\Gamma^k$ guarantees that, if $\bm{A}_t\in\mathfrak{Z}^k_{i_0}$ for some $1\le i_0\le n^k$ and $\orbit$ has no jumps in $[t,t+h]$, then $\{\bm{A}_s\}_{s=t}^{t+h}$ is contained in the affine hyperplane (\ref{eq:affhyp1}) with $i=i_0$. 
One can verify that the solution of (\ref{eq:ODEFRAP1}) is given by
\begin{align}
  \label{eq:Atnojumpsh2}
  \bm{A}_{t+r}=\frac{\bm{A}_t e^{\Gamma^k r}}{\bm{A}_t e^{\Gamma^k r}\bm{1}}\in\mathfrak{Z}^k\quad\mbox{if } \bm{A}_{t}\in\mathfrak{Z}^k, r\in[0,h].
\end{align}

If $\boldsymbol{A}_t \in \mathfrak{Z}^k_i$, then a jump to $\mathfrak{Z}^k_j$ with $j\neq i$ occurs with intensity $\boldsymbol{A}_t \widehat{C}^k_{ij}\bm{1}\ge 0$ with
%
%
\[\widehat{C}^k_{ij}:=\begin{pmatrix}0^k_{1,1}&\cdots&0^k_{1,j-1}& &  & & \\
\vdots&&\vdots& && & \\
0^k_{i-1,1}&\cdots&0^k_{i-1,j-1}& && & \\
 & & &C^k_{ij}& &  & \\
&&& &0^k_{i+1,j+1}&\cdots & 0^k_{i+1,n^k}\\
&&&& \vdots & & \vdots\\
&&&&0^k_{n^k,j+1}&\cdots & 0^k_{n^k,n^k}
\end{pmatrix}\]
for some $C^k_{ij}\in\mathds{R}^{m^k_i\times m^k_j}$, where $0^k_{a,b}$ denotes the zero matrix in $\mathds{R}^{m^k_a\times m^k_b}$. {If such a jump to $\mathfrak{Z}^k_j$ occurs at some time $s>t$, it will land at $\bm{A}_s=(\bm{A}_{s^-}\widehat{C}_{ij}^k)/(\bm{A}_{s^-}\widehat{C}_{ij}^k\bm{1})\in\mathfrak{Z}^k_j$.} 

Similarly, if $\bm{A}_t \in \mathfrak{Z}^k_i$, then a jump to $\mathfrak{Z}^\ell_j$ with $\ell\neq k, j\le n^\ell$, occurs with intensity $\boldsymbol{A}_t \widehat{D}^{k\ell}_{ij}\bm{1}\ge 0$ where
%
%
\[\widehat{D}^{k\ell}_{ij}=\begin{pmatrix}0^{k\ell}_{1,1}&\cdots&0^{k\ell}_{1,j-1}& & & & \\
\vdots&&\vdots&&& & \\
0^{k\ell}_{i-1,1}&\cdots&0^{k\ell}_{i-1,j-1}& & &  & \\
& & &D^{k\ell}_{ij}& &  & \\
& &&&0^{k\ell}_{i+1,j+1}&\cdots & 0^{k\ell}_{i+1,n^k}\\
 & & & &\vdots&  & \vdots\\
 & & &&0^{k\ell}_{n^k,j+1}&\cdots & 0^{k\ell}_{n^k,n^k}
\end{pmatrix}\]
for some $D^{k\ell}_{ij}\in\mathds{R}^{m^k_i\times m^\ell_j}$. 
{If such a jump 
occurs at some time $s>t$, it will land at $\bm{A}_s=(\bm{A}_{s^-}\widehat{D}_{ij}^{k\ell})/(\bm{A}_{s^-}\widehat{D}_{ij}^{k\ell}\bm{1}) \in\mathfrak{Z}^\ell_j$.}
%

\begin{Condition} 
\begin{align}
	\label{eq:zerosum1}\Gamma^k\bm{1} +\sum_{i=1}^{n^k} \sum_{j\neq i}\widehat{C}^k_{ij}\bm{1} + \sum_{\ell\neq k}\sum_{i=1}^{n^\ell} \sum_{j\neq i}\widehat{D}^{k\ell}_{ij}\bm{1}=\bm{0}.\end{align}
\end{Condition} 
Since the jump intensities of a PDMP are additive, (\ref{eq:zerosum1}) implies that the jump intensity function of $\orbit$ is given by $\lambda:\cup_{k\in\mathcal{S}}\mathfrak{Z}^k \mapsto \mathds{R}_+$ of the form
\begin{align}\label{eq:lambda2}\lambda(\bm{A}_t)=\bm{A}_t\left(\sum_{i=1}^{n^k} \sum_{j\neq i}\widehat{C}^k_{ij}\bm{1} + \sum_{\ell\neq k}\sum_{i=1}^{n^\ell} \sum_{j\neq i}\widehat{D}^{k\ell}_{ij}\bm{1}\right)=-\bm{A}_t\Gamma^k\bm{1}\quad\mbox{for}\quad\bm{A}_t\in\mathfrak{Z}^k.\end{align}
Analogous to (\ref{eq:nojumps6}), Equations (\ref{eq:Atnojumpsh2}) and (\ref{eq:lambda2}) imply that
\begin{align}
	\label{eq:genorbitnojumps1}
	\mathds{P}\left(\left.\orbit\mbox{ has no jumps in }[t,t+h]\;\right| \bm{A}_t,\bm{A}_t\in\mathfrak{Z}^k_i\right) = \bm{A}_t e^{\Gamma^k h}\bm{1}.
\end{align}
As in Section \ref{sec:proto-FRAP}, note that the distributional properties of $\orbit$ are completely determined by 
$\{C^k_{ij}: k\in\mathcal{S}, i\le n^k, j\le n^k\}$ and $\{D^{k\ell}_{ih}: k\in\mathcal{S}, \ell\neq k, i\le n^k, h\le n^\ell\}.$ 
Similarly, here it is also difficult to assess if a collection of matrices is compatible with a given state space $\cup_{k\in\mathcal{S}}\cup_{i\le n^k}\mathfrak{Z}^k_i$, nonetheless, the following is an example of a valid orbit process.
\begin{Example}[Markov renewal matrix-exponential process] For $k,\ell\in\mathcal{S}$, let $P^{k\ell}=\{p^{k\ell}_{ij}\}_{ij}\in\mathds{R}^{n^k\times n^\ell}$ be such that
\[
	\begin{pmatrix}P^{++}&P^{+-}&P^{+0}\\P^{-+}&P^{--}&P^{-0}\\P^{0+}&P^{0-}&P^{00}
\end{pmatrix}
\]
is a transition probability matrix. For $k\in\mathcal{S}$ let $\{(\bm{\alpha}^k_i, S^k_i)\}_{i=1}^{n^k}$ be a collection of matrix-exponential parameters. In a similar fashion to Example \ref{ex:ME1}, one can verify that taking
\[
	C^k_{ii} := S^k_i,\quad C^k_{ij}:=p^{kk}_{ij}\left(-S^k_i\bm{1}\right)\bm{\alpha}^k_j,\quad D^{k\ell}_{ih}:=p^{k\ell}_{ih}\left(-S^k_i\bm{1}\right)\bm{\alpha}^\ell_h\quad\mbox{for}\quad\ell\neq k, {i\neq j\le n^k}, h\le n^\ell,
	\]
and defining
\[
	\mathfrak{Z}^k_i := \left\{\boldsymbol{x}\in\mathds{R}^{\eta^k}: \boldsymbol{x}=\left(\boldsymbol{0}_{1}^k,\dots, \boldsymbol{0}_{i-1}^k, \frac{\bm{\alpha}^ke^{C^k_{ii}h}}{\bm{\alpha^k}e^{C^k_{ii}h}\bm{1}},\boldsymbol{0}_{i+1}^k,\dots,\boldsymbol{0}_{{n^k}}^k\right) \mbox{ for some }h\ge 0 \right\}
	\]
yields a valid orbit $\orbit$ driven by a Markov renewal process with matrix-exponential jump times.
\end{Example}

\begin{Condition} 
For each $k\in\{+,-,0\}$ and $i\le n^k$,  the set $\mathfrak{Z}^k_i$ is bounded and is contained in a minimal $(m^k_i-1)$-dimensional affine hyperplane.
\end{Condition} 
The above condition 
implies the following.
 
\begin{Lemma}
	\label{lem:bminimality} 
	For $k\in\{+,-,0\}$, the set $\mathfrak{Z}^k$ contains $\eta^k$ linearly independent vectors.
\end{Lemma}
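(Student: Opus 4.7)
The plan is to reduce the statement to Lemma \ref{lem:minimality} applied separately to each block $\mathfrak{Z}^k_i$, and then stitch the resulting bases together using the fact that the affine hyperplanes in (\ref{eq:affhyp1}) are supported on disjoint coordinate blocks.

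First I would fix $k\in\{+,-,0\}$ and apply the argument in the proof of Lemma \ref{lem:minimality} to each piece $\mathfrak{Z}^k_i$ individually. Since by Condition on minimality each $\mathfrak{Z}^k_i$ is contained in a minimal $(m^k_i-1)$-dimensional affine hyperplane, repeating verbatim the reasoning of Lemma \ref{lem:minimality} (choose $m^k_i-1$ linearly independent directions inside the hyperplane plus an affine offset $\bm{g}^{(i)}\in\mathfrak{Z}^k_i$, then translate) produces $m^k_i$ linearly independent vectors $\bm{h}^{(i)}_1,\dots,\bm{h}^{(i)}_{m^k_i}\in\mathfrak{Z}^k_i \subset \mathds{R}^{\eta^k}$.

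Next I would exploit the block structure. By (\ref{eq:affhyp1}), every element of $\mathfrak{Z}^k_i$ has the form $(\bm{0}^k_1,\dots,\bm{0}^k_{i-1},\bm{y},\bm{0}^k_{i+1},\dots,\bm{0}^k_{n^k})$ with $\bm{y}\in\mathds{R}^{m^k_i}$, so the nonzero entries of each $\bm{h}^{(i)}_s$ are confined to the $i$-th coordinate block. Consequently, for any scalars $\{c^{(i)}_s\}$ satisfying $\sum_{i=1}^{n^k}\sum_{s=1}^{m^k_i} c^{(i)}_s \bm{h}^{(i)}_s = \bm{0}$, the restriction to the $i$-th block gives $\sum_{s=1}^{m^k_i} c^{(i)}_s \bm{y}^{(i)}_s = \bm{0}$, where $\bm{y}^{(i)}_s\in\mathds{R}^{m^k_i}$ is the nonzero block of $\bm{h}^{(i)}_s$. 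Since the $\bm{h}^{(i)}_s$ (and hence the $\bm{y}^{(i)}_s$) are linearly independent within block $i$, all $c^{(i)}_s$ must vanish. Thus the collection $\{\bm{h}^{(i)}_s : 1\le i\le n^k, 1\le s\le m^k_i\}\subset\mathfrak{Z}^k$ is linearly independent and has cardinality $\sum_{i=1}^{n^k}m^k_i=\eta^k$.

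The hard part, such as it is, lies in the intra-block step (Lemma \ref{lem:minimality}), which is already available. Once that is in hand, the inter-block argument is essentially bookkeeping: the disjointness of the supports of the affine hyperplanes in (\ref{eq:affhyp1}) makes independence across blocks automatic, so no additional geometric input is needed.
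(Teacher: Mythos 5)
Your proof is correct, and it is the argument the paper implicitly has in mind: Lemma \ref{lem:bminimality} is stated without proof, as the natural blockwise extension of Lemma \ref{lem:minimality}, and that is exactly what you supply. The minimality condition on each $\mathfrak{Z}^k_i$ yields $m^k_i$ linearly independent vectors supported in the $i$-th coordinate block via the argument of Lemma \ref{lem:minimality}, and the disjoint supports enforced by (\ref{eq:affhyp1}) make the union of all $\sum_i m^k_i = \eta^k$ such vectors linearly independent in $\mathds{R}^{\eta^k}$.
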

\begin{definition}
Let $\{\nu^+_j\}_{j=1}^{n^+}\subset(0,\infty)$ and $\{\nu^-_j\}_{j=1}^{n^-}\subset(-\infty,0)$. We define a \emph{RAP-modulated fluid process} to be a Markov additive process  $(\mathcal{R},\orbit)=\{(R_t, \bm{A}_t)\}_{t\ge 0}$ with $\orbit$ a \emph{(general) orbit process} and $\mathcal{R}$ a \emph{(general) level process} of the form
\begin{align}
	R_t := \int_0^t \left(\sum_{j=1}^{n^+} \nu^+_j \Indic{\bm{A}_s\in\mathfrak{Z}^+_j} + \sum_{j=1}^{n^-} \nu^-_j \Indic{\bm{A}_s\in\mathfrak{Z}^-_j}\right) \dd s.
\end{align} 
\end{definition}
See Figure \ref{fig:multijumpFRAP} for a visual description. 

\begin{figure}[h]
\centering
\begin{tikzpicture}[scale=1.5]
\node  (0) at (0, 0) {};
    \node  (1) at (6.75, 0) {};
    \node  (2) at (0, 3.25) {};
    \node  (3) at (0.25, 0.5) {};
    \node  (4) at (0.75, 1.5) {};
    \node  (5) at (1.25, 1.5) {};
    \node  (6) at (1.5, 1.5) {};
    \node  (7) at (2, 2.5) {};
    \node  (8) at (2.5, 2.5) {};
    \node  (9) at (2.875, 1.75) {};
    \node  (10) at (3.25, 1) {};
    \node  (11) at (4, 1) {};
    \node  (12) at (5, 3) {};
    \node  (13) at (5.5, 2) {};
    \node  (14) at (6, 2) {};
    \node  (15) at (6.75, 0.5) {};
    \node  (16) at (4.8, 2.6) {};
    \node  (17) at (6.75, -0.25) {$t$};
    \node  (18) at (-0.25, 3.25) {$R_t$};
    \draw [->](0.center) to (2.center);
    \draw [->](0.center) to (1.center);
    \draw [thick, color=blue] (0.center) to (3.center);
    \draw [thick, color=blue, dashed] (3.center) to (4.center);
    \draw [thick, dashed](4.center) to (5.center);
    \draw [thick](5.center) to (6.center);
    \draw [thick, color=blue, dashed](6.center) to (7.center);
    \draw [thick](7.center) to (8.center);
    \draw [thick, color=red](8.center) to (9.center);
    \draw [thick, color=red, dashed](9.center) to (10.center);
    \draw [thick, dashed](10.center) to (11.center);
        \draw [thick, color=blue](11.center) to (16.center);
    \draw [thick, color=blue, dashed](16.center) to (12.center);
    \draw [thick, color=red, dashed](12.center) to (13.center);
    \draw [thick](13.center) to (14.center);
    \draw [thick, color=red, dashed](14.center) to (15.center);
\end{tikzpicture}
\caption{A sample path of the process $\{R_t\}_{t\ge 0}$ whose orbit process $\{\bm{A}_t\}_{t\ge 0}$ has state space $\mathfrak{Z}$, with $\mathfrak{Z}^+=\typeblue{\mathfrak{Z}^+_{\mbox{solid}}}\cup\typeblue{\mathfrak{Z}^+_{\mbox{dashed}}}$, $\mathfrak{Z}^-=\typered{\mathfrak{Z}^-_{\mbox{solid}}}\cup\typered{\mathfrak{Z}^-_{\mbox{dashed}}}$ and $\mathfrak{Z}^0={\mathfrak{Z}^0_{\mbox{solid}}}\cup{\mathfrak{Z}^0_{\mbox{dashed}}}$.}
\label{fig:multijumpFRAP}
\end{figure}

\begin{assumption} 
	\label{assump:pm1} 
Henceforth, $\nu^+_i=1$ for  $i\in\{1,\dots,n^+\}$ and $\nu^-_j=-1$ for  $j\in\{1,\dots,n^-\}$. 
\end{assumption} 

Under this assumption, $\{R_t\}_{t\ge 0}$ reduces to
\begin{align}
	\label{eq:p1m1additive1}
	R_t=\int_0^t \Indic{\bm{A}_s\in\mathfrak{Z}^+} -\Indic{\bm{A}_s\in\mathfrak{Z}^-} \dd s,\quad t\ge 0.
	\end{align}
This greatly simplifies the analysis of the RAP-modulated fluid process; results from the general case can be recovered through standard time-change techniques \cite{Rogers:1994uoa}.


As in Section~\ref{sec:proto-FRAP}, in order to avoid trivial paths we impose the following. 

\begin{Condition} 
For any $k\in\{+,-\}$,
\begin{align}
	\label{eq:jumpshappen1}
\lim_{t\rightarrow\infty}\mathds{P}\left(\left.\bm{A}_s\in \mathfrak{Z}^k\cup\mathfrak{Z}^0 \;\forall s\in[0,t]\;\right| \bm{A}_0=\bm{\alpha}\right) = 0 \quad \mbox{for all } \bm{\alpha}  \in\mathfrak{Z}^k\cup\mathfrak{Z}^0.
\end{align}
\end{Condition} 
%

\subsection{Preliminaries}
{Denote by $\mathds{P}_{\bm{\alpha}}$ the probability law of $\mathcal{X}=\{(R_t,\bm{A}_t)\}_{t\ge 0}$ conditioned on the event $\{\bm{A}_0=\bm{\alpha}\}$, and denote by $\mathds{E}_{\bm{\alpha}}$ its associated expectation. Let $(\Omega^*,\mathcal{F},\{\mathcal{F}_t\}_{t\ge 0}, \{\mathds{P}_{\bm{\alpha}}\}_{\bm{\alpha}\in \mathfrak{Z}})$ be the canonical probability space associated to the Markov process $\{(R_t,\bm{A}_t)\}_{t\ge 0}$, and w.l.o.g. assume that $\{(R_t,\bm{A}_t)\}_{t\ge 0}$ is defined there. More specifically, for $\omega=\{(r_t,\bm{a}_t)\}_{t\ge 0}\in\Omega^*$,
\begin{equation}\label{eq:shiftX1} R_t(\omega)=r_t\quad\mbox{and} \quad \bm{A}_t(\omega)=\bm{a}_t\qquad\mbox{for all}\quad t\ge 0.\end{equation}
Heuristically speaking, each $\{(r_t,\bm{a}_t)\}_{t\ge 0}\in\Omega^*$ corresponds to a feasible \emph{path} of $\{(R_t,\bm{A}_t)\}_{t\ge 0}$.}

%
{Let $\Omega$ be the set of paths $\{(r_t,\bm{a}_t)\}_{t\ge 0}\in\Omega^*$ such that:}
\begin{itemize}
  \item $\{\bm{a}_t\}_{t\ge 0}$ has a finite number of jumps on each compact time interval,
  \item for all $s\ge 0$, neither $\bm{a}_t\in\mathfrak{Z}^+\cup \mathfrak{Z}^0$ for all $t\ge s$, nor $\bm{a}_t\in\mathfrak{Z}^-\cup \mathfrak{Z}^0$ for all $t\ge s$,
  \item there are no $s, t\ge 0$, $s\neq t$ such that $r_s=r_t$, $\bm{a}_{s^-}\in\mathfrak{Z}^{k}$, $\bm{a}_{s}\neq \bm{a}_{s^-}$, $\bm{a}_{t^-}\in\mathfrak{Z}^{\ell}$, $\bm{a}_{t}\neq \bm{a}_{t^-}$, for $k, \ell\in\{+,-\}$; in other words, no two jumps of $\{\bm{a}_t\}_{t\ge 0}$ happen while at the same level of $\{r_t\}_{t\ge 0}$.
\end{itemize}
\begin{remark} 
The paths in $\Omega$ are those that are regarded as \emph{nice}, and are the ones that will be considered throughout all the arguments in this paper. Note that the event $\{\mathcal{X}\in\Omega^*\setminus\Omega\}$ is $\mathds{P}_{\bm{\alpha}}$-null for all $\bm{\alpha}\in\mathfrak{Z}$; since the $\sigma$-algebra $\mathcal{F}$ is assumed to be complete, then the event $\{\mathcal{X}\in\Omega\}$ is an element of $\mathcal{F}$. For this reason, from here on we restrict the sample space of $\mathcal{X}$ to $\Omega$ and, with a slight abuse of notation, refer to its probability measure $\mathds{P}_{\bm{\alpha}}|_{\{\mathcal{X}\in\Omega\}}$ as $\mathds{P}_{\bm{\alpha}}$.
\end{remark} 

For any $s\ge 0$, define the \emph{shift operator} {$\theta_s:\Omega \mapsto \Omega$}
by
\begin{align*} 
	\theta_s\{(r_t,\bm{a}_t)\}_{t\ge 0} := \{(r_{t+s}-r_s, \bm{a}_{t+s})\}_{t\ge 0}.
\end{align*} 
%
{In particular, according to (\ref{eq:shiftX1})
\[R_t\circ\theta_s=R_{t+s}-R_s\quad\mbox{and}\quad\bm{A}_t\circ\theta_s=\bm{A}_{t+s}\qquad\mbox{for all}\quad t\ge 0,s\ge 0.\]}
%
%

Now, let $Z$ be an $\mathcal{F}_t$-stopping time. By the strong Markov property of PDMPs \cite{Davis:1984vi}, it follows that $\mathcal{X}$ also has the strong Markov property. This implies that for any $\mathcal{F}$-measurable bounded function $f:\Omega\mapsto \mathds{R}^k$, for $k\ge 1$, we have 
\[\mathds{E}_{\bm{\alpha}}\left[\left. f(\mathcal{X}\circ\theta_Z)\Indic{Z<\infty} \;\right| \mathcal{F}_Z\right] = \mathds{E}_{\bm{A}_{Z}}\left[f(\mathcal{X})\right]\Indic{Z<\infty},\]
{where $\mathcal{X}\circ\theta_Z(\omega)=\mathcal{X}(\theta_{Z(\omega)}(\omega))$}.
Next we investigate a useful implication of the minimality property of $\orbit$ and Lemma \ref{lem:bminimality}.
\begin{Corollary}
	\label{cor:bminimality}
Fix $x\ge 0$, $k\in\mathcal{S}$ and let $\{\Pi(s)\}_{s\ge 0}$ be a collection of matrices with identical dimensions. If $\{\bm{\alpha}\Pi(s): \bm{\alpha}\in\mathfrak{Z}^k, s\in [0, x]\}$ is uniformly entrywise-bounded, then so is $\{\Pi(s):s\in [0, x]\}$.
\end{Corollary}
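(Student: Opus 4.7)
The plan is to reduce the statement to a short linear-algebra argument via Lemma \ref{lem:bminimality}. That lemma supplies $\eta^k$ linearly independent vectors $\bm{h}_1,\ldots,\bm{h}_{\eta^k}\in\mathfrak{Z}^k$, which I would assemble into an $\eta^k\times\eta^k$ matrix $H$ by taking these vectors as rows. Linear independence of the rows is equivalent to $H$ being invertible, and crucially $H$ depends only on the set $\mathfrak{Z}^k$, not on $s$ or on the family $\{\Pi(s)\}_{s\ge 0}$.

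By hypothesis there exists a constant $M<\infty$ such that every entry of $\bm{\alpha}\Pi(s)$ has absolute value at most $M$, uniformly over $\bm{\alpha}\in\mathfrak{Z}^k$ and $s\in[0,x]$. Applying this entrywise bound to each of the rows $\bm{h}_i$ of $H$ yields that the $i$-th row of $H\Pi(s)$, namely $\bm{h}_i\Pi(s)$, has each entry bounded by $M$ in absolute value. Consequently the matrix $H\Pi(s)$ is entrywise bounded by $M$, uniformly in $s\in[0,x]$.

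Finally I would write $\Pi(s)=H^{-1}\bigl(H\Pi(s)\bigr)$ and note that each entry of $\Pi(s)$ is a fixed linear combination of the entries of $H\Pi(s)$, whose coefficients are entries of the fixed matrix $H^{-1}$. This produces a uniform entrywise bound of the form $\eta^k\,\lVert H^{-1}\rVert_{\max}\,M$ for $\Pi(s)$, independent of $s\in[0,x]$, which is the desired conclusion.

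I do not expect any substantive obstacle here; the only non-trivial ingredient is Lemma \ref{lem:bminimality}, which guarantees that $\mathfrak{Z}^k$ is \emph{rich enough} to span the ambient $\eta^k$-dimensional space, and the remainder is the standard observation that a linear map is uniformly bounded on the whole space as soon as it is uniformly bounded on a spanning set.
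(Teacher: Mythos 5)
Your proof is correct and takes essentially the same approach as the paper: both rely on Lemma \ref{lem:bminimality} to produce $\eta^k$ linearly independent vectors in $\mathfrak{Z}^k$, and then use the fact that uniform boundedness of $\bm{\alpha}\Pi(s)$ over this spanning set forces uniform boundedness of $\Pi(s)$ itself. The only cosmetic difference is that the paper argues by contradiction (picking a putatively unbounded entry $\Pi(s_n)_{j\ell}$ and expressing $\bm{e}_j'$ as a linear combination of the basis vectors), whereas you argue directly by stacking the basis vectors into an invertible matrix $H$ and writing $\Pi(s)=H^{-1}\bigl(H\Pi(s)\bigr)$ — the coefficients $c_m$ in the paper's argument are exactly the entries of a row of your $H^{-1}$.
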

\begin{proof}
Suppose $\exists \{s_n\}_{n=1}^\infty\subset [0,x]$ and $j, \ell \in \{1,\dots , \eta^k\}$ such that  $\{\Pi(s_n)_{j\ell}\}_{n=1}^\infty$ is unbounded. Let $\{\bm{g}_m\}_{m}\subset \mathfrak{Z}^k$ be a collection of $\eta^k$ linearly independent vectors and let $\{c_m\}_m\subset\mathds{R}$ be such that $\sum_m c_m \bm{g}_m = \bm{e}_j'$, where $\bm{e}_j$ denotes the unit column vector that is nonzero only at its $j$th entry. Then 
\[\left\{\sum_m c_m (\bm{g}_m\Pi(s_n))_\ell \right\}_{n=1}^\infty = \left\{\Pi(s_n)_{j\ell}\right\}_{n=1}^\infty\]
is unbounded, which is a contradiction.
\end{proof}
Next, we analyze the solution of a particular integral matrix-equation which is key to our analysis throughout this paper.
\begin{theorem}	
	\label{th:explicitphi}
Let $\Pi:\mathds{R}_+ \mapsto \mathds{R}^{m \times m}$ and $A, B \in \mathds{R}^{m \times m}$ be such that
   $\Pi(\cdot)$ is uniformly entrywise-bounded in compact intervals, and 
  for all $x\ge 0$
\begin{align}\label{eq:explicitphi}
\Pi(x)= e^{A x} + \int_0^x e^{As}B\Pi(x-s)\dd s.
\end{align}
%
Then, $\Pi(x)=e^{(A+B)x}$ for all $x\ge 0$.
\end{theorem}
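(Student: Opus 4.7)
My plan is to split the argument into existence and uniqueness. For existence, I would verify directly that $\Phi(x):=e^{(A+B)x}$ solves the integral equation. Since $\Phi'(x)=(A+B)\Phi(x)=A\Phi(x)+B\Phi(x)$ with $\Phi(0)=I$, the standard variation-of-parameters (Duhamel) identity gives
\[
\Phi(x)=e^{Ax}+\int_0^x e^{A(x-u)}B\Phi(u)\,\dd u,
\]
and a change of variables $s=x-u$ recovers (\ref{eq:explicitphi}). Since $\Phi$ is clearly uniformly entrywise-bounded on compact intervals, it qualifies as a candidate solution.

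For uniqueness, I would argue that any two solutions $\Pi_1,\Pi_2$ satisfying the hypotheses have difference $\Delta:=\Pi_1-\Pi_2$ obeying the homogeneous Volterra equation
\[
\Delta(x)=\int_0^x e^{As}B\,\Delta(x-s)\,\dd s, \qquad x\ge 0.
\]
Fix an arbitrary horizon $T>0$ and pick a submultiplicative matrix norm $\|\cdot\|$. Set $K:=\sup_{s\in[0,T]}\|e^{As}B\|<\infty$ and $M:=\sup_{s\in[0,T]}\|\Delta(s)\|<\infty$, both finite by continuity of $s\mapsto e^{As}B$ and the boundedness hypothesis on $\Pi_1,\Pi_2$. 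A straightforward induction on $n$ then yields
\[
\|\Delta(x)\|\le \frac{(Kx)^n}{n!}\,M \qquad\text{for all } x\in[0,T] \text{ and } n\ge 0,
\]
since each iteration of the Volterra recursion contributes a factor $K$ and integrates out to $x^n/n!$. Letting $n\to\infty$ forces $\Delta\equiv 0$ on $[0,T]$, and since $T$ was arbitrary, $\Pi_1=\Pi_2$ on $\mathds{R}_+$. Combined with the existence step, this proves $\Pi(x)=e^{(A+B)x}$ for all $x\ge 0$.

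The only part that requires care is ensuring that the Picard-style iteration is clean: one must invoke the uniform boundedness of $\Pi(\cdot)$ on compacts (to get $M<\infty$) and the continuity of $s\mapsto e^{As}B$ (to get $K<\infty$) before running the induction. Without the boundedness hypothesis, the difference $\Delta$ could a priori blow up and the argument would break, which is presumably why it is included in the statement. Everything else reduces to a standard Gronwall/Volterra uniqueness computation, so I expect no substantive obstacle beyond tidying up the induction bookkeeping.
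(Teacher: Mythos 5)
Your proposal is correct but takes a genuinely different route from the paper. The paper's argument is a direct one-step ODE reduction: it observes (without detailed justification) that the hypotheses force $\Pi$ to be infinitely differentiable, premultiplies \eqref{eq:explicitphi} by $e^{-Ax}$, and differentiates the resulting identity $e^{-Ax}\Pi(x) = I + \int_0^x e^{-Au}B\Pi(u)\,\dd u$ to obtain $\Pi'(x) = (A+B)\Pi(x)$ with $\Pi(0)=I$, from which the conclusion is immediate. You instead split the problem into existence (verifying by Duhamel that $e^{(A+B)x}$ solves the Volterra equation) and uniqueness (a Picard/Gronwall iteration on the homogeneous equation). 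Your route has a small advantage: it never invokes differentiability of $\Pi$, only boundedness and integrability, so it sidesteps the bootstrapping argument the paper implicitly relies on to assert smoothness of $\Pi$ before it can differentiate. The paper's route is shorter once smoothness is granted, since it does not need the $x^n/n!$ induction. Both correctly use the boundedness-on-compacts hypothesis — the paper to justify differentiating under the integral, you to obtain the finite constant $M$ that seeds the Gronwall bound — and both are complete proofs modulo routine details.
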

\begin{proof} 
The assumptions of $\Pi(\cdot)$ imply that it is infinitely differentiable in $(0,\infty)$. 
Premultiplying (\ref{eq:explicitphi}) by $e^{-A x}$ and differentiating gives us 
\begin{align*}
-e^{-A x}A\Pi(x) + e^{-A x}\Pi'(x)= e^{-A x}B\Pi(x),
\end{align*} 
so that $\Pi'(x) = (A + B)\Pi(x),$ 
with initial condition $\Pi(0) = I$. Thus the result follows.
\end{proof}

\subsection{Properties of the orbit process}

In the following, we study the average behaviour of the process $\{\bm{A}_t\}_{t\ge 0}$ over time restricted to certain events. By (\ref{eq:solODE1}), we have that if $\orbit$ is simple and does not leave $\mathfrak{Z}^k$ by time~$t$, for $k \in \{+, -\}$, then the row vector $\bm{A}_t$ is proportional to $\bm{A}_0e^{C^k t}$. The following is an analogous result for the general orbit process.
\begin{theorem}
	\label{th:expandedB1}
	For all $k\in\{+,-,0\}$ and $\bm{\alpha}\in\mathfrak{Z}^k$,
\begin{align}
	\label{eq:meanCk}
\mathds{E}_{\bm{\alpha}}\left[\bm{A}_t\Indic{\bm{A}_s\in \mathfrak{Z}^k\;\forall s\in[0,t]}\right] = \bm{\alpha} e^{C^k t}\quad \mbox{ for } t\ge 0,
\end{align} 
where $C^k\in\mathds{R}^{\eta^k\times{\eta^k}}$ is of the form
%
\[
	C^k=\begin{pmatrix}
	C^k_{11}& \cdots& C^k_{1n^k}\\
	\vdots&\ddots&\vdots\\
	C^k_{n^k1}& \cdots& C^k_{n^kn^k}
	\end{pmatrix}.
\]
\end{theorem}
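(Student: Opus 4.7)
The plan is to derive a matrix-valued Volterra equation for which Theorem \ref{th:explicitphi} immediately gives the answer $e^{C^k t}$. Fix $k\in\{+,-,0\}$ and $\bm{\alpha}\in\mathfrak{Z}^k_{i_0}$ for some $i_0\le n^k$, and set $\Pi^k(t,\bm{\alpha}):=\mathds{E}_{\bm{\alpha}}[\bm{A}_t\Indic{\bm{A}_s\in\mathfrak{Z}^k\,\forall s\in[0,t]}]$. Conditioning on the first jump time $T_1$ of $\orbit$ and, if $T_1\le t$, on its landing block (which, on the event of interest, must lie in $\mathfrak{Z}^k$, hence equal $\mathfrak{Z}^k_j$ for some $j\ne i_0$), and then using the survival probability (\ref{eq:genorbitnojumps1}), the deterministic evolution (\ref{eq:Atnojumpsh2}), and the strong Markov property at $T_1$ yields the renewal identity
\begin{align*}
\Pi^k(t,\bm{\alpha}) = \bm{\alpha}e^{\Gamma^k t} + \sum_{j\ne i_0}\int_0^t \bm{\alpha}e^{\Gamma^k s}\widehat{C}^k_{i_0 j}\bm{1}\,\Pi^k\!\left(t-s,\,\tfrac{\bm{\alpha}e^{\Gamma^k s}\widehat{C}^k_{i_0 j}}{\bm{\alpha}e^{\Gamma^k s}\widehat{C}^k_{i_0 j}\bm{1}}\right)ds.
\end{align*}

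Next, I would establish the linear form $\Pi^k(t,\bm{\alpha}) = \bm{\alpha}G^k(t)$ for a matrix-valued function $G^k$ independent of $\bm{\alpha}$. Iterating the above identity starting from the initial approximation $\bm{\alpha}e^{\Gamma^k t}$, each successive iterate is manifestly linear in $\bm{\alpha}$ because the scalar factor $\bm{\alpha}e^{\Gamma^k s}\widehat{C}^k_{i_0 j}\bm{1}$ precisely cancels the denominator inside the next-order argument. Since $\lambda$ is continuous and $\mathfrak{Z}^k$ is bounded, $\lambda$ is bounded on $\mathfrak{Z}^k$, so the jump count of $\orbit$ on $[0,t]$ is stochastically dominated by a Poisson variable; dominated convergence then yields $\Pi^k(t,\bm{\alpha}) = \bm{\alpha}G^k(t)$ for a uniquely determined matrix function $G^k$.

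Substituting the linear form back into the renewal identity collapses the $\bm{1}$-normalisers, and for $\bm{\alpha}\in\mathfrak{Z}^k_{i_0}$ the block structure of $\Gamma^k$ and of the $\widehat{C}^k_{i_0 j}$ yields the key identity $\sum_{j\ne i_0}\bm{\alpha}e^{\Gamma^k s}\widehat{C}^k_{i_0 j} = \bm{\alpha}e^{\Gamma^k s}(C^k-\Gamma^k)$. Since the resulting vector identity $\bm{\alpha}G^k(t) = \bm{\alpha}e^{\Gamma^k t} + \int_0^t \bm{\alpha}e^{\Gamma^k s}(C^k-\Gamma^k)G^k(t-s)\,ds$ holds for every $\bm{\alpha}$ in each $\mathfrak{Z}^k_{i_0}$, and thus on a spanning set of $\mathds{R}^{\eta^k}$ by Lemma \ref{lem:bminimality}, it upgrades to the matrix renewal equation
\begin{align*}
G^k(t) = e^{\Gamma^k t} + \int_0^t e^{\Gamma^k s}(C^k-\Gamma^k)G^k(t-s)\,ds.
\end{align*}
Boundedness of $\bm{A}_t\in\mathfrak{Z}^k$ ensures $\{\bm{\alpha}G^k(t):\bm{\alpha}\in\mathfrak{Z}^k,\,t\in[0,x]\}$ is uniformly entrywise-bounded, so Corollary \ref{cor:bminimality} yields the same for $G^k$ on $[0,x]$. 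Applying Theorem \ref{th:explicitphi} with $A=\Gamma^k$ and $B=C^k-\Gamma^k$ then forces $G^k(t) = e^{(\Gamma^k+(C^k-\Gamma^k))t} = e^{C^k t}$, which is exactly (\ref{eq:meanCk}).

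The main obstacle I expect is the linearity step: establishing $\Pi^k(t,\bm{\alpha}) = \bm{\alpha}G^k(t)$ requires uniform control over the Picard iterates via Poisson domination of the jump count. Once this is secured, identifying $A=\Gamma^k$ and $B=C^k-\Gamma^k$ from the block structure and invoking Theorem \ref{th:explicitphi} is immediate.
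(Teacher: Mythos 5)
Your proposal is correct and follows essentially the same route as the paper: condition on the first jump epoch, exploit the block structure to derive the renewal kernel $e^{\Gamma^k s}(C^k-\Gamma^k)$, establish linearity in the initial orbit $\bm{\alpha}$ so that a matrix-valued Volterra equation $G^k(t)=e^{\Gamma^k t}+\int_0^t e^{\Gamma^k s}(C^k-\Gamma^k)G^k(t-s)\,\dd s$ holds (using Lemma \ref{lem:bminimality} and Corollary \ref{cor:bminimality} for uniqueness and boundedness), then solve via Theorem \ref{th:explicitphi}. The only cosmetic difference is that you phrase the intermediate step as Picard iteration of the renewal identity with a.s.-finiteness of the jump count from Poisson domination, whereas the paper carries out an explicit induction on the number $L_t$ of jumps in $[0,t]$ and then sums; these are the same iterates.
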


\begin{proof}
 Let $s_0(\bm{\mathcal{A}}):= \inf\{s>0: \bm{A}_{s^-}\neq \bm{A}_s\}$ be the time of the first jump of $\bm{\mathcal{A}}$, and for $t\ge 0$ let
%
\begin{align} 
	L_t:=\#\{s\in (0,t]:  \bm{A}_{s^-}\neq \bm{A}_s\}
\end{align}
be the number of jumps of $\bm{\mathcal{A}}$ in the interval $(0,t]$. When there is no ambiguity, we drop the dependency on $\bm{\mathcal{A}}$ and write simply $L_t$.
 
First, we show, by induction, that  for all $n\ge 0$,
\begin{align}
	\label{eq:multisigma1}
\mathds{E}_{\bm{\alpha}}\left[\bm{A}_t\Indic{\bm{A}_s\in\mathfrak{Z}^k\;\forall s\in[0,t], \;L_t=n}\right] = \bm{\alpha}\Sigma_n(t)
\end{align}
for unique continuous matrices $\{\Sigma_n (\cdot)\}_{n\ge 0}$. 
  
  {\bf Case $n=0$.} Equation (\ref{eq:genorbitnojumps1}) implies that
  \begin{align}
  \mathds{E}_{\bm{\alpha}}&\left[\bm{A}_t\Indic{\bm{A}_s\in\mathfrak{Z}^k\;\forall s\in[0,t],\; L_t=0}\right] = \bm{\alpha}e^{\Gamma^k t}.\label{eq:auxbase1}
\end{align}
Thus, (\ref{eq:multisigma1}) is true if we choose $\Sigma_0(t) = e^{\Gamma^k t}$, and this solution is unique by Lemma \ref{lem:bminimality}.

{\bf Inductive part.} Suppose that (\ref{eq:multisigma1}) is true for some $n\ge 1$. Then, for $r\in(0,t)$
 \begin{align}
\mathds{E}_{\bm{\alpha}}&\left[\bm{A}_t\Indic{\bm{A}_s\in\mathfrak{Z}^k\;\forall s\in[0,t], \;L_t=n+1,\; s_0\in [r-\dd r, r]}\right] \nonumber\\
  & = \mathds{E}_{\bm{\alpha}}\left[\mathds{E}_{\bm{\alpha}}\left[\left.\bm{A}_t\Indic{\bm{A}_s\in\mathfrak{Z}^k\;\forall s\in[r,t],\; L_t\circ\theta_r=n}\;\right| \mathcal{F}_r\right]\Indic{s_0\in [r-\dd r, r], \;\bm{A}_{r}\in\mathfrak{Z}^k}\right] \nonumber \\
  & =  \mathds{E}_{\bm{\alpha}}\left[\mathds{E}_{\bm{A}_{r}}\left[\bm{A}_{t-r}\Indic{\bm{A}_s \in\mathfrak{Z}^k\;\forall s\in[0,t-r],\; L_{t-r} =n}\right]\Indic{s_0\in [r-\dd r, r], \;\bm{A}_{r}\in\mathfrak{Z}^k}\right] \nonumber\\
 & =  \mathds{E}_{\bm{\alpha}}\left[\left(\bm{A}_r\Sigma_n(t-r)\right)\Indic{s_0\in [r-\dd r,r], \;\bm{A}_{r}\in\mathfrak{Z}^k}\right] \nonumber\\
  & =  \mathds{E}_{\bm{\alpha}}\left[\bm{A}_r\Indic{s_0\in [r-\dd r,r], \;\bm{A}_{r}\in\mathfrak{Z}^k}\right]\Sigma_n(t-r), \nonumber
\end{align}
where the strong Markov property is used in the second equality, and the induction hypothesis in the third. Now, for any $\bm{\alpha}^*\in\mathfrak{Z}^k$,
\begin{align}
\mathds{E}_{\bm{\alpha}^*}\left[\bm{A}_{\dd r}\Indic{s_0\in [0,\dd r], \;\bm{A}_{\dd r}\in\mathfrak{Z}^k}\right] &= \sum_{1\le i\le n^k}\sum_{\substack{1\le j\le n^k, \\ j\neq i}} \frac{\bm{\alpha}^* \widehat{C}^k_{ij}}{\bm{\alpha}^* \widehat{C}^k_{ij}\bm{1}} \left(\bm{\alpha}^* \widehat{C}^k_{ij}\bm{1}\dd r\right) \Indic{\bm{\alpha}^*\in\mathfrak{Z}^k_i} \nonumber\\
& = \bm{\alpha}^*\left(C^k - \Gamma^k\right)\dd r, \nonumber
\end{align}

Thus, 
\begin{align*}
& \mathds{E}_{\bm{\alpha}} \left[\bm{A}_r\Indic{s_0\in (r-\dd r,r),\; \bm{A}_{r}\in\mathfrak{Z}^k}\right] \\
& = \mathds{E}_{\bm{\alpha}} \left[\mathds{E}_{\bm{\alpha}}\left[\left.\bm{A}_r\Indic{s_0\in (r-\dd r, r),\; \bm{A}_{r}\in\mathfrak{Z}^k}\;\right| \mathcal{F}_{r-\dd r}\right]\Indic{L_{r-\dd r} =0}\right]\\
& = \mathds{E}_{\bm{\alpha}} \left[\mathds{E}_{\bm{A}_{r-\dd r}}\left[\bm{A}_{\dd r}\Indic{s_0\in (0,\dd r),\;\bm{A}_{\dd r}\in\mathfrak{Z}^k}\right]\Indic{L_{r-\dd r} =0}\right]\\
& = \mathds{E}_{\bm{\alpha}} \left[\left(\bm{A}_{r-\dd r}\left({C}^k - \Gamma^k\right)\right)\dd r\Indic{L_{r-\dd r} =0}\right]\\
& = \bm{\alpha}e^{\Gamma^k (r-\dd r)}\left({C}^k- \Gamma^k\right)\dd r = \bm{\alpha}e^{\Gamma^k r}\left({C}^k - \Gamma^k\right)\dd r,
\end{align*}
where (\ref{eq:auxbase1}) is used in the second last equality.
Thus,
\begin{align*}
  \mathds{E}_{\bm{\alpha}}&\left[\bm{A}_t\Indic{\bm{A}_s\in \mathfrak{Z}^k\;\forall s\in[0,t],\; L_t=n+1}\right] = \bm{\alpha}\Sigma_{n+1}(t)
  \end{align*}
  with
  \begin{align}
  	\label{eq:Sigmanint1}
  \Sigma_{n+1}(t) = \int_0^t e^{\Gamma^k r}\left({C}^k - \Gamma^k\right)\Sigma_n(t-r)\dd r,\end{align}
  which is continuous. The uniqueness of $\Sigma_{n+1}(\cdot)$ is guaranteed by Lemma \ref{lem:bminimality}.
Now, 
\begin{align}
	\label{eq:sumSigmanm}
	\mathds{E}_{\bm{\alpha}}\left[\bm{A}_t\Indic{\bm{A}_s\in \mathfrak{Z}^k\;\forall s\in[0,t],\; L_t\le m }\right]= \bm{\alpha} \sum_{n=0}^m \Sigma_n(t)\quad\mbox{for all } m\ge 0.
	\end{align}
The boundedness of $\mathfrak{Z}^k$ and Corollary \ref{cor:bminimality} imply that
$\{\sum_{n=0}^m \Sigma_n(s): m\ge 0, s \in [0,t]\}$ is entrywise-bounded.
%
%
Moreover, since 
\begin{align}
	\label{eq:simsigmatoinfty1}
\mathds{E}_{\bm{\alpha}}\left[\bm{A}_t\Indic{\bm{A}_s\in\mathfrak{Z}^k\;\forall s\in[0,t],\; L_t\ge  m }\right]= \bm{\alpha} \sum_{n=m}^\infty \Sigma_n(t)\end{align} 
which converges to $\bm{0}$ as $m\rightarrow 0$, Lemma \ref{lem:bminimality} implies that $\sum_{n=m}^\infty \Sigma_n(t)\rightarrow\bm{0}$ as $m\rightarrow \infty$. This guarantees the existence of $\Sigma_n(\cdot):=\sum_{n=1}^\infty \Sigma_n(\cdot)$ which is uniformly entrywise-bounded in compact intervals. Hence,
\[
	\mathds{E}_{\bm{\alpha}}\left[\bm{A}_t\Indic{\bm{A}_s\in \mathfrak{Z}^k \;\forall s\in[0,t]}\right]= \bm{\alpha}  \Sigma(t)\quad \mbox{ for } t\ge 0.
\]
Using the entrywise-boundedness of $\{\sum_{n=0}^m \Sigma_n(t): m\ge 0, s\in [0,t]\}$, the Bounded Convergence Theorem and (\ref{eq:Sigmanint1}), we obtain
\begin{align}
  \label{eq:integralSigma1}
  \Sigma(t) = e^{\Gamma^k t} + \int_0^t  e^{\Gamma^k s} \left({C}^k - \Gamma^k\right)\Sigma(t-s)\dd s\qquad (t\ge 0).
  \end{align}
%

Theorem \ref{th:explicitphi} and (\ref{eq:integralSigma1}) imply $\Sigma(t)=\exp \left(\left[\Gamma^k + ({C}^k - \Gamma^k)\right] t \right)=e^{{C}^kt}$, so (\ref{eq:meanCk}) follows.
\end{proof}  
%
\begin{remark}
In this paper, the majority of the proofs have a structure similar to that of Theorem \ref{th:expandedB1}: first an induction, then uniqueness and boundedness arguments, and finally finding the solution to a certain integral equation. For the sake of brevity, from here on we omit the steps concerning uniqueness and boundedness of matrices, given that they can be deduced from arguments analogous to those in the proof of Theorem \ref{th:expandedB1}.
\end{remark}

Note that since $\bm{A}_t\bm{1}=1$,  
\[\mathds{P}_{\bm{\alpha}}\left(\bm{A}_s\in\mathfrak{Z}^k\;\forall s\in[0,t]\right) = \bm{\alpha}e^{C^k t}\bm{1}.\]

While the matrices $\widehat{C}_{ij}^k$ have a deterministic physical meaning for $\boldsymbol{\mathcal{A}}$ given by (\ref{eq:ODEFRAP1}), $C^k$ does not. Nevertheless, Theorem \ref{th:expandedB1} implies that $C^k$ does have a role in the behaviour of $\boldsymbol{\mathcal{A}}$, not in a deterministic sense, but in an average sense instead. Lemma~\ref{lem:basicpropFRAP1} elaborates on this further. 

For $k\in\{+,-,0\}$, define
\begin{align}
	\label{def:rho}  
	\rho_k := \inf\{s\ge 0: \bm{A}_s\notin\mathfrak{Z}^k\},
\end{align} 
 the \emph{first exit time} of $\bm{\mathcal{A}}$ from $\mathfrak{Z}^k$. When there is no ambiguity, we drop the dependency on $\bm{\mathcal{A}}$ and write $\rho_k$.

\begin{Lemma}
	\label{lem:basicpropFRAP1}
	%
For $\ell, k \in \{+, - ,0\}$, $\ell\neq k$, and $t\ge 0$,
\begin{align}
	\label{eq:auxdensity1} 
	\mathds{E}_{\bm{\alpha}}\left[\bm{A}_{\rho_k}\Indic{\rho_k\in [t, t+ \dd t], \;\bm{A}_{\rho_k}\in\mathfrak{Z}^\ell}\right] & = \bm{\alpha} e^{C^k t}D^{k\ell}\dd t,  \\ 
%
	\label{eq:meanafterexiting1}
	\mathds{E}_{\bm{\alpha}}\left[\bm{A}_{\rho_k}\Indic{\bm{A}_{\rho_k}\in\mathfrak{Z}^\ell}\right] & = \bm{\alpha} (-C^k)^{-1}D^{k\ell},
\end{align}
where $\bm{\alpha}\in\mathfrak{Z}^k$ and $D^{k\ell}\in\mathds{R}^{\eta^k\times{\eta^\ell}}$ is of the form
%
\[
	D^{k\ell}=\begin{pmatrix}
	D^{k\ell}_{11}& \cdots& D^{k\ell}_{1n^\ell}\\ 
	\vdots&\ddots&\vdots\\
	D^{k\ell}_{n^k1}& \cdots& D^{k\ell}_{n^kn^\ell}\end{pmatrix}.
\]
\end{Lemma}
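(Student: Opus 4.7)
The two statements (\ref{eq:auxdensity1}) and (\ref{eq:meanafterexiting1}) are naturally linked: (\ref{eq:auxdensity1}) is a density-type identity and (\ref{eq:meanafterexiting1}) should follow by integrating it in $t\in[0,\infty)$. My plan is therefore to (i) prove (\ref{eq:auxdensity1}) via the strong Markov property applied at time $t$, invoking Theorem \ref{th:expandedB1} to handle the behaviour up to $t$ and performing an infinitesimal analysis to handle the jump in $[t,t+\dd t]$; and then (ii) integrate in $t$, using Condition 3.6, Corollary \ref{cor:bminimality} and Lemma \ref{lem:bminimality} to argue that $C^k$ is invertible with $\int_0^\infty e^{C^k s}\dd s=(-C^k)^{-1}$.

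\textbf{Proof of (\ref{eq:auxdensity1}).} Set $\Phi^\ell(t):=\mathds{E}_{\bm{\alpha}}[\bm{A}_{\rho_k}\Indic{\rho_k\le t,\,\bm{A}_{\rho_k}\in\mathfrak{Z}^\ell}]$. I will show $\Phi^\ell$ is differentiable with derivative $\bm{\alpha}e^{C^kt}D^{k\ell}$. On the event $\{\rho_k\in(t,t+\dd t]\}$ one has $\bm{A}_s\in\mathfrak{Z}^k$ for all $s\in[0,t]$; conditioning at time $t$ via the strong Markov property,
\begin{align*}
\Phi^\ell(t+\dd t)-\Phi^\ell(t)
=\mathds{E}_{\bm{\alpha}}\!\left[\,\mathds{E}_{\bm{A}_t}\!\bigl[\bm{A}_{\rho_k}\Indic{\rho_k\le \dd t,\,\bm{A}_{\rho_k}\in\mathfrak{Z}^\ell}\bigr]\Indic{\bm{A}_s\in\mathfrak{Z}^k\,\forall s\in[0,t]}\right].
\end{align*}
For $\bm{\beta}\in\mathfrak{Z}^k_i$, the PDMP description of $\orbit$ gives that a jump to $\mathfrak{Z}^\ell_j$ during $[0,\dd t]$ occurs with rate $\bm{\beta}\widehat{D}^{k\ell}_{ij}\bm{1}$ and lands at $(\bm{\beta}\widehat{D}^{k\ell}_{ij})/(\bm{\beta}\widehat{D}^{k\ell}_{ij}\bm{1})$, so
\begin{align*}
\mathds{E}_{\bm{\beta}}\!\bigl[\bm{A}_{\rho_k}\Indic{\rho_k\le \dd t,\,\bm{A}_{\rho_k}\in\mathfrak{Z}^\ell}\bigr]=\sum_{j=1}^{n^\ell}\bm{\beta}\widehat{D}^{k\ell}_{ij}\,\dd t+o(\dd t)=\bm{\beta}D^{k\ell}\dd t+o(\dd t),
\end{align*}
where the last equality uses that the block-stacking structure of the $\widehat{D}^{k\ell}_{ij}$ over $j$ reconstitutes the $i$-th block-row of $D^{k\ell}$, which is the only block-row activated by $\bm{\beta}\in\mathfrak{Z}^k_i$. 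Plugging this into the previous display and applying Theorem \ref{th:expandedB1} gives $\Phi^\ell(t+\dd t)-\Phi^\ell(t)=\bm{\alpha}e^{C^kt}D^{k\ell}\dd t+o(\dd t)$, as desired. Equivalently $\Phi^\ell(t)=\int_0^t\bm{\alpha}e^{C^ks}D^{k\ell}\dd s$, which is the integrated form of (\ref{eq:auxdensity1}).

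\textbf{Proof of (\ref{eq:meanafterexiting1}).} Taking $t\to\infty$ in $\Phi^\ell(t)$ and using Condition 3.6 (together with the inclusion $\{\bm{A}_s\in\mathfrak{Z}^k\,\forall s\}\subset\{\bm{A}_s\in\mathfrak{Z}^k\cup\mathfrak{Z}^0\,\forall s\}$ to handle $k=0$ as well) one has $\rho_k<\infty$ $\mathds{P}_{\bm{\alpha}}$-a.s., so $\mathds{E}_{\bm{\alpha}}[\bm{A}_{\rho_k}\Indic{\bm{A}_{\rho_k}\in\mathfrak{Z}^\ell}]=\int_0^\infty\bm{\alpha}e^{C^ks}D^{k\ell}\dd s$. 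To identify this integral with $\bm{\alpha}(-C^k)^{-1}D^{k\ell}$, I need $C^k$ invertible with eigenvalues of strictly negative real part. By boundedness of $\mathfrak{Z}^k$ and Theorem \ref{th:expandedB1}, $|\bm{\alpha}e^{C^kt}|\le M\,\mathds{P}_{\bm{\alpha}}(\rho_k>t)\to 0$ entrywise for every $\bm{\alpha}\in\mathfrak{Z}^k$; invoking Lemma \ref{lem:bminimality} to produce $\eta^k$ linearly independent such $\bm{\alpha}$ then yields $e^{C^kt}\to 0$ as $t\to\infty$, hence the spectral condition and $\int_0^\infty e^{C^ks}\dd s=(-C^k)^{-1}$.

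\textbf{Main obstacle.} The delicate step is the infinitesimal computation of $\mathds{E}_{\bm{\beta}}[\bm{A}_{\rho_k}\Indic{\rho_k\le\dd t,\bm{A}_{\rho_k}\in\mathfrak{Z}^\ell}]=\bm{\beta}D^{k\ell}\dd t+o(\dd t)$: one must justify that multi-jump contributions are $o(\dd t)$ uniformly in $\bm{\beta}\in\mathfrak{Z}^k$, and that the block-cancellations implicit in the identity $\sum_{j}\bm{\beta}\widehat{D}^{k\ell}_{ij}=\bm{\beta}D^{k\ell}$ (for $\bm{\beta}\in\mathfrak{Z}^k_i$) reconstruct the full matrix $D^{k\ell}$ after integrating over $\bm{A}_t$. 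Both facts rely on the boundedness of $\mathfrak{Z}^k$ (so the total jump intensity $-\bm{\beta}\Gamma^k\bm{1}$ is uniformly bounded) and on the block-diagonal/block-off-diagonal structure of $\Gamma^k$ and the $\widehat{D}^{k\ell}_{ij}$; the remaining steps are routine, and following the remark after Theorem \ref{th:expandedB1} I would omit explicit uniqueness and boundedness verifications for the matrix functions involved.
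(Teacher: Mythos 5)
Your proof is correct and follows essentially the same route as the paper's: the same infinitesimal identity $\mathds{E}_{\bm{\beta}}[\bm{A}_{\rho_k}\Indic{\rho_k\le\dd t,\,\bm{A}_{\rho_k}\in\mathfrak{Z}^\ell}]=\bm{\beta}D^{k\ell}\dd t$ obtained from the PDMP jump structure, then the strong Markov property at time $t$ combined with Theorem~\ref{th:expandedB1}, and finally integration in $t$ after establishing $e^{C^kt}\to 0$ via Equation~\eqref{eq:jumpshappen1}, boundedness of $\mathfrak{Z}^k$, and Lemma~\ref{lem:bminimality}. The only cosmetic difference is that you package the first part as a differentiability statement for a cumulative function $\Phi^\ell(t)$ rather than working directly with differentials as the paper does.
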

\begin{proof}
First, note that
\begin{align*}
\mathds{E}_{\bm{\alpha}}&\left[\bm{A}_{\rho_k}\Indic{\rho_k\in [0, \dd t], \; \bm{A}_{\rho_k}\in \mathfrak{Z}^\ell }\right]  = \sum_{1\le i\le n^k}\sum_{1\le j\le n^\ell} \frac{\bm{\alpha} \widehat{D}^{k\ell}_{ij}}{\bm{\alpha} \widehat{D}^{k\ell}_{ij}\bm{1}}  \left(\bm{\alpha} \widehat{D}^{k\ell}_{ij}\bm{1}\dd t\right) \Indic{\bm{\alpha}\in\mathfrak{Z}^k_i} =  \bm{\alpha}D^{k\ell}\dd t.
\end{align*}

Thus, 
\begin{align*}
& \mathds{E}_{\bm{\alpha}}\left[\bm{A}_{\rho_k}\Indic{\rho_k\in [t, t+ \dd t], \; \bm{A}_{\rho_k}\in \mathfrak{Z}^\ell }\right] \\
& = \mathds{E}_{\bm{\alpha}}\left[\mathds{E}_{\bm{\alpha}}\left[\left.\bm{A}_{\rho_k}\Indic{\rho_k\in [t, t+ \dd t],\; \bm{A}_{\rho_k}\in \mathfrak{Z}^\ell}\;\right| \mathcal{F}_t\right]\Indic{\bm{A}_s\in\mathfrak{Z}^k\;\forall s\in[0,t]}\right]\\
& = \mathds{E}_{\bm{\alpha}}\left[\mathds{E}_{\bm{A}_t}\left[\bm{A}_{\rho_k}\Indic{\rho_k\in [0, \dd t], \;\bm{A}_{\rho_k}\in \mathfrak{Z}^\ell}\right]\Indic{\bm{A}_s\in\mathfrak{Z}^k\;\forall s\in[0,t]}\right]\\
& = \mathds{E}_{\bm{\alpha}}\left[\left(\bm{A}_tD^{k\ell}\dd t\right)\Indic{\bm{A}_s\in\mathfrak{Z}^k\;\forall s\in[0,t]}\right]\\
&  = \bm{\alpha} e^{C^k t}D^{k\ell}\dd t.
\end{align*}
By Theorem \ref{th:expandedB1}  and Equation \eqref{eq:jumpshappen1}, we have $\lim_{t\rightarrow\infty}\bm{\alpha}e^{C^k t} = \bm{0}$. Since this holds for all $\bm{\alpha}\in\mathfrak{Z}^k$, Lemma \ref{lem:bminimality} implies that $\lim_{t\rightarrow\infty}e^{C^k t} = 0$ and so the eigenvalue of maximum real part of $C^k$ must have a strictly negative real part. Then
\begin{align*}
\mathds{E}_{\bm{\alpha}}\left[\bm{A}_{\rho_k}\Indic{\bm{A}_{\rho_k}\in\mathfrak{Z}^\ell }\right] & = \int_0^\infty \bm{\alpha} e^{C^k t}D^{k\ell}\dd t  =  \bm{\alpha} (-C^k)^{-1}D^{k\ell}.
\end{align*}
\end{proof}
Once again, while the matrices $\{\widehat{D}^{k\ell}_{ij}\}$ dictate in a deterministic way where $\orbit$ lands after a jump, $D^{k\ell}$ describes such landings in an \emph{average} sense.
 
\section{Case $n^0=0$} 
	\label{sec:nozerorates}
	Here, we focus on the case $n^0=0$, that is, the state space of $\orbit$ is simply $\mathfrak{Z}^+\cup\mathfrak{Z}^-$ and $\mathcal{R}$ is nowhere piecewise constant. In such a case, our analysis heavily relies on analysing the \emph{up-down and down-up peaks} of $\mathcal{R}$, that is, the epochs at which $\orbit$ jumps from $\mathfrak{Z}^+$ to $\mathfrak{Z}^-$ and from $\mathfrak{Z}^-$ to $\mathfrak{Z}^+$, respectively.
	
\subsection{First-return probabilities}
	\label{sec:firstzempty}
In this section we study the event in which the level process downcrosses its initial point. More specifically, 
define
\begin{align*}
&\tau_-:=\inf\left\{t>0: R_t \le 0\right\}, \quad \tau_+ :=\inf\left\{t>0: R_t \ge 0\right\}, \\
&\Omega_-:=\{\omega \in \Omega: \bm{A}_0 \in \mathfrak{Z}^+, \tau_- < \infty\}, \quad \Omega_+:=\{\bm{A}_0 \in \mathfrak{Z}^-, \tau_+ < \infty \}.
\end{align*}
%
The random variable $\tau_-$ ($\tau_+$) corresponds to the first time the process $\mathcal{R}$ visits $[0,\infty)$ ($(-\infty,0]$), and $\Omega_-$ ($\Omega_+$) is the set of paths in $\Omega$ whose orbit starts in $\mathfrak{Z}^+$ ($\mathfrak{Z}^-$) and whose level eventually downcrosses (upcrosses) level $0$ in finite time. 

We are interested in computing
$\mathds{E}_{\bm{\alpha}}\left[\bm{A}_{\tau_-}\Indic{\tau_- <\infty}\right]$  for $\bm{\alpha}\in\mathfrak{Z}^+.$ 
To that end, we borrow ideas from the FP3 algorithm of \cite{guo2001nonsymmetric}, whose probabilistic interpretation was developed in \cite{bean2005algorithms} and requires a particular partition of all paths in $\Omega_-$. Recall that we assume $\{R_t\}_{t\ge 0}$ is piecewise linear with slopes $\pm1$, which implies that for all $s,t\ge 0$ and $k\in\{+,-\}$,
\begin{align}\label{eq:leveltime1}
\big\{\bm{A}_t\in\mathfrak{Z}^k\big\}\cap\big\{|R_{t+s}-R_t|=s\big\} = \left\{\bm{A}_u\in\mathfrak{Z}^k\;\forall u\in[t,t+s)\right\}.
\end{align}

Let  
%
\[\Omega_1 := \left\{\omega\in\Omega_-:  \{\bm{A}_s\}_{s\ge 0} \mbox{ has exactly one jump from } \mathfrak{Z}^+ \mbox{ to }  \mathfrak{Z}^- \mbox{ during } [0,\tau_-)\right\}, \] 
the set of sample paths whose level component returns to $0$ after a single interval of increase and a single interval of decrease; in other words, there is no down-up peak before $\tau_-$. Then, $\Omega_{-} \setminus \Omega_1$ is the set of sample paths whose level has one or more down-up peaks before $\tau_-$. Define 
%
\[
	p := \left\{\begin{array}{ccl}
	\inf\left\{y\ge 0: \mbox{$\exists t\in[0,\tau_-)$ such that $R_t=y$, $\bm{A}_{t^-}\in\mathfrak{Z}^-, \bm{A}_{t}\in\mathfrak{Z}^+$} \right\}&\mbox{if}& \mathcal{X} \in \Omega_-\setminus\Omega_1, \\ 
	\infty &\mbox{if}& \mathcal{X} \in (\Omega_-\setminus\Omega_1)^c.\end{array}\right.
	\]
	
	For every sample path in $\Omega_-\setminus\Omega_1$, $p$ corresponds to the lowest level at which there is a down-up peak before time $\tau_-$, and we decompose each path at the time such lowest level is attained, denoted by $T_2$. Let $T_1 := \inf \{t \geq 0: R_t = p\}$ be the first time the level reaches $p$ and $T_3 := \sup \{t \geq 0: R_t = p, t < \tau_{-}\}$ be the last time it does so before returning to level zero. Then, $T_1  = p$ by the assumption of $\pm 1$ rates, $T_2 = T_1 + \tau_{-}\circ\theta_{T_1}$, and $T_3 = T_2 + \tau_{-}\circ\theta_{T_2}$.

Now, we recursively define $\Omega_n'\subset\Omega_-$ and $\Omega_n\subset\Omega_-$. Let $\Omega_1' = \Omega_1$. For $n \geq 2$, a path $\omega \in \Omega_-$ is an element of $\Omega_n'$ if and only if $\theta_{T_1}\omega \in \Omega_{n - 1}' \cup \Omega_1$ and $\theta_{T_2}\omega \in \Omega_{n - 1}' \cup \Omega_1$. The collection $\{\Omega_n'\}_{n\ge 1}$ may be thought as a collection of paths of possibly increasing \emph{complexity}, in the sense that the \emph{excusion above level $p$ of a path in $\Omega_n'$} can be decomposed, at time $T_2$, into two consecutive excursions above $p$ (each increasing from $p$ and returning to $p$) of paths in $\Omega_{n-1}'\cup\Omega_1$. Figure \ref{fig:Omegan} exemplifies a path in $\Omega_n'$.

\begin{figure}[h]
	\begin{center} 
	\includegraphics[scale=0.44]{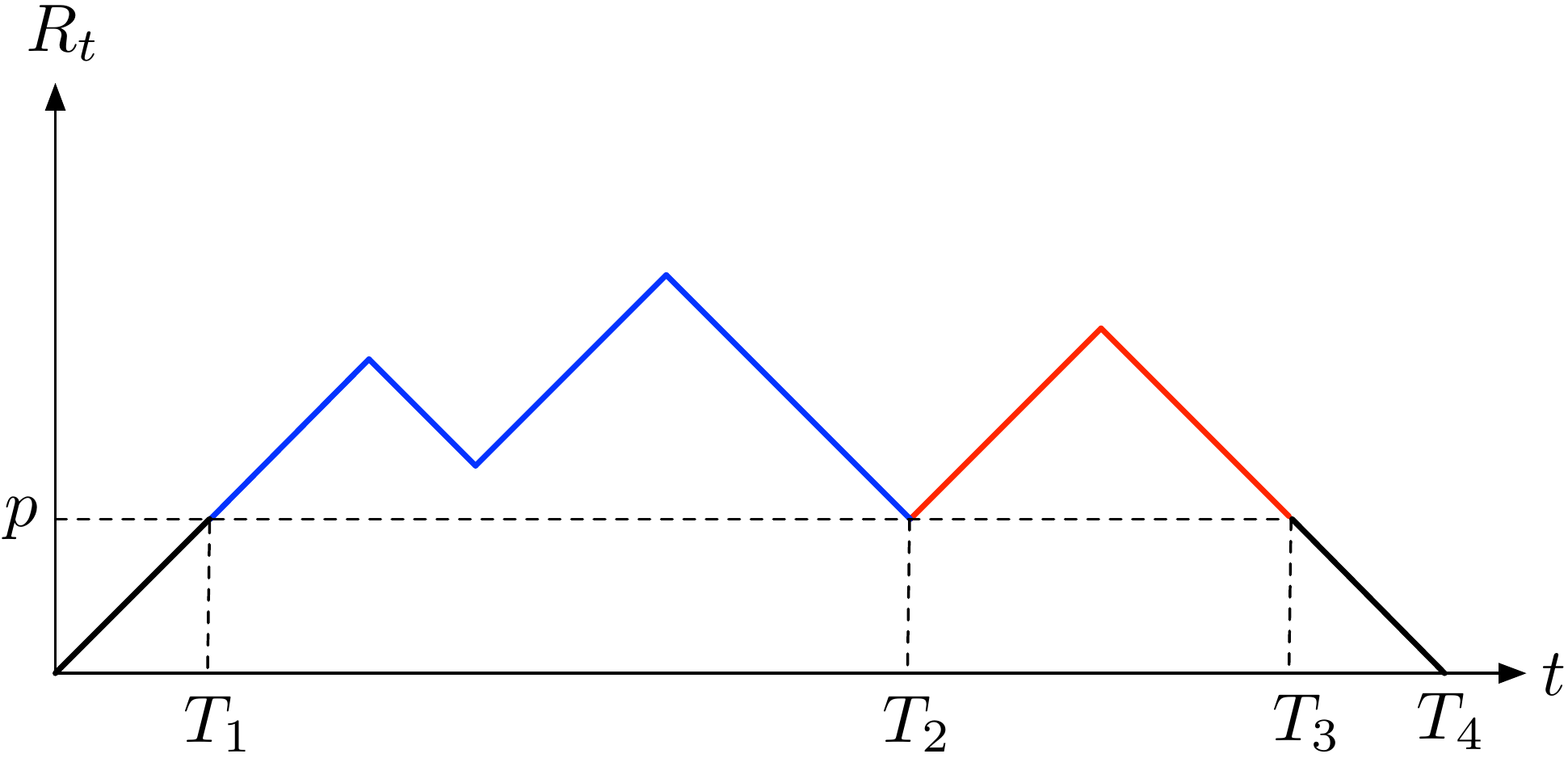}
	\label{fig:Omegan}
	\end{center} 
	\caption{An example of a level process corresponding to $\omega\in\Omega_3'$. The level process corresponding to $\theta_{T_1}\omega, \theta_{T_2}\omega\in\Omega_2'\cup\Omega_1$ up to their downcrossing of level $p$ are shown in blue and red, respectively. The blue and red segments, together, are also the sojourn above level $p$ of $\mathcal{R}$ prior to returning to level $0$.} 
  \label{fig:Omegan}
\end{figure}

Let $\Omega_n=\Omega_n'\cup\Omega_1$, then $\Omega_{n-1}\subset\Omega_{n}$ for $n\ge 2$ and $\Omega_-=\cup_{n=1}^\infty \Omega_n$. In Theorem~\ref{th:Psinintegral3}, we show that restricted to paths in $\Omega_n$, the mean value of $\bm{A}_{\tau_-}$ is characterized by a certain matrix $\Psi_n$. 
%

\begin{theorem}
	\label{th:Psinintegral3}
	For any given $\bm{\alpha}\in\mathfrak{Z}^+$ and for all $n\ge 1$,  
%
\begin{align}
	\label{eq:linpsin}
\mathds{E}_{\bm{\alpha}}\left[\bm{A}_{\tau_-}\Indic{\Omega_n}\right]=\bm{\alpha} \Psi_{n}, 
\end{align}
for unique matrices $\{\Psi_n\}_{n\ge 1}$ with
\begin{align}
	\Psi_{0}  & = \bm{0},  \nonumber \\ 
	\label{eq:psiintegral}
		\Psi_{n} & = \int_0^\infty e^{C^+y}\left(D^{+-} + \Psi_{n-1}D^{-+}\Psi_{n-1}\right)e^{C^-y} \dd y.
	\end{align}
\end{theorem}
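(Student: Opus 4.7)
The plan is to prove~\eqref{eq:linpsin} by induction on $n$, mirroring the FP3-style decomposition used for classical Markov-modulated fluid queues. The tools I would use are Theorem~\ref{th:expandedB1} for the averaged evolution of $\boldsymbol{A}_t$ inside a single set $\mathfrak{Z}^k$, Lemma~\ref{lem:basicpropFRAP1} for the joint density of the first exit time and the landing state at a $\mathfrak{Z}^k\to\mathfrak{Z}^\ell$ transition, and the strong Markov property of $\mathcal{X}$ at the stopping times $T_1, T_2, T_3$.

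For the base case $n=1$, a path in $\Omega_1$ simply rises in $\mathfrak{Z}^+$ from $0$ to some level $y$ over time $y$, undergoes a single up-down peak at $y$, and then descends in $\mathfrak{Z}^-$ back to $0$ over another $y$ units of time. I would condition on the first exit from $\mathfrak{Z}^+$ occurring in $[y,y+\dd y]$ and landing in $\mathfrak{Z}^-$, which by Lemma~\ref{lem:basicpropFRAP1} contributes density $\bm{\alpha}e^{C^+y}D^{+-}\dd y$, and then apply the strong Markov property at that exit time combined with Theorem~\ref{th:expandedB1} on the subsequent sojourn of length $y$ in $\mathfrak{Z}^-$, producing a factor $e^{C^-y}$. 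Integrating in $y$ yields $\mathds{E}_{\bm{\alpha}}[\boldsymbol{A}_{\tau_-}\Indic{\Omega_1}]=\bm{\alpha}\int_0^\infty e^{C^+y}D^{+-}e^{C^-y}\dd y=\bm{\alpha}\Psi_1$, which matches~\eqref{eq:psiintegral} since $\Psi_0=\bm{0}$.

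For the inductive step, fix $n\ge 2$ and assume the identity for $n-1$. Since paths in $\Omega_n'$ have at least one down-up peak whereas paths in $\Omega_1$ have none, $\Omega_n=\Omega_1\sqcup\Omega_n'$, so it suffices to compute $\mathds{E}_{\bm{\alpha}}[\boldsymbol{A}_{\tau_-}\Indic{\Omega_n'}]$. For $\omega\in\Omega_n'$ I would integrate over the density of $p\in[y,y+\dd y]$ and decompose the path at $T_1, T_2, T_3$ into five pieces. Successively applying the strong Markov property at these three times produces the factors $\bm{\alpha}e^{C^+y}$ from Theorem~\ref{th:expandedB1} on $[0,T_1]$; $\Psi_{n-1}$ from the induction hypothesis applied to $\theta_{T_1}\omega\in\Omega_{n-1}$, giving the expected value of $\boldsymbol{A}_{T_2^-}\in\mathfrak{Z}^-$; $D^{-+}$ from the down-up peak at $T_2$, via an analogue of the density argument in Lemma~\ref{lem:basicpropFRAP1}; a second $\Psi_{n-1}$ from $\theta_{T_2}\omega\in\Omega_{n-1}$; and $e^{C^-y}$ from Theorem~\ref{th:expandedB1} on $[T_3,\tau_-]$, where the orbit is forced to remain in $\mathfrak{Z}^-$ throughout the final descent because a down-up peak below $y$ would contradict the minimality of $p$. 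Multiplying these five factors and integrating over $y$ yields $\bm{\alpha}\int_0^\infty e^{C^+y}\Psi_{n-1}D^{-+}\Psi_{n-1}e^{C^-y}\dd y$, and adding the $\Omega_1$-contribution produces $\bm{\alpha}\Psi_n$. Uniqueness of $\Psi_n$ follows from Lemma~\ref{lem:bminimality}, and the standard boundedness and measurability checks mirror those sketched in the remark after Theorem~\ref{th:expandedB1}.

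The main obstacle will be cleanly extracting the $D^{-+}$ factor at the down-up peak, which is a joint specification of the random time $T_2$ and the random level $p$. My plan is to apply the strong Markov property at $T_2^-$ to reduce to a one-step conditional expectation of $\boldsymbol{A}_{T_2}$ given $\boldsymbol{A}_{T_2^-}\in\mathfrak{Z}^-$, and then invoke the density computation of Lemma~\ref{lem:basicpropFRAP1} to obtain $\boldsymbol{A}_{T_2^-}D^{-+}$ after averaging over the jump kernel. Throughout I would use the convention that $\boldsymbol{A}_{\tau_-}$ records the left limit just prior to any jump at $\tau_-$; under this convention each $\Psi_{n-1}$ output is $\mathfrak{Z}^-$-valued and $D^{-+}$ acts correctly on the right.
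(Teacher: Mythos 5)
Your proposal follows the same FP3-style decomposition as the paper's proof, and correctly identifies the key ingredients: induction on $n$, the disjoint split $\Omega_n = \Omega_1 \sqcup \Omega_n'$, the five-piece decomposition of a path in $\Omega_n'$, the strong Markov property at the peak times, Theorem~\ref{th:expandedB1} for the ascent/descent legs, Lemma~\ref{lem:basicpropFRAP1} for the peak, and Lemma~\ref{lem:bminimality} for uniqueness. The base case and the final formula~\eqref{eq:psiintegral} are reproduced exactly. This is the right approach in outline.

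The genuine gap is in your choice of decomposition points. You split the path at the \emph{random} level $p$ itself, i.e.\ at $T_1 = p$, $T_2 = T_1 + \tau_-\circ\theta_{T_1}$, $T_3 = T_2 + \tau_-\circ\theta_{T_2}$, and you acknowledge the resulting degeneracy: for the shifted path $\theta_{T_1}\omega$, the downcrossing $\tau_-\circ\theta_{T_1}$ occurs at precisely the level (and instant) where the orbit jumps from $\mathfrak{Z}^-$ to $\mathfrak{Z}^+$. Under the c\`adl\`ag convention used throughout the paper, $\bm{A}_{T_2}$ is the post-jump value in $\mathfrak{Z}^+$, so the inductive hypothesis (whose output is $\mathfrak{Z}^-$-valued) cannot be applied directly, and the jump map $\bm{A}_{T_2^-}\mapsto \bm{A}_{T_2^-}D^{-+}/(\bm{A}_{T_2^-}D^{-+}\bm{1})$ is nonlinear, so one cannot simply post-multiply the mean $\mathds{E}[\bm{A}_{T_2^-}]$ by $D^{-+}$. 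Your ad hoc ``left-limit'' convention for $\bm{A}_{\tau_-}$ and vague ``averaging over the jump kernel'' do not resolve this: the $D^{-+}$ factor in~\eqref{eq:psiintegral} must arise from a density argument over an infinitesimal interval, which is not what your setup delivers. The paper avoids the degeneracy entirely by conditioning on $\{p\in(y-\dd y,y)\}$ for a \emph{fixed} level $y$ strictly above $p$, using the five stopping times $S_1 = y$, $S_2 = S_1 + \tau_-\circ\theta_{S_1}$, $S_3 = S_2 + \tau_+\circ\theta_{S_2}$, $S_4 = S_3 + \tau_-\circ\theta_{S_3}$, $S_5 = S_4 + y$. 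Then $\theta_{S_1}\omega$ downcrosses level $0$ (at time $S_2$) without any jump, so the inductive hypothesis applies cleanly with output in $\mathfrak{Z}^-$; the peak is isolated in the separate infinitesimal event $B_3 = \{\exists s\in(S_2,S_2+\dd y): \bm{A}_{s^-}\in\mathfrak{Z}^-,\ \bm{A}_s\in\mathfrak{Z}^+\}$, from which the factor $D^{-+}\dd y$ falls out directly via~\eqref{eq:auxdensity1}. You should adopt this shifted conditioning on the fixed level $y$ rather than on $p$ itself; with that change your argument becomes the paper's.
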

\begin{proof} We prove by induction.

{\bf Case $n=1$.} Let $\rho_+$ be as in Lemma \ref{lem:basicpropFRAP1}, so that $\rho_+$ corresponds to the epoch at which the first transition from $\mathfrak{Z}^+$ to $\mathfrak{Z}^-$ occurs. Fix $y\ge 0$ and define the stopping times
  \begin{align*}
  Z_1 &:= y,\qquad
  Z_2 := Z_1 + \tau_-\circ\theta_{Z_1},\qquad
  Z_3 := Z_2 + y.
  \end{align*}

  Then, $\Omega_1\cap\left\{\rho_+\in (y, y +\dd y)\right\} = E_1\cap E_{2}\cap E_3$ a.s.,   where
  \begin{align*}
  E_1&=\{\bm{A}_s\in\mathfrak{Z}^+\mbox{ for all }s\in[0,Z_1)\},\\
  E_{2}&=\{\mbox{$\exists s \in (Z_1, Z_1 + \dd y)$ such that $\bm{A}_{s^-}\in\mathfrak{Z}^+$ and $\bm{A}_s\in\mathfrak{Z}^-$}\},\\
  E_3&=\{\bm{A}_s\in\mathfrak{Z}^-\mbox{ for all }s\in[Z_2,Z_3)\}.
  \end{align*}

  Moreover, on $\Omega_1\cap\{\rho_+\in (y, y +\dd y)\}$ we have that $Z_3=\tau_-$. Thus,
\begin{align*}
  \mathds{E}_{\bm{\alpha}}&\left[\bm{A}_{\tau_-}\Indic{\Omega_1\cap\{\rho_+\in (y, y +\dd y)\}}\right]\\
  & = \mathds{E}_{\bm{\alpha}}\left[\bm{A}_{Z_3}\Indic{E_1\cap E_{2}\cap E_3}\right] = \mathds{E}_{\bm{\alpha}}\left[\mathds{E}_{\bm{\alpha}}\left[\left.\bm{A}_{Z_3}\Indic{E_3}\;\right| \mathcal{F}_{Z_2}\right]\Indic{E_1\cap E_{2}}\right]\\
& = \mathds{E}_{\bm{\alpha}}\left[\left(\bm{A}_{Z_2}e^{C^- y}\right)\Indic{E_1\cap E_{2}}\right] = \mathds{E}_{\bm{\alpha}}\left[\mathds{E}_{\bm{\alpha}}\left[\left.\bm{A}_{Z_2}\Indic{E_{2}}\;\right| \mathcal{F}_{Z_1}\right]\Indic{E_1}\right]e^{C^- y}\\
& = \mathds{E}_{\bm{\alpha}}\left[\left(\bm{A}_{Z_1}D^{+-} \dd y\right)\Indic{E_1}\right]e^{C^- y} = \bm{\alpha}e^{C^+ y}D^{+-} e^{C^- y}\dd y,
  \end{align*}
  where the strong Markov property was used in the third and fifth equalities, Theorem \ref{th:expandedB1} was used in the third and last equalities, and (\ref{eq:auxdensity1}) in the fifth equality. Since this holds for all $\bm{\alpha}\in\mathfrak{Z}^+$, then Lemma \ref{lem:bminimality} implies that
  \[\Psi_1 = \int_{0}^\infty e^{C^+ y}D^{+-}e^{C^- y}\dd y\]
  is the only solution to (\ref{eq:linpsin}) for $n=1$.

{\bf Inductive part.} Suppose \eqref{eq:linpsin} is true for some $n\ge 1$. Fix $y > 0$. Define the stopping times
 \begin{align*}
  S_1&:= y,\quad
  S_2 := S_1 + \tau_-\circ\theta_{S_1},\quad
  S_3 := S_2 + \tau_+\circ\theta_{S_2},\quad
  S_4 := S_3 + \tau_-\circ\theta_{S_3},\quad
  S_5 := S_4 + y.
  \end{align*} 

 Note that
\begin{align}
	\label{eq:concatenationaux2}
	(\Omega_{n+1}\setminus \Omega_1)\cap\{p \in (y-\dd y, y)\}=B_1\cap B_2\cap B_{3}\cap B_4\cap B_5 \qquad\mbox{a.s.},
	\end{align}
where
\begin{align*}
B_1 & = \{\bm{A}_s\in\mathfrak{Z}^+\mbox{ for all }s \in [0,S_1)\},\\
B_2 & = \{\mathcal{X}\circ\theta_{S_1}\in\Omega_{n}\},\\
B_{3} & =\{\mbox{$\exists s\in (S_2, S_2 + \dd y)$ such that $\bm{A}_{s^-}\in\mathfrak{Z}^-$ and $\bm{A}_s\in\mathfrak{Z}^+$}\},\\
B_4 & = \{\mathcal{X}\circ\theta_{S_3}\in\Omega_{n}\},\\
B_5 & = \{\bm{A}_s\in\mathfrak{Z}^-\mbox{ for all }s \in [S_4,S_5)\}.
\end{align*}
Moreover, on $(\Omega_{n+1}\setminus \Omega_1)\cap\{p\in (y-\dd y, y)\}$ we have that $S_5=\tau_-$. Then,
\begin{align}
 \mathds{E}_{\bm{\alpha}}&\left[\bm{A}_{\tau_-}\Indic{(\Omega_{n+1}\setminus \Omega_1)\cap\{p\in (y-\dd y, y )\}}\right]\nonumber\\
& =  \mathds{E}_{\bm{\alpha}}\left[\bm{A}_{S_5}\Indic{\cap_{i=1}^5 B_i}\right] =  \mathds{E}_{\bm{\alpha}}\left[\mathds{E}_{\bm{\alpha}}\left[\left.\bm{A}_{S_5}\Indic{B_5}\;\right|\mathcal{F}_{S_4}\right]\Indic{\cap_{i=1}^4 B_i}\right]\nonumber\\
& = \mathds{E}_{\bm{\alpha}}\left[\left(\bm{A}_{S_4}e^{C^-y}\right)\Indic{\cap_{i=1}^4 B_i}\right] =  \mathds{E}_{\bm{\alpha}}\left[\mathds{E}_{\bm{\alpha}}\left[\left.\bm{A}_{S_4}\Indic{B_4}\;\right|\mathcal{F}_{S_3}\right]\Indic{\cap_{i=1}^3 B_i}\right]e^{C^-y}\nonumber\\
& = \mathds{E}_{\bm{\alpha}}\left[\left(\bm{A}_{S_3}\Psi_{n}\right)\Indic{\cap_{i=1}^3 B_i}\right]e^{C^-y} =  \mathds{E}_{\bm{\alpha}}\left[\mathds{E}_{\bm{\alpha}}\left[\left.\bm{A}_{S_3}\Indic{B_{3}}\;\right|\mathcal{F}_{S_2}\right]\Indic{\cap_{i=1}^2 B_i}\right]\Psi_{n}e^{C^-y}\nonumber\\
& = \mathds{E}_{\bm{\alpha}}\left[\left(\bm{A}_{S_2}D^{-+}\dd y\right)\Indic{\cap_{i=1}^2 B_i}\right]\Psi_{n}e^{C^-y} =  \mathds{E}_{\bm{\alpha}}\left[\mathds{E}_{\bm{\alpha}}\left[\left.\bm{A}_{S_2}\Indic{B_2}\;\right|\mathcal{F}_{S_1}\right]\Indic{B_1}\right]D^{-+}\Psi_{n}e^{C^-y}\dd y\nonumber\\
& = \mathds{E}_{\bm{\alpha}}\left[\left(\bm{A}_{S_1}\Psi_{n}\right)\Indic{B_1}\right]D^{-+}\Psi_{n}e^{C^-y}\dd y = \bm{\alpha}e^{C^+y}\Psi_{n}D^{-+}\Psi_{n}e^{C^-y}\dd y, \label{eq:Psiintaux5}
\end{align}
where the strong Markov property was used in the third, fifth, seventh and ninth equalities, Theorem \ref{th:expandedB1} in the third and last equalities, the induction hypothesis in the fifth and ninth equalities, and (\ref{eq:auxdensity1}) in the seventh equality. Thus,
\begin{align}
	\label{eq:psinaux4}
\mathds{E}_{\bm{\alpha}}&\left[\bm{A}_{\tau_-}\Indic{\Omega_{n+1}}\right] = \bm{\alpha}\int_0^\infty e^{C^+y}(D^{+-} + \Psi_{n}D^{-+}\Psi_{n}e^{C^-y}) \dd y = \bm{\alpha}\Psi_{n+1}.
\end{align}
As (\ref{eq:psinaux4}) holds for all $\bm{\alpha}\in\mathfrak{Z}$, $\Psi_{n+1}$ is uniquely determined by (\ref{eq:psiintegral}), which completes the proof.
\end{proof}
In Theorem \ref{th:Psinintegral3} we provided a recursion and an integral equation for $\Psi_n$. Corollary~\ref{cor:algorithmpsi} sets $\Psi_n$ as the solution of a Sylvester equation, and links $\{\Psi_n\}_{n\ge 1}$ to the probability of $\{R_t\}_{t\ge 0}$ ever returning to $0$. 

\begin{Corollary}
	\label{cor:algorithmpsi}
The matrices $\{\Psi_n\}_{n\ge 0}$ (with $\Psi_0:=0$) satisfy the recursive equation
\begin{align}
	\label{eq:FP3alg}
C^+\Psi_{n+1} + \Psi_{n+1}C^-= - D^{+-}-\Psi_{n}D^{-+}\Psi_{n},\quad n\ge 0.
\end{align}
%
Furthermore,
\begin{align}
	\mathds{E}_{\bm{\alpha}}\left[\bm{A}_{\tau_-}\Indic{\tau_-<\infty}\right] & = \bm{\alpha}\Psi, \label{eq:masterpsi3}\\
\mathds{P}_{\bm{\alpha}}(\tau_-< \infty) &= \bm{\alpha}\Psi\bm{1}.\label{eq:ruinprob1}
\end{align} 
where $\Psi := \lim_{n\rightarrow\infty}\Psi_n$. 
\end{Corollary}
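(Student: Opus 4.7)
The plan is to address the three claims of the corollary in sequence, using the integral representation of $\Psi_n$ from Theorem~\ref{th:Psinintegral3} together with the spectral facts already established elsewhere in the paper.

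For the Sylvester recursion \eqref{eq:FP3alg}, I would differentiate the integrand in \eqref{eq:psiintegral}. Writing $M_n := D^{+-}+\Psi_n D^{-+}\Psi_n$, one has
\[
\frac{d}{dy}\bigl(e^{C^+ y} M_n e^{C^- y}\bigr) = C^+ e^{C^+ y} M_n e^{C^- y} + e^{C^+ y} M_n e^{C^- y} C^-.
\]
Integrating from $0$ to $\infty$, and using the fact that every eigenvalue of $C^\pm$ has strictly negative real part (established inside the proof of Lemma~\ref{lem:basicpropFRAP1}, so that $e^{C^\pm y}\to 0$ at infinity), gives $C^+ \Psi_{n+1} + \Psi_{n+1} C^- = -M_n$, which is exactly \eqref{eq:FP3alg}.

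For \eqref{eq:masterpsi3} I first need to justify the existence of $\Psi := \lim_{n\to\infty}\Psi_n$. The sets $\{\Omega_n\}_{n\ge 1}$ are nested with $\Omega_n \uparrow \Omega_-$, so $\Indic{\Omega_n} \uparrow \Indic{\Omega_-}$ pathwise. Because $\mathfrak{Z}^-$ is bounded (Condition~\ref{cond:boundedness}, carried over to the general setting) and $\bm{A}_{\tau_-}\in\mathfrak{Z}^-$ on $\Omega_-$, the Bounded Convergence Theorem yields
\[
\bm{\alpha}\Psi_n \;=\; \mathds{E}_{\bm{\alpha}}\!\left[\bm{A}_{\tau_-}\Indic{\Omega_n}\right] \;\xrightarrow[n\to\infty]{}\; \mathds{E}_{\bm{\alpha}}\!\left[\bm{A}_{\tau_-}\Indic{\tau_-<\infty}\right]
\]
for every $\bm{\alpha}\in\mathfrak{Z}^+$. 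Since this scalar-valued convergence holds for every $\bm{\alpha}\in\mathfrak{Z}^+$ and Lemma~\ref{lem:bminimality} supplies $\eta^+$ linearly independent vectors in $\mathfrak{Z}^+$, I can upgrade it to entrywise convergence of the matrices $\Psi_n$ to a unique limit $\Psi$, which immediately gives \eqref{eq:masterpsi3}.

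For \eqref{eq:ruinprob1} I would simply postmultiply \eqref{eq:masterpsi3} by $\bm{1}$: since every element of $\mathfrak{Z}^-$ lies on the hyperplane $\{\bm{x}:\bm{x}\bm{1}=1\}$, we have $\bm{A}_{\tau_-}\bm{1}=1$ on $\{\tau_-<\infty\}$, and so
\[
\bm{\alpha}\Psi\bm{1} = \mathds{E}_{\bm{\alpha}}\!\left[\bm{A}_{\tau_-}\bm{1}\,\Indic{\tau_-<\infty}\right] = \mathds{P}_{\bm{\alpha}}(\tau_-<\infty).
\]
The only genuinely delicate step is the convergence argument that produces $\Psi$; the integral-to-Sylvester manipulation is a routine calculus step once the spectral condition on $C^\pm$ is in hand, and the ruin probability formula is then immediate from the normalisation $\bm{A}_{\tau_-}\bm{1}=1$.
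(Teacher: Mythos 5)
Your proposal is correct and follows essentially the same route as the paper's own proof: the paper establishes the Sylvester recursion by premultiplying the integral formula \eqref{eq:psiintegral} by $C^+$ and integrating by parts (your version differentiates the full product $e^{C^+y}M_n e^{C^-y}$ and integrates, which is the same computation), then obtains $\Psi$ via Bounded Convergence over the nested exhaustion $\Omega_n\uparrow\Omega_-$ together with Lemma~\ref{lem:bminimality}, and finally postmultiplies by $\bm{1}$ using $\bm{A}_{\tau_-}\bm{1}=1$. The only cosmetic slips are that the relevant boundedness condition in the general setting is the one in Section~\ref{sec:generalmodel} (not Condition~\ref{cond:boundedness}, which is stated for the simple process), and ``scalar-valued convergence'' should read ``row-vector-valued''; neither affects the argument.
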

\begin{proof} In the proof of (\ref{eq:meanafterexiting1}) we checked that the eigenvalues of maximal real part of $C^-$ and $C^+$ both have strictly negative real parts. Thus, premultiplying (\ref{eq:psiintegral}) by $C^+$ and integrating by parts we obtain
\begin{align*}
C^+\Psi_{n+1}  & = \int_0^\infty C^+ e^{C^+y}(D^{+-} + \Psi_{n}D^{-+}\Psi_{n})e^{C^-y} \dd y\\
& = \left[e^{C^+y}(D^{+-} + \Psi_{n}D^{-+}\Psi_{n})e^{C^-y}\right]_0^\infty  - \int_0^\infty e^{C^+y}(D^{+-} + \Psi_{n}D^{-+}\Psi_{n})e^{C^-y} C^-\dd y\\
& = [0 - (D^{+-} + \Psi_{n}D^{-+}\Psi_{n})] - \Psi_{n+1} C^-,\end{align*}
so that (\ref{eq:FP3alg}) follows. Using the Bounded Convergence Theorem and the fact that $\Omega_-=\cup_{n=1}^\infty \Omega_n$,
\begin{align}
	\label{eq:psiaux7}
		\mathds{E}_{\bm{\alpha}}\left[\bm{A}_{\tau_-}\Indic{\tau_-<\infty}\right] = \lim_{n\rightarrow\infty}\bm{\alpha}\Psi_n.
\end{align}
Since (\ref{eq:psiaux7}) holds for all $\bm{\alpha}\in\mathfrak{Z}^+$,  Lemma \ref{lem:bminimality} implies $\lim_{n\rightarrow\infty}\Psi_n =: \Psi$ exists and satisfies (\ref{eq:masterpsi3}). Equation (\ref{eq:ruinprob1}) follows by noticing that $\bm{A}_t\bm{1} = 1$ for all $t\ge 0$, a consequence of the affine nature of $\mathfrak{Z}^+$.
\end{proof}
The following is a property of the matrix $\Psi$ in the case $\mathds{P}(\Omega_-)=1$.
\begin{Proposition}
If $\mathds{P}_{\bm{\alpha}}(\tau_-<\infty)= 1$ for all $\bm{\alpha}\in\mathfrak{Z}^+$, then $\Psi\bm{1} = \bm{1}$.
\end{Proposition}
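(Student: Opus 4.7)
The plan is to combine the identity from Corollary \ref{cor:algorithmpsi} with the minimality/linear-independence property from Lemma \ref{lem:bminimality}. By \eqref{eq:ruinprob1}, the hypothesis $\mathds{P}_{\bm{\alpha}}(\tau_- < \infty) = 1$ gives $\bm{\alpha}\Psi\bm{1} = 1$ for every $\bm{\alpha} \in \mathfrak{Z}^+$. Since $\mathfrak{Z}^+$ lies in the affine hyperplane $\{\boldsymbol{x}:\boldsymbol{x}\bm{1} = 1\}$, we also have $\bm{\alpha}\bm{1} = 1$. Subtracting gives
\[
\bm{\alpha}(\Psi\bm{1} - \bm{1}) = 0 \qquad \text{for all } \bm{\alpha} \in \mathfrak{Z}^+.
\]

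Next I would invoke Lemma \ref{lem:bminimality}, which guarantees that $\mathfrak{Z}^+$ contains $\eta^+$ linearly independent row vectors. These vectors span $\mathds{R}^{\eta^+}$, so the only column vector $\bm{v} \in \mathds{R}^{\eta^+}$ satisfying $\bm{\alpha}\bm{v} = 0$ for every $\bm{\alpha} \in \mathfrak{Z}^+$ is $\bm{v} = \bm{0}$. Applying this to $\bm{v} = \Psi\bm{1} - \bm{1}$ yields $\Psi\bm{1} = \bm{1}$, as required.

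There is no real obstacle here: the argument is purely algebraic once the probabilistic interpretation of $\Psi$ from Corollary \ref{cor:algorithmpsi} and the dimensional richness of $\mathfrak{Z}^+$ from Lemma \ref{lem:bminimality} are in hand. The only subtlety worth emphasising in the write-up is that the identity $\bm{\alpha}\bm{1} = 1$ is a consequence of the affine structure of $\mathfrak{Z}^+$, which converts the probabilistic assumption into the desired linear-algebraic equality.
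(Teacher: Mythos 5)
Your argument is correct and is essentially the same as the paper's: both use \eqref{eq:ruinprob1} to get $\bm{\alpha}\Psi\bm{1}=1=\bm{\alpha}\bm{1}$ on $\mathfrak{Z}^+$ and then invoke Lemma~\ref{lem:bminimality} to conclude $\Psi\bm{1}=\bm{1}$. Your write-up simply spells out the subtraction and linear-independence step that the paper leaves implicit.
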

\begin{proof}
%
	
Since $\bm{\alpha}{\Psi}\bm{1}=1=\bm{\alpha}\bm{1}$ for all $\bm{\alpha}\in\mathfrak{Z}^+$, then Lemma \ref{lem:bminimality} implies $\Psi\bm{1} = \bm{1}$.
\end{proof}

\subsection{Downward record process}
	\label{sec:minimum}

For $x \geq 0$, define
%
%
$\tau^x_{-} :=\inf\{t > 0: R_t = -x, \bm{A}_t\in\mathfrak{Z}^-\},$
%
the first time the level process $\mathcal{R}$ downcrosses level $-x\le 0$. 
Define the \emph{downward record process} $\{(\ell_x, \bm{O}_x)\}_{x\ge 0}$ by
\begin{align*} 
	(\ell_x, \bm{O}_x) = \left\{\begin{array}{ccc}(R_{\tau_{-}^x},\;\bm{A}_{\tau_{-}^x})&\mbox{if}& \tau_{-}^x <\infty\\
	(\infty,\Delta)&\mbox{if}&\tau_{-}^x =\infty,
	\end{array}\right.
\end{align*} 
where $\Delta$ is some isolated cemetery state. If $\bm{O}_x\neq\Delta$ for $x\ge 0$, the vector $\bm{O}_x$ corresponds to the orbit value when the level process downcrosses $\ell_x=-x$ for the first time. We can see that $\{\bm{O}_x\}_{x\ge 0}$ is a (possibly absorbing) concatenation of orbits with state space $\mathfrak{Z}^-\cup\{\Delta\}$. In the following we compute the average value of $\bm{O}_x$ on the event that $\bm{O}_x\neq\Delta$.

\begin{theorem}
	\label{th:Gxbeta}
For all $\bm{\beta}\in\mathfrak{Z}^-$ and $x\ge 0$,
\begin{align}
	\mathds{E}_{\bm{\beta}}\left[\bm{O}_x\Indic{\tau_{-}^x <\infty}\right]=\bm{\beta}e^{(C^{-} +D^{-+}\Psi) x}.
\end{align}
\end{theorem}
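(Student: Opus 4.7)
The plan is to show that the map $x\mapsto G(x)$, where $G(x)$ is the unique matrix satisfying $\mathds{E}_{\bm{\beta}}[\bm{O}_x\Indic{\tau^x_-<\infty}] = \bm{\beta}G(x)$ for all $\bm{\beta}\in\mathfrak{Z}^-$, obeys the renewal-type integral equation
\[
G(x) = e^{C^- x} + \int_0^x e^{C^- y}\,D^{-+}\Psi\,G(x-y)\,\dd y,
\]
and then to conclude via Theorem \ref{th:explicitphi} (with $A = C^-$, $B = D^{-+}\Psi$) that $G(x) = e^{(C^-+D^{-+}\Psi)x}$. Existence and uniqueness of such a matrix $G(x)$ is guaranteed by Lemma \ref{lem:bminimality}, and since $\bm{O}_x\in\mathfrak{Z}^-$ on $\{\tau_-^x<\infty\}$ and $\mathfrak{Z}^-$ is bounded, the family $\{\bm{\beta}G(x):\bm{\beta}\in\mathfrak{Z}^-,\,x\in[0,X]\}$ is uniformly entrywise-bounded for each $X>0$; Corollary \ref{cor:bminimality} then furnishes the boundedness hypothesis needed by Theorem \ref{th:explicitphi}.

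To derive the integral equation, I would partition the event $\{\tau_-^x<\infty\}$ according to the first exit time $\rho_-$ of $\bm{\mathcal{A}}$ from $\mathfrak{Z}^-$. Since $n^0=0$ and the level process moves at rates $\pm 1$, while $\bm{A}_s\in\mathfrak{Z}^-$ for all $s\in[0,x]$ we have $R_x=-x$ and $\bm{O}_x=\bm{A}_x$; Theorem \ref{th:expandedB1} then contributes
\[
\mathds{E}_{\bm{\beta}}[\bm{A}_x\Indic{\bm{A}_s\in\mathfrak{Z}^-\forall s\in[0,x]}] = \bm{\beta}e^{C^- x}.
\]
On the complementary event $\{\rho_-\in(y,y+\dd y),\bm{A}_{\rho_-}\in\mathfrak{Z}^+\}$ with $y<x$, the level is at $-y$ when the orbit first enters $\mathfrak{Z}^+$; for $\tau_-^x$ to be finite the level must first return to $-y$ (so that $R_{\tau_-^x}=-x$ can be attained after travelling further down a distance $x-y$). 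By Lemma \ref{lem:basicpropFRAP1} (Equation \eqref{eq:auxdensity1}), the expected orbit at this entry is $\bm{\beta}e^{C^- y}D^{-+}\dd y$. Applying the strong Markov property at $\rho_-$, followed by Corollary \ref{cor:algorithmpsi} for the first return to level $-y$ with orbit in $\mathfrak{Z}^-$, and then the strong Markov property again at that return time (using the definition of $G$ for the remaining descent of $x-y$), this event contributes $\bm{\beta}e^{C^- y}D^{-+}\Psi\,G(x-y)\,\dd y$.

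Summing the two contributions and invoking the uniqueness clause of Lemma \ref{lem:bminimality} yields the asserted integral equation. Applying Theorem \ref{th:explicitphi} then gives $G(x)=e^{(C^-+D^{-+}\Psi)x}$, which is the claim. The main obstacle is the careful handling of the chain of strong-Markov applications at the successive stopping times $\rho_-$ and $\rho_-+\tau_-\circ\theta_{\rho_-}$; this is essentially the same type of concatenation argument used in the proof of Theorem \ref{th:Psinintegral3}, and as noted in the remark following Theorem \ref{th:expandedB1}, the uniqueness/boundedness steps parallel those already established and can be abbreviated.
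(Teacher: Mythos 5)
Your argument is correct and is essentially the same as the paper's, but there is one point worth flagging. The paper does not set up the integral equation for $G(x)$ directly; instead it defines the downward record count $V_x$ (the number of record downcrossings up to level $-x$), proves by induction on $n$ that $\mathds{E}_{\bm{\beta}}\left[\bm{A}_{\tau_-^x}\Indic{V_x=n}\right]=\bm{\beta}\Phi_n(x)$ for unique continuous matrices $\Phi_n(\cdot)$, obtains the convolution recursion $\Phi_n(x)=\int_0^x e^{C^-y}D^{-+}\Psi\Phi_{n-1}(x-y)\,\dd y$ with $\Phi_1(x)=e^{C^-x}$, and only then sums over $n$ (Fubini) to arrive at your renewal equation for $\Phi(x)=\sum_n\Phi_n(x)$, followed by Theorem~\ref{th:explicitphi}. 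The decomposition event you use --- first exit $\rho_-$ of $\orbit$ from $\mathfrak{Z}^-$ --- is exactly the paper's first record level $\sigma_1$, and the chain strong Markov at $\rho_-$ / Corollary~\ref{cor:algorithmpsi} at the first return / strong Markov again at the return time is the same as the paper's use of $S_1,S_2,S_3,S_4$. So the two routes produce the same integral equation.

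The one place where your write-up glosses over something the paper's induction is actually doing work for: you write that ``Existence and uniqueness of such a matrix $G(x)$ is guaranteed by Lemma~\ref{lem:bminimality}.'' Lemma~\ref{lem:bminimality} gives \emph{uniqueness} only. The \emph{existence} of a matrix $G(x)$ satisfying $\mathds{E}_{\bm{\beta}}[\bm{O}_x\Indic{\tau_-^x<\infty}]=\bm{\beta}G(x)$ for \emph{all} $\bm{\beta}\in\mathfrak{Z}^-$ is equivalent to the claim that this conditional expectation is an affine function of the initial orbit, and that is precisely what the paper's induction on $V_x$ establishes, level by level (the base case inherits linearity from Theorem~\ref{th:expandedB1}, the inductive step preserves it). Without that, writing $\mathds{E}_{\bm{A}_{\tau_-}}[\bm{O}_{x-y}\Indic{\tau_-^{x-y}<\infty}]=\bm{A}_{\tau_-}G(x-y)$ inside the outer expectation is exactly the linearity you have not yet justified, so the derivation of the renewal equation has a mild circularity if read literally. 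This is fixable by running the same induction (or by citing the linearity as a general structural property of orbit processes), and your boundedness argument via Corollary~\ref{cor:bminimality} and the appeal to Theorem~\ref{th:explicitphi} are both sound; but as stated, ``existence from Lemma~\ref{lem:bminimality}'' overclaims.
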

\begin{proof}
Let $\kappa_0 := 0$, and for $n \geq 1$ define
\begin{align*} 
  \kappa_n := \inf\{t \geq \kappa_{n - 1}: R_t = \inf_{s \geq 0}R_s, \mbox{ and } \bm{A}_t \in \mathfrak{Z}^+\};
\end{align*} 
$\{\kappa_n\}_{n \geq 0}$ form the successive time epochs at which the infimum process $\{\inf_{s \geq 0} R_s\}_{s \geq 0}$ stops decreasing. For $n \geq 0$, let $-\sigma_n := R_{\kappa_n}$, the values of $\mathcal{R}$ at these epochs. Note that the sequence $\{\kappa_n + \tau_{-}\circ\theta_{\kappa_n}\}_{n \geq 0}$ form the successive time epochs at which the infimum process $\{\inf_{s \geq 0} R_s\}_{s \geq 0}$ starts decreasing, and $\{\kappa_{n + 1} - \kappa_n - \tau_{-}\circ\theta_{\kappa_n}\}_{n \geq 0}$ are the lengths of successive intervals in time at which $\mathcal{R}$ attains a new local minima. By Assumption~\ref{assump:pm1}, for $n \geq 0$
\begin{align*} 
	\sigma_n - \sigma_{n - 1} = \kappa_{n + 1} - \kappa_n - \tau_{-}\circ\theta_{\kappa_n}.
\end{align*}  
See Figure~\ref{fig:downwardFRAP} for an illustration.
 
%
\begin{figure}[h]
	\begin{center} 
	\includegraphics[scale=0.44]{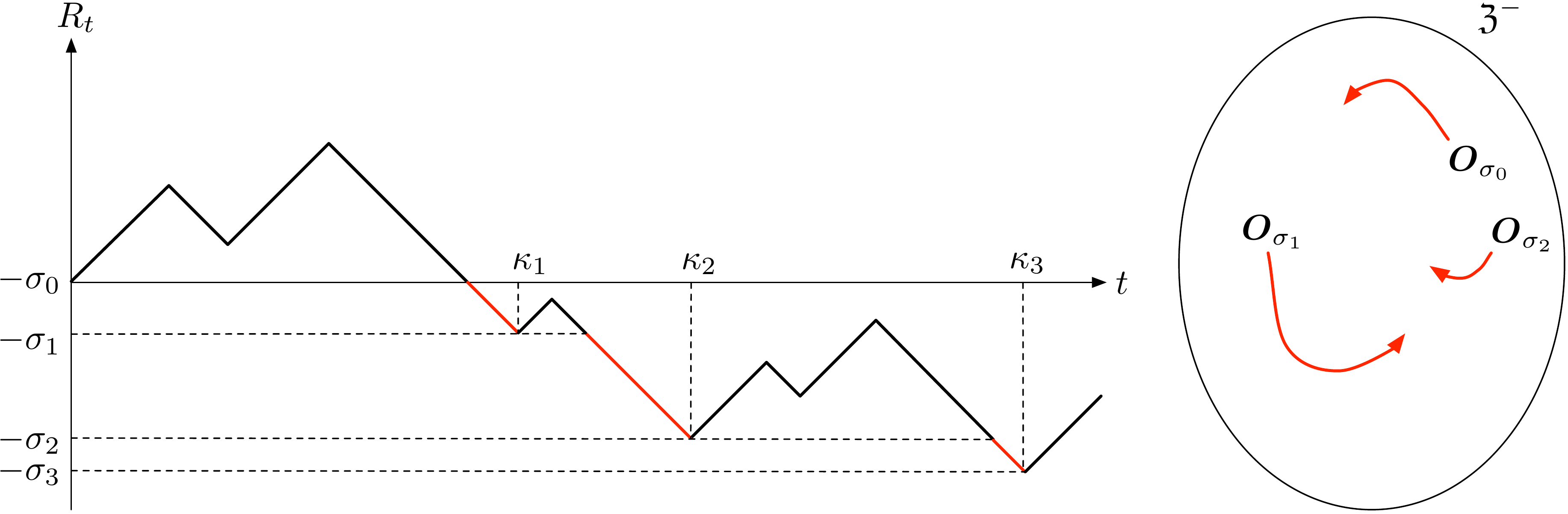}
	\end{center} 
    \caption{An example of the downward record process associated with $\mathcal{X}$. Left: Downward record levels are shown in red. Right: The concatenation $\{\bm{O}_{x}\}_{x\ge 0}$ of the corresponding orbit segments.}
    \label{fig:downwardFRAP}
    \end{figure}

For $x \geq 0$, let 
 \begin{align}
 	\label{eq:recorddown1} 
	%
	V_x :=\inf\{n\ge 1: \sigma_n > x\};
\end{align} 
	{we call $V_x$ the number of \emph{record downcrossings up to level $-x$}}. First, we prove by induction that for each $n\ge 1$, there exists a unique continuous matrix function $\Phi_n(\cdot)$ such that 
\begin{align}
	\label{eq:Phistarn}
	\mathds{E}_{\bm{\beta}}\left[\bm{A}_{\tau_{-}^x}\Indic{V_x=n}\right]=\bm{\beta}\Phi_n(x).
\end{align}

 {\bf Case $n=1$.} On $\{V_x=1\}$, \;$\tau_-^x=x$ by (\ref{eq:leveltime1}). Thus,
\begin{align*}
\mathds{E}_{\bm{\beta}}&\left[\bm{A}_{\tau_{-}^x}\Indic{V_x=1}\right] = \mathds{E}_{\bm{\beta}}\left[\bm{A}_x\Indic{\bm{A}_s\in\mathfrak{Z}^-}\;\forall s\in[0,x]\right] = \bm{\beta}e^{C^-x},
\end{align*}
so that (\ref{eq:Phistarn}) holds with $\Phi_1(x)=e^{C^- x}$. Uniqueness follows from Lemma \ref{lem:bminimality}.

 {\bf Inductive part.} Suppose (\ref{eq:Phistarn}) holds for some $n \; \ge 1$. Fix $y \in [0,x]$ and define the stopping times
 \begin{align*}
  S_1& := y,\quad
  S_2 := S_1 + \tau_+\circ \theta_{S_1},\quad
  S_3 := S_2 + \tau_-\circ\theta_{S_2},\quad \\
  S_4 & := S_3 + \inf\{t>0 : R_t\circ\theta_{S_3} = -(x-y)\}.
  \end{align*} 

Then, $\{V_x=n+1, \sigma_1 \in (y, y+\dd y )\}=B_1\cap B_{2}\cap B_3\cap B_4$ a.s., where
\begin{align*}
B_1 & = \{\bm{A}_s\in\mathfrak{Z}^-\mbox{ for all }s \in [0,S_1)\},\\
B_{2} & =\{\mbox{$\exists s\in (S_1, S_1 + \dd y)$ such that $\bm{A}_{s^-}\in\mathfrak{Z}^-$ and $\bm{A}_s\in\mathfrak{Z}^+$}\},\\
B_3 & = \{\mathcal{X}\circ\theta_{S_2}\in\Omega_-\},\\
B_4 & = \{V_{x-y}\circ\theta_{S_3}=n\}.
\end{align*}
Moreover, on $\{V_x=n+1, \sigma_1\in (y, y+\dd y )\}$ we have that $S_4=\tau_-^x$. Then,
\begin{align}
 \mathds{E}_{\bm{\beta}}&\left[\bm{A}_{\tau_{-}^x}\Indic{V_x=n+1,\; \sigma_1\in (-y - \dd y, -y )}\right]\nonumber\\
& =  \mathds{E}_{\bm{\beta}}\left[\bm{A}_{S_4}\Indic{\cap_{i=1}^4 B_i}\right] \nonumber \\
& =  \mathds{E}_{\bm{\beta}}\left[\mathds{E}_{\bm{\beta}}\left[\left.\bm{A}_{S_4}\Indic{B_4}\;\right|\mathcal{F}_{S_3}\right]\Indic{\cap_{i=1}^3 B_i}\right]\nonumber \\
& = \mathds{E}_{\bm{\beta}}\left[\left(\bm{A}_{S_3}\Phi_n(x-y)\right)\Indic{\cap_{i=1}^3 B_i}\right] \nonumber  \quad \mbox{(by the induction hypothesis)}  \\
& =  \mathds{E}_{\bm{\beta}}\left[\mathds{E}_{\bm{\beta}}\left[\left.\bm{A}_{S_3}\Indic{B_3}\;\right|\mathcal{F}_{S_2}\right]\Indic{\cap_{i=1}^2 B_i}\right]\Phi_n(x-y)\nonumber   \\
& = \mathds{E}_{\bm{\beta}}\left[\left(\bm{A}_{S_2}\Psi\right)\Indic{\cap_{i=1}^2 B_i}\right]\Phi_n(x-y) \nonumber \quad  \mbox{(by Corollary \ref{cor:algorithmpsi})} \\
& =  \mathds{E}_{\bm{\beta}}\left[\mathds{E}_{\bm{\beta}}\left[\left.\bm{A}_{S_2}\Indic{B_2}\;\right|\mathcal{F}_{S_1}\right]\Indic{B_1}\right]\Psi\Phi_n(x-y)\nonumber \quad \mbox{(by (\ref{eq:auxdensity1}))}\\
& = \mathds{E}_{\bm{\beta}}\left[\left(\bm{A}_{S_1}D^{-+}\dd y\right)\Indic{ B_1}\right]\Psi\Phi_n(x-y) \nonumber \\
&  = \bm{\beta}e^{C^-y}D^{-+}\Psi\Phi_n(x-y)\dd y \nonumber
\end{align}
%
by Theorem \ref{th:expandedB1}. 

Thus, (\ref{eq:Phistarn}) recursively holds for $n\ge 2$ with
\begin{align}\label{eq:Phinintegral}\Phi_{n}(x)=\int_0^x e^{C^-y}D^{-+}\Psi\Phi_{n-1}(x-y)\dd y,
\end{align}
which is unique by Lemma \ref{lem:bminimality}. Summing (\ref{eq:Phinintegral}) over $n\ge 0$ together with Fubini's Theorem, we obtain
\[
	\mathds{E}_{\bm{\beta}}\left[\bm{O}_x\Indic{\tau_-^x <\infty}\right]=\bm{\beta}\Phi(x)
	\]
for unique $\Phi(\cdot)$, which by Corollary \ref{cor:bminimality} is uniformly entrywise-bounded on compact intervals, and satisfies the equation
\[\Phi(x)=e^{C^-y} + \int_0^x e^{C^-y}D^{-+}\Psi\Phi(x-y)\dd y.\]
Theorem \ref{th:explicitphi} implies that $\Phi(x)=e^{(C^{-} +D^{-+}\Psi) x}$, which completes the proof.
\end{proof}

The following corollary shows how the mean value of $\bm{O}_x$ relates to the probability that the process $\{R_t\}_{t\ge 0}$ ever downcrosses level $-x\le 0$ given $\bm{A}_0\in\mathfrak{Z}^+$.

\begin{Corollary}\label{cor:ruin3}
For $\bm{\alpha}\in\mathfrak{Z}^+$,
\begin{align}
	\mathds{E}_{\bm{\alpha}}\left[\bm{O}_x\Indic{\tau_-^x <\infty}\right] & =\bm{\alpha}\Psi e^{(C^{-} +D^{-+}\Psi) x},  \\
	\label{eq:ruinprobx1}
\mathds{P}_{\bm{\alpha}}(\tau_-^x < \infty) & = \bm{\alpha}\Psi e^{(C^{-} +D^{-+}\Psi) x}\bm{1}.
\end{align}
\end{Corollary}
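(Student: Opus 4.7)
The plan is to decompose the event $\{\tau_-^x < \infty\}$ for $\bm{\alpha}\in\mathfrak{Z}^+$ into two successive phases: first, the level process must downcross level $0$ from above, which happens at time $\tau_-$; then, from the resulting state $\bm{A}_{\tau_-}\in\mathfrak{Z}^-$, the level process (started afresh at level $0$) must downcross level $-x$. This two-phase decomposition is exactly what is set up in Corollary~\ref{cor:algorithmpsi} (first phase) and Theorem~\ref{th:Gxbeta} (second phase), so the work is to glue them together using the strong Markov property.

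More precisely, I would first observe that for $\bm{\alpha}\in\mathfrak{Z}^+$, on the event $\{\tau_-^x<\infty\}$ we necessarily have $\tau_-<\infty$ and the shifted path $\mathcal{X}\circ\theta_{\tau_-}$ satisfies $\tau_-^x(\mathcal{X}) = \tau_- + \tau_-^x\circ\theta_{\tau_-}$, so that $\bm{O}_x(\mathcal{X}) = \bm{O}_x\circ\theta_{\tau_-}$. Conditioning on $\mathcal{F}_{\tau_-}$ and applying the strong Markov property,
\begin{align*}
\mathds{E}_{\bm{\alpha}}\!\left[\bm{O}_x\Indic{\tau_-^x<\infty}\right]
&= \mathds{E}_{\bm{\alpha}}\!\left[\mathds{E}_{\bm{A}_{\tau_-}}\!\left[\bm{O}_x\Indic{\tau_-^x<\infty}\right]\Indic{\tau_-<\infty}\right].
\end{align*}
Since $\bm{A}_{\tau_-}\in\mathfrak{Z}^-$ on $\{\tau_-<\infty\}$, Theorem~\ref{th:Gxbeta} gives $\mathds{E}_{\bm{A}_{\tau_-}}[\bm{O}_x\Indic{\tau_-^x<\infty}] = \bm{A}_{\tau_-}e^{(C^-+D^{-+}\Psi)x}$. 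Plugging this in and pulling the deterministic matrix out of the expectation,
\begin{align*}
\mathds{E}_{\bm{\alpha}}\!\left[\bm{O}_x\Indic{\tau_-^x<\infty}\right]
&= \mathds{E}_{\bm{\alpha}}\!\left[\bm{A}_{\tau_-}\Indic{\tau_-<\infty}\right] e^{(C^-+D^{-+}\Psi)x}
= \bm{\alpha}\,\Psi\, e^{(C^-+D^{-+}\Psi)x},
\end{align*}
where the last equality invokes \eqref{eq:masterpsi3}. This proves the first identity.

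For the probability statement, I would post-multiply by $\bm{1}$. On $\{\tau_-^x<\infty\}$ the orbit $\bm{O}_x=\bm{A}_{\tau_-^x}$ lies in $\mathfrak{Z}^-$ and hence satisfies $\bm{O}_x\bm{1}=1$, so $\mathds{P}_{\bm{\alpha}}(\tau_-^x<\infty) = \mathds{E}_{\bm{\alpha}}[\bm{O}_x\Indic{\tau_-^x<\infty}]\bm{1}$, from which \eqref{eq:ruinprobx1} follows immediately.

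There is no substantive obstacle here: the only mild care is the justification that $\tau_-^x<\infty$ forces $\tau_-<\infty$ (immediate, since $\mathcal{R}$ is continuous by Assumption~\ref{assump:pm1}, so reaching $-x\le 0$ requires first reaching $0$) and that the Markov structure of $\mathcal{X}$ makes the strong Markov application valid with $\tau_-$ as the stopping time, which is exactly the setting described in the Preliminaries subsection.
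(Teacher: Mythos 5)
Your proof is correct and follows essentially the same route as the paper's: both condition on $\mathcal{F}_{\tau_-}$, apply the strong Markov property together with Theorem~\ref{th:Gxbeta} to reduce the inner expectation to $\bm{A}_{\tau_-}e^{(C^-+D^{-+}\Psi)x}$, and then invoke \eqref{eq:masterpsi3} to obtain $\bm{\alpha}\Psi$; the probability identity is obtained in both cases by post-multiplying by $\bm{1}$ and using that $\bm{O}_x\bm{1}=1$ on $\{\tau_-^x<\infty\}$ thanks to the affine structure of $\mathfrak{Z}^-$.
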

\begin{proof}
The strong Markov property, Corollary \ref{cor:algorithmpsi} and Theorem \ref{th:Gxbeta} imply that
\begin{align*}
\mathds{E}_{\bm{\alpha}} \left[\bm{O}_x\Indic{\tau_-^x <\infty}\right]  & =\mathds{E}_{\bm{\alpha}}\left[\mathds{E}_{\bm{\alpha}}\left[\left.\bm{O}_x\Indic{\tau_-^x <\infty}\;\right|\mathcal{F}_{\tau_-}\right]\Indic{\tau_-<\infty}\right]\\
&=\mathds{E}_{\bm{\alpha}}\left[\left(\bm{A}_{\tau_-}e^{(C^- +D^{-+}\Psi) x}\right)\Indic{\tau_-<\infty}\right] \\
& =\bm{\alpha}\Psi e^{(C^- +D^{-+}\Psi) x}.
\end{align*} 
Equation (\ref{eq:ruinprobx1}) follows because when $\tau_{-}^x <\infty$,  $\bm{O}_x\bm{1}=1$ due to the affine nature of $\mathfrak{Z}^-$.
\end{proof}

\subsection{Stationary distribution of the queue}
	\label{sec:stationaryz0}
In this section we determine the limiting behaviour of the process $\mathcal{Q} = \{Q_t\}_{t\ge 0}$ defined by
\[Q_t := R_t +\max\left\{0, \sup_{0\le s\le t} -R_s\right\}.\]
The process $\mathcal{Q}$ is the process $\mathcal{R}$ regulated at level $0$. We refer to $\{(Q_t, \bm{A}_t)\}_{t\ge 0}$ as the \emph{RAP-modulated fluid queue}, and assume the following stability condition. 
%
\begin{Condition}
	\label{cond:posrec} 
The process $\{Q_t\}_{t\ge 0}$ is positive recurrent under the measure $\mathds{P}_{\bm{\alpha}}$ for all $\bm{\alpha}\in\mathfrak{Z}_+\cup\mathfrak{Z}_-$.
\end{Condition}
Condition~\ref{cond:posrec} implies that for all $\bm{\alpha}\in\mathfrak{Z}_+\cup\mathfrak{Z}_-$, 
	\begin{align} 
		\mathds{E}_{\bm{\alpha}}\left[\tau_-\right] & <\infty, \nonumber\\
		\mathds{P}_{\bm{\alpha}}(\tau_-^x<\infty) & =1 \quad \mbox{ for $x\ge 0$,} \nonumber\\
		\Psi\bm{1} & =\bm{1}. \label{eq:psioneeqone}
	\end{align} 
%

In order to study the limiting behaviour of $\mathcal{Q}$, first let us compute
\[\lim_{t\rightarrow\infty} \mathds{E}\left[\bm{A}_t\Indic{Q_t=0,\;\bm{A}_t\in\mathfrak{Z}^-}\right].\]
Define the time-change
\[
	z_t:=\int_0^t\Indic{Q_s = 0, \;\bm{A}_s\in\mathfrak{Z}^-}\dd s,\qquad t\ge 0.
\]
A pathwise inspection reveals that, by Assumption~\ref{assump:pm1}, on the event $\{Q_t=0, \bm{A}_t\in\mathfrak{Z}^-\}$ we have $\bm{A}_t = \bm{O}_{z_t}$.
%
Thus,
\begin{align}
\lim_{t\rightarrow\infty} \mathds{E}\left[\bm{A}_t\Indic{Q_{t}=0,\; \bm{A}_t\in\mathfrak{Z}^-}\right] & = \lim_{t\rightarrow\infty} \mathds{E}\left[\bm{O}_{z_t}\Indic{Q_{t}=0,\; \bm{A}_t\in\mathfrak{Z}^-}\right]\nonumber\\
&=\lim_{y\rightarrow\infty}c_- \mathds{E}\left[\bm{O}_{y}\right],\label{eq:statdown1}
\end{align}
where $c_- :=\lim_{t\rightarrow \infty}\mathds{P}(Q_t=0, \bm{A}_t\in\mathfrak{Z}^-)$, the proportion of time $\{Q_t\}_{t\ge 0}$ spends in level $0$. We will compute the precise value of $c_-$ in (\ref{eq:c0condition1}), for now let it be unknown.
%

Since, by Condition~\ref{cond:posrec} $R_t\rightarrow -\infty$ as $t\rightarrow\infty$, this implies the first hitting time $\tau_{-}^y < \infty$ for all level $-y$, and therefore $\bm{O}_y\bm{1}=1$ almost surely. 
%
%
Thus, Theorem \ref{th:Gxbeta} implies that under Condition~\ref{cond:posrec} and in the case $\orbit$ starts in $\bm{\beta} \in \mathfrak{Z}^-$,
	\begin{align}\label{eq:Ox1eq1}
	\bm{\beta} e^{(C^- + D^{-+}\Psi) y}\bm{1} = \mathds{E}_{\bm{\beta}} \left[\bm{O}_{y}\bm{1}\right]=1 \quad \mbox{for all } y\ge 0.
	\end{align}
%
	
	%
	
Since $\bm{A}_0$ can be chosen to take any value $\bm{\beta}$ in $\mathfrak{Z}^-$, and $\mathfrak{Z}^-$ attains the minimality property, then by Lemma~\ref{lem:bminimality} and (\ref{eq:Ox1eq1}) the unique solution to $\bm{\beta} \bm{x}=1$ is $\bm{x}=e^{(C^- + D^{-+}\Psi) y}\bm{1}$. Since $\bm{x}=\bm{1}$ is also a solution, it follows that $e^{(C^- + D^{-+}\Psi) y}\bm{1} = \bm{1}$ for all $y \geq 0$. Differentiating and evaluating at $y=0$ gives $(C^- + D^{-+}\Psi)\bm{1} = \bm{0}$, which implies that $0$ is an eigenvalue of $C^- + D^{-+}\Psi$. The following is a stronger condition needed for our analysis.
	%

\begin{Condition}
	\label{cond:eigen1}
The eigenvalue~$0$ of the matrix $C^- + D^{-+}\Psi$ has multiplicity~$1$ and a normalised left eigenvector $\bm{v}_0$ (i.e., $\bm{v}_0 \bm{1} = 1)$.
\end{Condition}
%
%
%
%
Assume Condition \ref{cond:eigen1}. Then, by \cite[Lemma 4.1(b)]{asm:99},
$ e^{(C^- + D^{-+}\Psi) y} = \bm{1}\bm{v}_0 + o(e^{-\varepsilon y})$
for some $\varepsilon>0$.  This implies that in the case $\orbit$ starts in $\bm{\beta} \in \mathfrak{Z}^-$,
 \[
 	\lim_{y\rightarrow\infty}\mathds{E}_{\bm{\beta}} \left[\bm{O}_{y}\right] = \bm{\beta}(\bm{1}\bm{v}_0) = (\bm{\beta}\bm{1})\bm{v}_0 = \bm{v}_0,\]
Similarly, using Corollary \ref{cor:ruin3} and Eq. (\ref{eq:psioneeqone}) we get that in the case $\orbit$ starts in $\mathfrak{Z}^+$,
 \[	
 	\lim_{y\rightarrow\infty}\mathds{E}_{\bm{\beta}}\left[\bm{O}_{y}\right] = \bm{\beta}\Psi(\bm{1}\bm{v}_0) = \bm{\beta}(\Psi\bm{1})\bm{v}_0 = (\bm{\beta}\bm{1})\bm{v}_0 = \bm{v}_0.
\]
 %
 
Consequently, independently of the starting point of $\orbit$, by (\ref{eq:statdown1}),
\begin{align}	
	\label{eq:limitOy1}
	\lim_{t\rightarrow\infty} \mathds{E}\left[\bm{A}_t\Indic{Q_{t}=0,\; \bm{A}_t\in\mathfrak{Z}^-}\right]  = c_-\bm{v}_0.
	\end{align}
%

While (\ref{eq:limitOy1}) gives us a good indication of the expected behaviour of $\bm{A}_t$ for large values of $t$ with $Q_t=0$, we still need to analyse what happens to $\orbit$ between the epochs at which $\mathcal{Q}$ leaves the boundary $0$ and the epochs at which $\mathcal{Q}$ returns to $0$. Thus, we now focus on the properties of $\orbit$ while on a excursion of $\mathcal{Q}$ away from $0$. In the following, we study the process $\mathcal{R}$ up to time $\tau_-$, which is identical to the process $\mathcal{Q}$ up to time $\tau_-$. 
For 
$x\ge 0$, let
\begin{align*}
%
\mathcal{U}^x &:=\{u\in [0,\tau_-): R_u=x, \boldsymbol{A}_u\in\mathfrak{Z}^+\}, \quad \mathcal{D}^x :=\{u\in (0,\tau_-]: R_u=x, \boldsymbol{A}_u\in\mathfrak{Z}^-\}.
\end{align*}
Then $\mathcal{U}^x$ corresponds to the set of time epochs, $\{u_i^x\}_{i \geq 1}$, at which the process $\mathcal{R}$ upcrosses level~$x$ before $\tau_-$, while $\mathcal{D}^x$ corresponds to the set of time epochs, $\{d_i^x\}_{i \geq 1}$, at which $\mathcal{R}$ downcrosses level $x$ before~$\tau_-$. We compute the expected value of the sum of $\boldsymbol{\mathcal{A}}$ evaluated at each point in $\mathcal{U}^x$ and in $\mathcal{D}^x$ in Theorem~\ref{th:upcrossings3} and Corollary~\ref{cor:sumdx1}, respectively. 

\begin{theorem}
	\label{th:upcrossings3}
Let $x \ge 0$ and $\bm{\alpha}\in\mathfrak{Z}^+$. Then,
\begin{align}
\mathds{E}_{\bm{\alpha}}\left[\sum_{u_i^x \in \mathcal{U}^x}\bm{A}_{u_i^x}\right]= \bm{\alpha}e^{(C^+ + \Psi D^{-+})x}.
\end{align}
\end{theorem}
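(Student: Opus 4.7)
My plan is to mirror the structure of Theorems~\ref{th:Psinintegral3} and~\ref{th:Gxbeta}. I will define $\Phi_n(x) := \mathds{E}_{\bm{\alpha}}[\bm{A}_{u_n^x}\Indic{u_n^x<\infty}]$, where $u_n^x$ is the $n$-th upcrossing of $x$ before $\tau_-$, and prove by induction on $n\ge 1$ that $\Phi_n(x) = \bm{\alpha}F_n(x)$ for a unique continuous matrix-valued function $F_n$. Summing, $\Xi(x) := \mathds{E}_{\bm{\alpha}}[\sum_{u\in\mathcal{U}^x}\bm{A}_u]$ factors as $\bm{\alpha}U(x)$ with $U(x) := \sum_{n\ge 1}F_n(x)$, and the theorem reduces to deriving a Volterra integral equation for $U$ and invoking Theorem~\ref{th:explicitphi}.

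The inductive step $n\mapsto n+1$ proceeds via the strong Markov property at $u_n^x$: the excursion of $\mathcal{R}$ above $x$ following $u_n^x$ returns to level $x$ at the first downcrossing $d_n^x$ with mean orbit $\bm{A}_{u_n^x}\Psi$ by Corollary~\ref{cor:algorithmpsi}. From $d_n^x$, I further decompose according to the level $y\in[0,x)$ at which the subsequent $\mathfrak{Z}^-\!\to\!\mathfrak{Z}^+$ transition takes place: Lemma~\ref{lem:basicpropFRAP1} supplies the factor $e^{C^-(x-y)}D^{-+}$ for the descent from $x$ in $\mathfrak{Z}^-$ to that transition at level $y$, and Theorem~\ref{th:expandedB1} supplies the factor $e^{C^+(x-y)}$ for the ensuing direct ascent back to $x$ in $\mathfrak{Z}^+$. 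The base case $n=1$ follows a parallel decomposition, where the initial position $(0,\bm{\alpha})$ plays the role of a virtual entry to $\mathfrak{Z}^+$ at level $0$ and a subinduction on the number of intervening dips, in the spirit of Theorem~\ref{th:Psinintegral3}, handles paths with one or more down-excursions preceding the first hitting of level $x$.

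Assembling these recursions, I expect $\Xi$ to satisfy
\[
\Xi(x) = \bm{\alpha}\,e^{C^+x} \;+\; \int_0^x \Xi(y)\,\Psi D^{-+}\,e^{C^+(x-y)}\,\dd y,
\]
in which the first term gathers upcrossings whose most recent entry to $\mathfrak{Z}^+$ is the initial point and the integrand at level $y$ gathers those whose most recent $\mathfrak{Z}^-\!\to\!\mathfrak{Z}^+$ transition lies at $y$ (the $\Psi$ factor arising from the strong-Markov pairing of each $d\in\mathcal{D}^y$ with its preceding upcrossing in $\mathcal{U}^y$ via the excursion above $y$). Writing $\Xi(x) = \bm{\alpha}U(x)$ via Lemma~\ref{lem:bminimality}, changing variable $s=x-y$, transposing, and applying Theorem~\ref{th:explicitphi} with $A=(C^+)^T$ and $B=(D^{-+})^T\Psi^T$ yields $U(x) = e^{(C^+ + \Psi D^{-+})x}$. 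The main obstacle I anticipate is the careful bookkeeping for the first-return-from-below substructure between consecutive upcrossings of $x$; once the kernel $\Psi D^{-+}$ is correctly identified, the application of Theorem~\ref{th:explicitphi} is essentially routine.
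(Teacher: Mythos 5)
Your final integral equation
\[
\Xi(x) = \bm{\alpha}\,e^{C^+x} + \int_0^x \Xi(y)\,\Psi D^{-+}\,e^{C^+(x-y)}\,\dd y
\]
is correct (it encodes the decomposition by the \emph{most recent} down-up peak preceding each $u_i^x$), and your observation that transposing turns it into the form of Theorem~\ref{th:explicitphi}, yielding $\Xi(x)=\bm{\alpha}e^{(C^++\Psi D^{-+})x}$, is also valid. However, the inductive framework you set up does not produce this equation, and as written it would fail. You define $\Phi_n(x)$ via the $n$-th \emph{chronological} upcrossing $u_n^x$ and describe the step $n\mapsto n+1$ as: apply $\Psi$ at $u_n^x$ (fine), then decompose by the level $y$ of the \emph{first} $\mathfrak{Z}^-\!\to\!\mathfrak{Z}^+$ transition after $d_n^x$, picking up $e^{C^-(x-y)}D^{-+}$ for the descent and $e^{C^+(x-y)}$ for a ``direct ascent back to $x$.'' Between $d_n^x$ and $u_{n+1}^x$ the excursion below level $x$ can contain arbitrarily many up-down and down-up peaks; the ascent from $y$ is direct only if you condition on the \emph{last} peak before $u_{n+1}^x$, while the descent from $x$ is direct only if you condition on the \emph{first} peak after $d_n^x$ — you cannot have both simultaneously. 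Moreover, even were that step correct, it would give $\Phi_{n+1}(x)=\Phi_n(x)\Psi\!\int_0^x\! e^{C^-(x-y)}D^{-+}e^{C^+(x-y)}\dd y$, which sums to a \emph{pointwise} algebraic relation in $\Xi(x)$, not the Volterra equation you ultimately write down; the $e^{C^-(x-y)}$ factor you introduce appears nowhere in your final kernel.

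To make your ``most recent peak'' decomposition rigorous you would need to partition $\mathcal{U}^x$ not by chronological index but by the \emph{number of preceding down-up peaks}, and peel off the most recent one at each inductive stage. The paper instead partitions $\mathcal{U}^x$ by a \emph{complexity} index: $\mathcal{U}^x_1$ collects the upcrossing reached without any intervening down-up peak, and $\mathcal{U}^x_{n+1}$ is defined recursively via the \emph{lowest} down-up peak $\gamma_i^x$ preceding $u_i^x$, giving the dual-sided recursion $\Upsilon_n(x)=\int_0^x e^{C^+y}\Psi D^{-+}\Upsilon_{n-1}(x-y)\dd y$, which is directly in the form of Theorem~\ref{th:explicitphi} (no transpose needed). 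Both partitions work; the paper's ``lowest peak'' one is cleaner because it places the recursion kernel $e^{C^+y}\Psi D^{-+}$ on the left, matching Theorem~\ref{th:explicitphi} as stated. The gap in your proposal is that the induction you actually describe is a third thing, inconsistent with both, and it over-counts nothing but \emph{under-counts} all paths with more than one valley between consecutive upcrossings of level $x$.
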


\begin{proof}

First, we classify the set of points $\{u_i^x\}_{i \geq 1}$ in $\mathcal{U}^x$ according to their \emph{complexity}. Define
\[
	\mathcal{U}^x_1 :=\left\{\begin{array}{cl}\{ u_1^x \}& \mbox{if }\bm{A}_s \in\mathfrak{Z}^+\;\forall s\in[0,x], \\
\vspace*{-0.3cm} \\
\varnothing &\mbox{otherwise}.\end{array}\right.
\] 
By (\ref{eq:leveltime1}), the set $\mathcal{U}^x_1$ contains the \emph{simplest} epoch at which $\mathcal{R}$ upcrosses $x$, in the sense that it upcrosses $x$ before any change of directions; in that case,  Assumption~\ref{assump:pm1} implies $u_1^x = x$.



Next, for $\mathcal{X} \in\Omega\setminus\Omega_1$  and for each $u^x_i \in \mathcal{U}^x \setminus \mathcal{U}^x_{1}$ define
\begin{align*} 
\gamma^x_i & := 
\inf\left\{y\in (0,x): \mbox{$\exists t\in[0,u^x_i)$ s.t. $R_t=y$, $\bm{A}_{t^-}\in\mathfrak{Z}^-$, $\bm{A}_{t}\in\mathfrak{Z}^+$}\right\},  \\
%
\xi^x_i & := \arg \inf\left\{y\in (0,x): \mbox{$\exists t\in[0,u^x_i)$ s.t. $R_t=y$, $\bm{A}_{t^-}\in\mathfrak{Z}^-$, $\bm{A}_{t}\in\mathfrak{Z}^+$}\right\}; 
\end{align*}  
$\gamma^x_i$ corresponds to the lowest level at which $\mathcal{R}$ has a down-up peak before time~$u^x_i$, and $\xi_i^x$ is the point in time of this down-up peak. 
%
%
For $n\ge 1$ recursively define
\begin{align*} 
	\mathcal{U}^x_{n+1} :=\left\{u^x_i \in \mathcal{U}^x \setminus \mathcal{U}^x_{1}: u_i^x - \xi_i^x \in \mathcal{U}^{x-\gamma_i^x}_{n}\circ\theta_{\xi_i^x}\right\},
\end{align*} 
where $\mathcal{U}^{x - \gamma_i^x}_{n}\circ\theta_{\xi_i^x}$ is the set $U_n^{x - \gamma_i^x}$ of upcrossing times to level $x - \gamma_i^x$ of the $\theta_{\xi_i^x}$-shifted path.
%
%
Thus, each epoch $u^x_i \in \mathcal{U}^x_{n+1}$ is such that $u_i^x - \xi_i^x$ is an upcrossing epoch in the set $\mathcal{U}^{x - \gamma_i^x}_{n}$of the $\theta_{\xi_i^x}$-shifted path; the set $\mathcal{U}^{x - \gamma_i^x}_{n}$ is of a \emph{lower complexity}\footnote{Another way to view this partition of sets: $u\in\mathcal{U}^x_n$ if and only if the time-reversed process $\{R_{u-s}-R_u\}_{s=0}^u$ reaches level $-x$ (at time $u$) with $V_x=n$.}. 
This implies that the collection $\{\mathcal{U}^x_n\}_{n\ge 1}$ is a partition of $\mathcal{U}^x$. See Figure \ref{fig:stationary1} for an illustration.
%
%
\begin{figure}[h]
\centering 
	\includegraphics[scale=0.44]{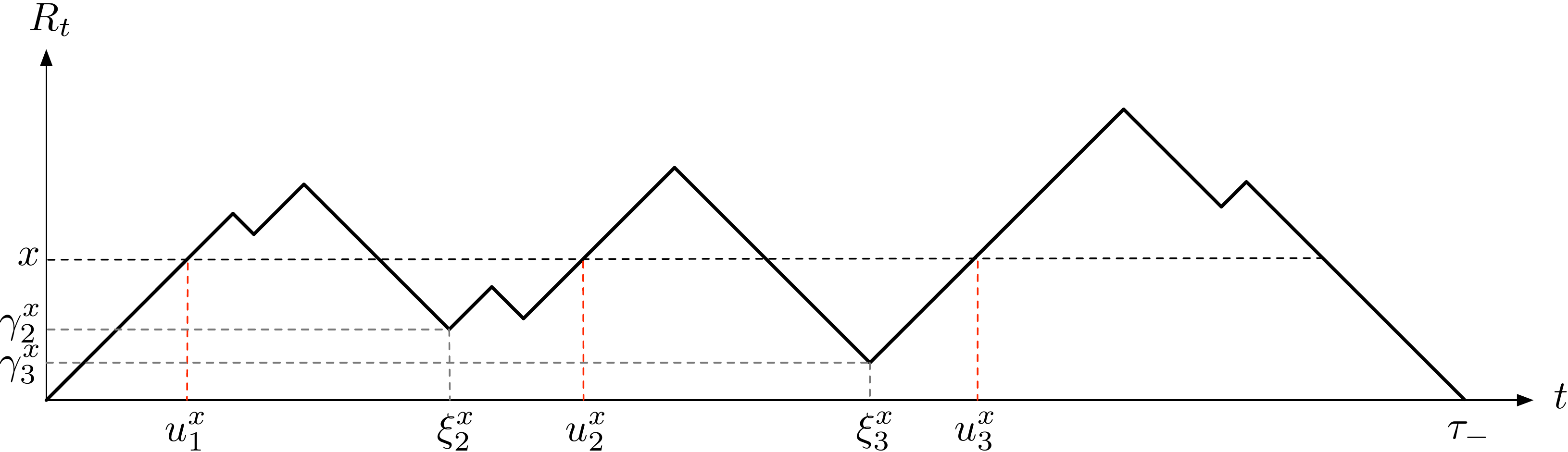}
\caption{Upcrossing times $u_1^x, u_2^x, u_3^x \in\mathcal{U}^x$ before $\tau_-$. Note that $u_1^x \in\mathcal{U}^x_1$, $u_2^x \in\mathcal{U}^x_3$, $u_3^x \in\mathcal{U}^x_2$.}
\label{fig:stationary1}
\end{figure}

First, we claim by induction that for all $n\ge 1$ and $x > 0$, 
\begin{align}
	\label{eq:upsilon1}
\mathds{E}_{\bm{\alpha}}\left[\sum_{u^x_i\in \mathcal{U}^x_n}\bm{A}_{u^x_i}\right]= \bm{\alpha}\Upsilon_n(x) ,
\end{align}
where $\{\Upsilon_n(\cdot)\}_{n\ge 1}$ are some continuous and unique matrices. 

{\bf Case $n=1$.}
  By (\ref{eq:leveltime1}), we have that
  \begin{align*}
  \mathds{E}_{\bm{\alpha}}\left[\sum_{u^x_1\in \mathcal{U}^x_1}\bm{A}_{u_i^x} \right] & = \mathds{E}_{\bm{\alpha}}\left[\bm{A}_x\Indic{\bm{A}_s\in\mathfrak{Z}^+\;\forall s\in[0,x]}\right] = \bm{\alpha}e^{C^+ x},
  \end{align*}
  so that (\ref{eq:upsilon1}) follows with $\Upsilon_1(x) = e^{C^+ x}$.
  
 {\bf Inductive part.} Suppose (\ref{eq:upsilon1}) holds for some $n\ge 1$. Let $y\in(0, x)$. Note that if $u_i^x, u_j^x \in \mathcal{U}^x_{n+1}$ and $\gamma_i^x,\gamma_j^x\in (y-\dd y, y)$, then 
 $\gamma_i^x=\gamma_j^x$ and $\xi_i^x= \xi_j^x$: this follows since each path in $\Omega$ there are no jumps that occur at the exact same level. 
 %
%
%
 This implies that 
\begin{align*}
\mathds{E}_{\bm{\alpha}} & \left[\sum_{u^x_i \in \mathcal{U}^x_{n+1}}\bm{A}_{u_i^x}\Indic{\gamma_i^x\in (y-\dd y, y)}\right] = \mathds{E}_{\bm{\alpha}}\left[\left[\sum_{u_i^{x - y} \in \mathcal{U}^{x-y}_{n}\circ\theta_{S_3}}\bm{A}_{u_i^{x - y}}\right]\Indic{B_1\cap B_2\cap B_{3}}\right],
\end{align*}
%
%
where
\[
	S_1 := y,\qquad S_2 := S_1 + \tau_-\circ\theta_{S_1},\qquad S_3 := S_2 + \tau_+\circ\theta_{S_2},
\]
and
\begin{align*}
B_1 & = \{\bm{A}_s\in\mathfrak{Z}^+\mbox{ for all }s \in [0,S_1)\},\\
B_2 & = \{\mathcal{X}\circ\theta_{S_1} \in\Omega_-\},\\
B_{3} & =\{\mbox{There exists $s\in (S_2, S_2 + \dd y)$ such that $\bm{A}_{s^-}\in\mathfrak{Z}^-$ and $\bm{A}_s\in\mathfrak{Z}^+$}\}. 
\end{align*}
%
Then,
\begin{align*}
& \mathds{E}_{\bm{\alpha}} \left[\sum_{u^x_i \in \mathcal{U}^x_{n+1}}\bm{A}_{u^x_i}\Indic{\gamma^x_i\in (y-\dd y, y)}\right] \\
 & =\mathds{E}_{\bm{\alpha}}\left[\mathds{E}_{\bm{\alpha}}\left[\left.\sum_{u_i^{x-y} \in \mathcal{U}^{x-y}_{n}\circ \theta_{S_3} }\bm{A}_{u^x_i}\;\right| \mathcal{F}_{S_3}\right]\Indic{B_1\cap B_2\cap B_{3}}\right] \\
&=\mathds{E}_{\bm{\alpha}}\left[\left(\bm{A}_{S_3}\Upsilon_n(x-y)\right)\Indic{B_1\cap B_2\cap B_{3}}\right] \quad \mbox{(by the induction hypothesis)} \\
& =  \mathds{E}_{\bm{\alpha}}\left[\mathds{E}_{\bm{\alpha}}\left[\left.\bm{A}_{S_3}\Indic{B_{3}}\;\right|\mathcal{F}_{S_2}\right]\Indic{B_1\cap B_2}\right]\Upsilon_n(x-y)\nonumber\\
& = \mathds{E}_{\bm{\alpha}}\left[\left(\bm{A}_{S_2}D^{-+}\dd y\right)\Indic{B_1\cap B_2}\right]\Upsilon_n(x-y) \quad \mbox{(by (\ref{eq:auxdensity1}))} \nonumber\\
& =  \mathds{E}_{\bm{\alpha}}\left[\mathds{E}_{\bm{\alpha}}\left[\left.\bm{A}_{S_2}\Indic{B_2}\;\right|\mathcal{F}_{S_1}\right]\Indic{B_1}\right]D^{-+}\Upsilon_n(x-y)\dd y\nonumber\\
& = \mathds{E}_{\bm{\alpha}}\left[\left(\bm{A}_{S_1}\Psi\right)\Indic{ B_1}\right]D^{-+}\Upsilon_n(x-y)\dd y \quad \mbox{(by Corollary \ref{cor:algorithmpsi})}\nonumber\\
& = \bm{\alpha}e^{C^+y}\Psi D^{-+}\Upsilon_n(x-y)\dd y \quad \mbox{(by Theorem \ref{th:expandedB1})}. 
\end{align*}
%

Thus, (\ref{eq:upsilon1}) recursively holds with
\begin{align}
	\label{eq:Upsilonnintegral}\Upsilon_{n}(x)=\int_0^x e^{C^+y}\Psi D^{-+}\Upsilon_{n-1}(x-y)\dd y.
\end{align}
Summing (\ref{eq:Upsilonnintegral}) over $n\ge 0$ and using the ergodicty of $\{Q_t\}_{t\ge 0}$ together with Fubini's Theorem, we obtain
\[\mathds{E}_{\bm{\alpha}}\left[\sum_{u_i^x \in \mathcal{U}^x}\bm{A}_{u^x_i}\right]=\bm{\beta}\Upsilon(x)\]
for a unique $\Upsilon(\cdot)$ which is uniformly entrywise-bounded on compact intervals and satisfies 
\[
	\Upsilon(x)=e^{C^+y} + \int_0^x e^{C^+y}\Psi D^{-+}\Upsilon(x-y)\dd y.
\]
Theorem \ref{th:explicitphi} implies that $\Upsilon(x)=e^{(C^+ +\Psi D^{-+}) x}$, which completes the proof.
\end{proof}

The expected value of the orbit at the points in $\mathcal{D}^x$ is computed as follows.

\begin{Corollary}
	\label{cor:sumdx1}
Let $x \ge 0$ and $\bm{\alpha}\in\mathfrak{Z}^+$. Then,
\begin{align}
\mathds{E}_{\bm{\alpha}}\left[\sum_{d_i^x \in \mathcal{D}^x}\bm{A}_{d_i^x} \right]= \bm{\alpha}e^{(C^+ + \Psi D^{-+})x}\Psi.
\end{align}
\end{Corollary}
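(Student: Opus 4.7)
The plan is to exploit the natural pairing between upcrossings and downcrossings of level $x$: every downcrossing $d_i^x \in \mathcal{D}^x$ is preceded by a unique upcrossing $u_j^x \in \mathcal{U}^x$ at which the process crossed $x$ from below most recently, and conversely, each upcrossing is either followed by a matching downcrossing (before $\tau_-$) or by an excursion above $x$ that never returns. Since Assumption~\ref{assump:pm1} imposes rates $\pm 1$, no ambiguity arises between these two sets.

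Concretely, for each $u_i^x \in \mathcal{U}^x$ define the potential paired downcrossing time $d(u_i^x) := u_i^x + \tau_- \circ \theta_{u_i^x}$. Then, on the (full-measure) set $\Omega$, the decomposition
\[
\sum_{d \in \mathcal{D}^x} \bm{A}_{d} \;=\; \sum_{u_i^x \in \mathcal{U}^x} \bm{A}_{d(u_i^x)}\,\Indic{d(u_i^x) < \infty}
\]
holds pathwise. First I would verify this identity by noting that the excursions of $\mathcal{R}$ above level $x$ during $[0,\tau_-)$ are disjoint intervals, each opened by some $u_i^x \in \mathcal{U}^x$ and closed (if at all) by the corresponding $d(u_i^x)$; moreover any downcrossing in $\mathcal{D}^x$ (including $\tau_-$ itself, when $R_{\tau_-}=x$ cannot actually occur unless $x = 0$, so really the decomposition pairs naturally) closes some excursion, so the correspondence is exhaustive.

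Next, conditioning on $\mathcal{F}_{u_i^x}$ and using the strong Markov property of $\mathcal{X}$ at the stopping time $u_i^x$, together with Corollary~\ref{cor:algorithmpsi},
\[
\mathds{E}_{\bm{\alpha}}\!\left[\bm{A}_{d(u_i^x)}\Indic{d(u_i^x)<\infty}\,\big|\,\mathcal{F}_{u_i^x}\right] \;=\; \mathds{E}_{\bm{A}_{u_i^x}}\!\left[\bm{A}_{\tau_-}\Indic{\tau_-<\infty}\right] \;=\; \bm{A}_{u_i^x}\Psi,
\]
since $\bm{A}_{u_i^x} \in \mathfrak{Z}^+$ for every upcrossing time by definition. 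Taking total expectation, interchanging sum and expectation (justified by Fubini/Tonelli and the entrywise boundedness already used in the proof of Theorem~\ref{th:upcrossings3}, since the entries of $\bm{A}_{u_i^x}\Psi$ have bounded sign behaviour controlled by the entrywise-bounded upcrossing sum), and applying Theorem~\ref{th:upcrossings3} yields
\[
\mathds{E}_{\bm{\alpha}}\!\left[\sum_{d_i^x \in \mathcal{D}^x}\bm{A}_{d_i^x}\right] \;=\; \mathds{E}_{\bm{\alpha}}\!\left[\sum_{u_i^x \in \mathcal{U}^x}\bm{A}_{u_i^x}\right]\Psi \;=\; \bm{\alpha}\,e^{(C^+ + \Psi D^{-+})x}\,\Psi.
\]

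The main obstacle, modest as it is, is the bookkeeping for the pairing: one must carefully argue that the map $u_i^x \mapsto d(u_i^x)$ is a bijection between $\mathcal{U}^x$ and $\{d \in \mathcal{D}^x\}$ restricted to paired downcrossings (and that unpaired upcrossings correspond precisely to excursions above $x$ that escape to $+\infty$, whose contribution vanishes through the indicator $\Indic{d(u_i^x)<\infty}$). The rest is a clean application of the strong Markov property and Theorem~\ref{th:upcrossings3}; interchanging sum and expectation is handled exactly as in the preceding proofs, so as suggested by the remark following Theorem~\ref{th:expandedB1} those steps may be omitted.
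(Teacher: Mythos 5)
Your proposal is correct and takes essentially the same approach as the paper: pair each upcrossing $u_i^x \in \mathcal{U}^x$ with the subsequent downcrossing $u_i^x + \tau_- \circ \theta_{u_i^x}$, apply the strong Markov property together with Corollary~\ref{cor:algorithmpsi} to replace $\bm{A}_{d(u_i^x)}$ by $\bm{A}_{u_i^x}\Psi$ in expectation, and then invoke Theorem~\ref{th:upcrossings3}. Your extra care with the indicator $\Indic{d(u_i^x) < \infty}$ and the one-to-one pairing is a useful clarification that the paper leaves implicit.
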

\begin{proof}
Let $S=\tau_-\circ \theta_{u_i^x}$, the time elapsed from $u_i^x$ until the next downcrossing of level $x$. Then
\begin{align*}
\mathds{E}_{\bm{\alpha}}\left[\sum_{d_i^x \in \mathcal{D}^x}\bm{A}_{d_i^x}\right]& = \mathds{E}_{\bm{\alpha}}\left[\sum_{u_i^x \in \mathcal{U}^x}\bm{A}_{u_i^x +S }\right] \\
& = \mathds{E}_{\bm{\alpha}}\left[\sum_{u_i^x \in \mathcal{U}^x}\mathds{E}_{\bm{\alpha}}\left[\left.\bm{A}_{u_i^x+S }\;\right|\mathcal{F}_{u_i^x} \right]\right] \quad (\mbox{by Corollary \ref{cor:algorithmpsi}}) \\
& = \mathds{E}_{\bm{\alpha}}\left[\sum_{u_i^x \in \mathcal{U}^x}\bm{A}_{u_i^x} \Psi\right]  \\
& = \bm{\alpha}e^{(C^+ + \Psi D^{-+})x}\Psi,
\end{align*}
%
by Theorem \ref{th:upcrossings3}. 
\end{proof}

\begin{remark}
Since $\lim\limits_{t\rightarrow\infty}R_t=-\infty$ a.s., Theorem \ref{th:upcrossings3} implies that
	\begin{align*} 
		\lim_{x\rightarrow \infty}\bm{\alpha}e^{(C^+ + \Psi D^{-+})x}=\bm{0}\quad\mbox{for all } \bm{\alpha}\in\mathfrak{Z}^+.
	\end{align*} 
	Lemma \ref{lem:bminimality} implies that
	\begin{align*} 
		 \lim_{x\rightarrow \infty}e^{(C^+ + \Psi D^{-+})x}=0,
	\end{align*} 
	so that the dominant eigenvalue of $C^+ + \Psi D^{-+}$ has strictly negative real part.
\end{remark}

Now we are ready to state and prove the main result of this section regarding the limiting behaviour of $(\mathcal{Q}, \boldsymbol{\mathcal{A}})$.

%
\begin{theorem}
	\label{th:piplusminus1}
For $x>0$ define
\begin{align}
\Pi^+(x) & =\lim_{t\rightarrow\infty} \frac{\dd}{\dd x} \mathds{E}\left[\bm{A}_t\Indic{Q_t\in(0,x),\; \bm{A}_t\in\mathfrak{Z}^+}\right], \label{eq:Piplus1}\\
\Pi^-(x) & =\lim_{t\rightarrow\infty} \frac{\dd}{\dd x}\mathds{E}\left[\bm{A}_t\Indic{Q_t\in(0,x),\; \bm{A}_t\in\mathfrak{Z}^- }\label{eq:Piminus1}\right].
\end{align}
Then, 
\begin{align}
\Pi^+(x)& = c_-\bm{v}_0D^{-+}e^{(C^+ + \Psi D^{-+})x}\quad\mbox{ and}\label{eq:piplus}\\
\Pi^-(x) &= c_-\bm{v}_0D^{-+}e^{(C^+ + \Psi D^{-+})x}\Psi,\label{eq:piminus}
\end{align}
where $c_- :=\lim_{t\rightarrow \infty}\mathds{P}(Q_t=0, \bm{A}_t\in\mathfrak{Z}^-)$ and $\bm{v}_0$ is defined as in Condition \ref{cond:eigen1}.
\end{theorem}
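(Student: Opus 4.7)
The plan is to exploit the regenerative structure of $\mathcal{Q}$ at successive starts of busy cycles, together with a coarea identity that turns occupation times into orbit-weighted level-crossing sums. Let $\zeta_0:=0$ and recursively $\zeta_{n+1}:=\inf\{s>\zeta_n:\bm{A}_{s^-}\in\mathfrak{Z}^-,\bm{A}_s\in\mathfrak{Z}^+,Q_s=0\}$: the starts of successive busy cycles. Under Condition~\ref{cond:posrec} these are a.s. finite with $\mathds{E}[\zeta_1]<\infty$, and $\{(\zeta_{n+1}-\zeta_n,\bm{A}_{\zeta_n})\}_{n\ge 0}$ is a positive-recurrent Markov renewal process whose embedded orbit admits a stationary distribution $\nu$ on $\mathfrak{Z}^+$. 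The renewal reward theorem, applied entrywise to the bounded row vector $\bm{A}_t\Indic{Q_t\le x,\,\bm{A}_t\in\mathfrak{Z}^+}$, gives
\begin{align*}
\lim_{t\to\infty}\mathds{E}[\bm{A}_t\Indic{Q_t\le x,\,\bm{A}_t\in\mathfrak{Z}^+}]
=\frac{1}{\mathds{E}_\nu[\zeta_1]}\mathds{E}_\nu\!\left[\int_0^{\tau_-}\bm{A}_s\Indic{R_s\le x,\,\bm{A}_s\in\mathfrak{Z}^+}\,\dd s\right],
\end{align*}
since within one cycle $\bm{A}_s\in\mathfrak{Z}^+$ only on $[0,\tau_-)$, during which $Q_s=R_s$.

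The pivotal observation is a pathwise coarea identity: because $R$ moves at slope $+1$ whenever $\bm{A}\in\mathfrak{Z}^+$, traversal of the infinitesimal strip $[y,y+\dd y]$ during any upcrossing $u_i^y\in\mathcal{U}^y$ takes time $\dd y$, during which $\bm{A}$ differs from $\bm{A}_{u_i^y}$ by $o(1)$. Hence
\begin{align*}
\int_0^{\tau_-}\bm{A}_s\Indic{R_s\le x,\,\bm{A}_s\in\mathfrak{Z}^+}\,\dd s
=\int_0^x\sum_{u_i^y\in\mathcal{U}^y}\bm{A}_{u_i^y}\,\dd y.
\end{align*}
Taking expectations under $\bm{A}_{\zeta_0}=\bm{\alpha}$ and applying Theorem~\ref{th:upcrossings3} gives $\mathds{E}_{\bm{\alpha}}[\sum_{u_i^y\in\mathcal{U}^y}\bm{A}_{u_i^y}]=\bm{\alpha}e^{(C^++\Psi D^{-+})y}$. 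Averaging over $\nu$ and differentiating in $x$ produces
\begin{align*}
\Pi^+(x)=\frac{\mathds{E}_\nu[\bm{A}_{\zeta_0}]}{\mathds{E}_\nu[\zeta_1]}\,e^{(C^++\Psi D^{-+})x}.
\end{align*}

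The prefactor is identified by a rate-conservation argument: since each cycle-start is a jump of $\orbit$ from $\mathfrak{Z}^-$ to $\mathfrak{Z}^+$ at level~$0$, occurring at instantaneous rate $\bm{A}_tD^{-+}\bm{1}$ and post-jump orbit $\bm{A}_tD^{-+}/(\bm{A}_tD^{-+}\bm{1})$, the compensator formula (Cesaro-averaged) yields
\begin{align*}
\frac{\mathds{E}_\nu[\bm{A}_{\zeta_0}]}{\mathds{E}_\nu[\zeta_1]}
=\lim_{t\to\infty}\frac{1}{t}\mathds{E}\!\left[\sum_{n:\zeta_n\le t}\bm{A}_{\zeta_n}\right]
=\lim_{t\to\infty}\mathds{E}[\bm{A}_tD^{-+}\Indic{Q_t=0,\,\bm{A}_t\in\mathfrak{Z}^-}]
=c_-\bm{v}_0D^{-+},
\end{align*}
where the final equality uses~(\ref{eq:limitOy1}). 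This proves~(\ref{eq:piplus}). For~(\ref{eq:piminus}) the argument is identical after replacing $\mathcal{U}^y$ by $\mathcal{D}^y$ (using that $R$ has slope $-1$ on $\mathfrak{Z}^-$, so the analogous coarea identity holds) and Theorem~\ref{th:upcrossings3} by Corollary~\ref{cor:sumdx1}.

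I expect the main obstacle to be rigorously justifying the coarea identity in the RAP setting, where $\bm{A}_s$ is not piecewise constant but evolves continuously between jumps via~(\ref{eq:ODEFRAP1}). The \emph{nice}-path restriction built into $\Omega$ (finitely many jumps on compacts, no two jumps at the same level) ensures that $\{s\in[0,\tau_-):R_s=y,\bm{A}_s\in\mathfrak{Z}^+\}$ is a.s. finite for each $y$, while boundedness of $\mathfrak{Z}^+$ and right-continuity of $\orbit$ allow one to uniformly bound $\bm{A}_s-\bm{A}_{u_i^y}$ on the sliver $R_s\in[y,y+\dd y]$ and pass to the $\dd y\downarrow 0$ limit. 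A secondary technicality is verifying the aperiodicity/Harris conditions required for the renewal reward theorem on the continuous state space $\mathfrak{Z}^+$, which should follow from PDMP regularity of $\orbit$ together with Condition~\ref{cond:posrec}.
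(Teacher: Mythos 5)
Your route is genuinely different from the paper's. The paper decomposes the time $t$ by the ``age'' variable $\chi_t := \sup\{s>0: Q_r>0 \ \forall r\in(t-s,t]\}$, i.e.\ the elapsed time since $\mathcal{Q}$ last left the boundary; it then applies the strong Markov property at $t-\chi_t$, uses the Bounded Convergence Theorem to pull the limit inside, recognizes the inner time-integral as $h\sum_{u\in\mathcal{U}^x}\bm{A}_u + o(h)$ (your coarea observation, in infinitesimal form), invokes Theorem~\ref{th:upcrossings3}, and finally closes the loop with the established limit $\lim_t\mathds{E}[\bm{A}_t\Indic{Q_t=0,\bm{A}_t\in\mathfrak{Z}^-}]=c_-\bm{v}_0$ from \eqref{eq:limitOy1}. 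You instead regenerate at busy-cycle starts $\zeta_n$, apply a Markov-renewal reward theorem, and identify the prefactor by a rate-conservation/compensator argument. Both proofs use the same two central ingredients --- Theorem~\ref{th:upcrossings3} and \eqref{eq:limitOy1} --- so they end up at the same place; the difference is whether the temporal averaging is done via an age decomposition or via regenerative cycles.

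There is, however, a real gap in your version that the paper's route is specifically designed to avoid. You invoke the renewal reward theorem for the Markov renewal process $\{(\zeta_{n+1}-\zeta_n,\bm{A}_{\zeta_n})\}$, which requires a stationary probability law $\nu$ for the embedded chain $\{\bm{A}_{\zeta_n}\}$ on the continuous state space $\mathfrak{Z}^+$, together with Harris ergodicity so that the cycle average converges. None of this is established by Condition~\ref{cond:posrec} (which is a statement about $\mathcal{Q}$, not about the embedded orbit), and on a general continuous state space existence/uniqueness of $\nu$ is far from automatic --- indeed the whole matrix-analytic machinery of the paper (Lemma~\ref{lem:bminimality}, Condition~\ref{cond:eigen1}, Theorem~\ref{th:Gxbeta}) exists precisely to extract the needed limiting behaviour of $\orbit$ via linear algebra and the eigenvalue condition rather than via Harris recurrence of an embedded chain. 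Your rate-conservation step $\mathds{E}_\nu[\bm{A}_{\zeta_0}]/\mathds{E}_\nu[\zeta_1]=\lim_t\mathds{E}[\bm{A}_tD^{-+}\Indic{Q_t=0,\bm{A}_t\in\mathfrak{Z}^-}]$ is a nice observation, but its left-hand side is exactly the quantity whose existence is in question; the paper's age decomposition reaches the right-hand side directly without ever passing through $\nu$. To make your argument complete you would either need to establish Harris ergodicity of $\{\bm{A}_{\zeta_n}\}$ from the PDMP structure and Condition~\ref{cond:posrec} (a nontrivial project), or bypass $\nu$ entirely by arguing the Cesaro limits exist via \eqref{eq:limitOy1} and the boundedness of $\mathfrak{Z}$, which would essentially reduce your argument to a rearranged version of the paper's.

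Two minor points: the paper works with $Q_t\in[x,x+h)$ rather than $Q_t\le x$, which dispenses with the boundary term at level~$0$ you would otherwise need to account for; and in the paper's version the $\widehat{D}^{-+}_{ij}$ decomposition is used to pass from the post-jump orbit to $\bm{A}_t D^{-+}$, which is exactly your compensator identity but stated pathwise. Your derivation of \eqref{eq:piminus} from \eqref{eq:piplus} via Corollary~\ref{cor:sumdx1} matches the paper's remark that the $\Pi^-$ case follows by analogous arguments.
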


\begin{proof} For $t > 0$, let $\chi_t :=\sup\{x > 0: Q_r>0\mbox{ for all }r\in(t- x, t]\}$; {see Figure \ref{fig:chit1} for a pathwise description of $\chi_t$.}

\begin{figure}[h]
	\begin{center} 
	\includegraphics[scale=0.44]{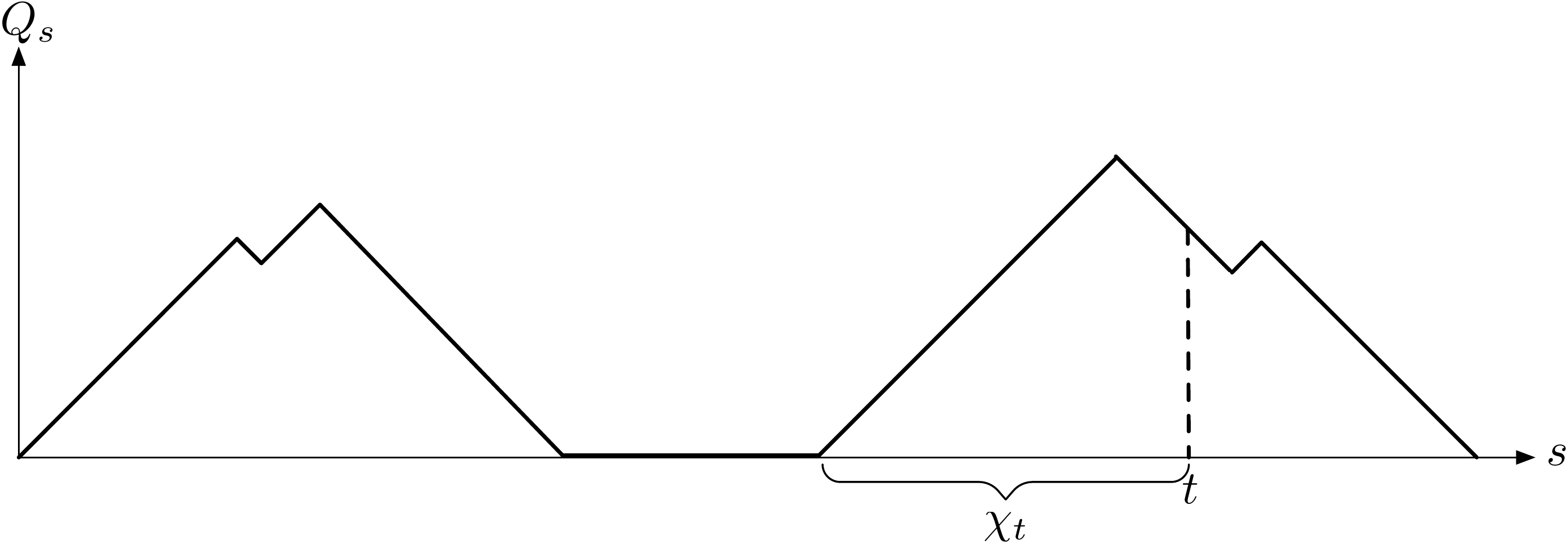} 
	\end{center} 
  \caption{The random variable $\chi_t$ associated to the RAP-modulated fluid queue $\{Q_s\}_{s\ge 0}$. Note that it corresponds to the elapsed time since $\{Q_s\}_{s\ge 0}$ left $0$ prior to $t$.}
  \label{fig:chit1}
\end{figure} 
 %
 %
Then, for $h> 0$,
\begin{align}
\lim_{t\rightarrow\infty} & \mathds{E}\left[\bm{A}_t\Indic{Q_t\in[x,x+h), \;\bm{A}_t\in\mathfrak{Z}^+ }\right]\nonumber\\
& = \lim_{t\rightarrow\infty} \int_{s=0}^{ t} \mathds{E}\left[\bm{A}_t\Indic{ Q_t\in(x,x+h),\; \bm{A}_t\in\mathfrak{Z}^+, \chi_t\in (s,s + \dd s) }\right]\nonumber\\
& = \lim_{t\rightarrow\infty} \int_{s=0}^{t} \mathds{E}\left[\mathds{E}\left[\left.\bm{A}_t\Indic{ Q_t\in[x,x+h),\; \bm{A}_t\in\mathfrak{Z}^+, \chi_t\in (s,s + \dd s) }\;\right| \mathcal{F}_{t-s}\right]\right]. \nonumber 
 \end{align} 
 Now, focusing on the inner expectation, we have 
 \begin{align} 
& \mathds{E}\left[\left.\bm{A}_t\Indic{ Q_t\in [x , x+h),\; \bm{A}_t\in\mathfrak{Z}^+, \chi_t\in (s,s + \dd s) }\;\right| \mathcal{F}_{t-s}\right] \nonumber \\
 & =  \mathds{E}\left[\left.\bm{A}_t\Indic{ Q_t\in [x , x+h),\; \bm{A}_t\in\mathfrak{Z}^+, Q_{t - s} = 0, \bm{A}_{t - s} \in \mathfrak{Z}^{-}, \bm{A}_{t - s \magenta{+} \dd s} \in \mathfrak{Z}^{+}} \;\right| \mathcal{F}_{t-s}\right] \nonumber \\
 & =  \mathds{E}_{\bm{A}_{t - s}}\left[\bm{A}_{s} \Indic{ Q_s \in [x , x+h),\; \bm{A}_s\in\mathfrak{Z}^+, \bm{A}_{0} \in \mathfrak{Z}^{-}, \bm{A}_{\dd s} \in \mathfrak{Z}^{+}} \right]\Indic{B_{t-s}}.   \label{eqn:mainaux4p5}
 \end{align} 
 where $B_t: = \{Q_{t}=0, \bm{A}_{t} \in \mathfrak{Z}^{-}\}$, $t\ge 0$. Thus, by Fubini's Theorem, we have for $h > 0$, 
\begin{align} 
	& \lim_{t\rightarrow\infty}  \mathds{E}\left[\bm{A}_t\Indic{Q_t\in[x,x+h), \;\bm{A}_t\in\mathfrak{Z}^+ }\right] \nonumber \\
	& =  \lim_{t\rightarrow\infty} \mathds{E}  \left[\int_{s=0}^{\infty}  \Indic{s\le t}\cdot\mathds{E}_{\bm{A}_{t - s}}\left[\bm{A}_{s} \Indic{ Q_s \in [x , x+h),\; \bm{A}_s\in\mathfrak{Z}^+, \bm{A}_{0} \in \mathfrak{Z}^{-}, \bm{A}_{\dd s} \in \mathfrak{Z}^{+}}\right]\Indic{B_{t-s}}\right]. \label{eqn:mainaux5} 
\end{align} 
In (\ref{eqn:mainaux5}), the Bounded Convergence Theorem allows us to switch the limit with the expectation and integral operators, change the variable $t-s$ to $t$ in the integrand (which is valid since both variables converge to $\infty$), and switch the limit with the expectation and integral operators once again. Thus,
\begin{align} 
	& \lim_{t\rightarrow\infty}  \mathds{E}\left[\bm{A}_t\Indic{Q_t\in[x,x+h), \;\bm{A}_t\in\mathfrak{Z}^+ }\right] \nonumber \\
	& =  \lim_{t\rightarrow\infty} \mathds{E}  \left[\int_{s=0}^{\infty}  \mathds{E}_{\bm{A}_{t }}\left[\bm{A}_{s} \Indic{ Q_s \in [x , x+h),\; \bm{A}_s\in\mathfrak{Z}^+, \bm{A}_{0} \in \mathfrak{Z}^{-}, \bm{A}_{\dd s} \in \mathfrak{Z}^{+}}\right]\Indic{B_{t}}\right]. \label{eqn:main} 
\end{align} 

Since the integrand in the expression above is computed on the event $\{\bm{A}_{t} \in\mathfrak{Z^-}\}$, w.l.o.g. suppose that $\bm{A}_{t} \in\mathfrak{Z}^-_i$ for some $i\in\mathcal{N}$. Then, 
\begin{align}
	 & \int_{s=0}^{\infty}  \mathds{E}_{\bm{A}_{t}}\left[\bm{A}_{s} \Indic{ Q_s \in [x , x+h),\; \bm{A}_s\in\mathfrak{Z}^+,  \bm{A}_{0} \in \mathfrak{Z}^{-}, \bm{A}_{\dd s} \in \mathfrak{Z}^{+}}\right]\Indic{B_t} \nonumber \\
	& =  \int_{s=0}^{\infty} \sum_{j \in \mathcal{N}} \mathds{E}_{\bm{\alpha}_{t}}\left[\bm{A}_{s} \Indic{ Q_s \in [x , x+h),\; \bm{A}_s\in\mathfrak{Z}^+}\right]\Indic{B_t} (\bm{A}_{t} \widehat{D}_{ij}^{-+}\bm{1}\dd s),  \nonumber 
\intertext{where $\bm{\alpha}_{t} := \frac{\bm{A}_{t} \widehat{D}^{-+}_{ij}}{\bm{A}_{t} \widehat{D}_{ij}^{-+}\bm{1}}$,}
	& = \sum_{j \in N} \mathds{E}_{\bm{\alpha}_{t}}\left[ \int_{s = 0}^{\infty} \bm{A}_{s} \Indic{ Q_s \in [x , x+h),\; \bm{A}_s\in\mathfrak{Z}^+} \dd s \right]\Indic{B_t} (\bm{A}_{t} \widehat{D}_{ij}^{-+}\bm{1}) \nonumber  \\
	& = \sum_{j \in N} \mathds{E}_{\bm{\alpha}_{t}}\left[ \sum_{u \in \mathcal{U}^x} \bm{A}_u h + o(h) \right]\Indic{B_t} (\bm{A}_{t} \widehat{D}_{ij}^{-+}\bm{1}) \label{eqn:integral}. 
\end{align} 
Substituting \eqref{eqn:integral} into \eqref{eqn:main} gives 
\begin{align} 
& \lim_{t\rightarrow\infty}  \mathds{E}\left[\bm{A}_t\Indic{Q_t\in[x,x+h), \;\bm{A}_t\in\mathfrak{Z}^+ }\right] \nonumber \\
%
%
& = \lim_{t \rightarrow \infty} \mathds{E} \left[\sum_{j \in N}\mathds{E}_{\bm{\alpha}_{t}}\left[ \sum_{u \in \mathcal{U}^x} \bm{A}_u h + o(h) \right]\Indic{B_t} (\bm{A}_{t} \widehat{D}_{ij}^{-+}\bm{1}) \right] \nonumber  \\
& = \lim_{t \rightarrow \infty}\mathds{E} \left[  \sum_{j \in N}\bm{\alpha}_{t} (e^{C^+ \Psi D^{-+}x}h)\Indic{B_t} (\bm{A}_{t} \widehat{D}_{ij}^{-+}\bm{1}) + o(h) \right] \nonumber \\ 
& = \lim_{t \rightarrow \infty}\mathds{E} \left[  \bm{A}_{t}D^{- +}  (e^{C^+ \Psi D^{-+}x}h)\Indic{B_t}  + o(h) \right] \nonumber \\ 
& = c_{-} \bm{v}_0D^{-+}e^{(C^+ + \Psi D^{-+})x}h + o(h), \nonumber 
\end{align} 
where (\ref{eq:limitOy1}) is used in the last equality. Equation (\ref{eq:piminus}) follows by analogous steps and arguments in the proof of Corollary \ref{cor:sumdx1}.
\end{proof}

To compute $c_-$, note that since $\bm{A}_t\bm{1}=1$ for all $t\ge 0$,
\begin{align}
1 & = \lim_{t\rightarrow\infty}\mathds{E}\left[\bm{A}_t\right]\bm{1}\nonumber\\
& = \left(\lim_{t\rightarrow\infty}\mathds{E}\left[\bm{A}_t\Indic{Q_t=0, \;\bm{A}_t\in\mathfrak{Z}^-}\right]\bm{1}\right) + \int_0^\infty \Pi^+(x)\bm{1}\dd x + \int_0^\infty \Pi^-(x)\bm{1}\dd x\nonumber\\
& = c_- + c_-\bm{v}_0\left[D^{-+}\int_0^\infty e^{(C^+ + \Psi D^{-+})x}\dd x\bm{1} + D^{-+}\int_0^\infty e^{(C^+ + \Psi D^{-+})x}\dd x(\Psi\bm{1})\right]\nonumber\\
& = c_-\left(1 - 2\bm{v}_0D^{-+}(C^+ + \Psi D^{-+})^{-1}\bm{1}\right),\quad\mbox{since }\Psi\bm{1}=\bm{1}.\label{eq:c0condition1}
\end{align}
By solving (\ref{eq:c0condition1}) for $c_-$ and using Theorem \ref{th:piplusminus1} we arrive at the following.
 \begin{Corollary}
 Let
 \begin{align}\label{eq:defpi1}\pi(x) = \lim_{t\rightarrow\infty}\frac{\dd }{\dd x }\mathds{P}(Q_t\in (0, x)),\qquad x\ge 0,\end{align}
which we call the \emph{stationary density} of $\{Q_t\}_{t\ge 0}$. Then,
\begin{align} 
 	\pi(x) & = 2c_-\bm{v}_0D^{-+}e^{(C^+ + \Psi D^{-+})x}\bm{1}, \\
	\lim_{t\rightarrow\infty}\mathds{P}(Q_t = 0, \bm{A}_t\in\mathfrak{Z}^-) & = c_-
\end{align} 
with $c_-=\left(1 - 2\bm{v}_0D^{-+}(C^+ + \Psi D^{-+})^{-1}\bm{1}\right)^{-1}$.
 \end{Corollary}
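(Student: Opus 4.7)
The plan is to assemble this corollary directly from Theorem~\ref{th:piplusminus1} together with the stability relation $\Psi\bm{1}=\bm{1}$ from \eqref{eq:psioneeqone}. First I would note that by the definition of $\pi(x)$ in \eqref{eq:defpi1}, and by splitting the event $\{Q_t\in(0,x)\}$ according to whether $\bm{A}_t\in\mathfrak{Z}^+$ or $\bm{A}_t\in\mathfrak{Z}^-$, we have the identity $\pi(x)=\Pi^+(x)\bm{1}+\Pi^-(x)\bm{1}$. Plugging in the expressions \eqref{eq:piplus} and \eqref{eq:piminus} from Theorem~\ref{th:piplusminus1} gives
\[
\pi(x)= c_-\bm{v}_0 D^{-+}e^{(C^+ + \Psi D^{-+})x}\bm{1} + c_-\bm{v}_0 D^{-+}e^{(C^+ + \Psi D^{-+})x}\Psi\bm{1}.
\]
Applying $\Psi\bm{1}=\bm{1}$ to the second term immediately yields the factor of $2$, and we obtain $\pi(x)= 2 c_-\bm{v}_0 D^{-+}e^{(C^+ + \Psi D^{-+})x}\bm{1}$, as required.

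For the second equation, $\lim_{t\rightarrow\infty}\mathds{P}(Q_t=0,\bm{A}_t\in\mathfrak{Z}^-)=c_-$ holds by definition of $c_-$ introduced just before \eqref{eq:statdown1}, so nothing needs proving beyond recording the definition. The remaining task is to pin down the explicit value of $c_-$. For this I would use the total-probability constraint from the normalization argument already sketched in \eqref{eq:c0condition1}: since $\bm{A}_t\bm{1}=1$ for every $t$, taking expectations and then $t\to\infty$ and decomposing according to whether $Q_t=0$ (with $\bm{A}_t\in\mathfrak{Z}^-$) or $Q_t>0$ (with $\bm{A}_t$ in either $\mathfrak{Z}^+$ or $\mathfrak{Z}^-$) yields
\[
1 = c_- + \int_0^\infty \Pi^+(x)\bm{1}\,\dd x + \int_0^\infty \Pi^-(x)\bm{1}\,\dd x.
\]
The remark preceding Theorem~\ref{th:piplusminus1} established that the dominant eigenvalue of $C^+ + \Psi D^{-+}$ has strictly negative real part, so $\int_0^\infty e^{(C^+ + \Psi D^{-+})x}\dd x = -(C^+ + \Psi D^{-+})^{-1}$ is a well-defined finite matrix. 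Substituting \eqref{eq:piplus}, \eqref{eq:piminus}, and again $\Psi\bm{1}=\bm{1}$, the two integrals collapse to $-2c_-\bm{v}_0 D^{-+}(C^+ + \Psi D^{-+})^{-1}\bm{1}$, giving \eqref{eq:c0condition1}, which we solve for $c_-$ to obtain $c_-=\bigl(1 - 2\bm{v}_0 D^{-+}(C^+ + \Psi D^{-+})^{-1}\bm{1}\bigr)^{-1}$.

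There is really no substantive obstacle here: all the analytical heavy lifting (identifying $\Pi^\pm$, establishing the spectral properties of $C^++\Psi D^{-+}$, and verifying $\Psi\bm{1}=\bm{1}$ under the stability condition) has been done earlier. The only mild subtlety is the legitimacy of exchanging the $t\to\infty$ limit with the integral in $x$ when computing the normalization, but this is justified by the dominated/bounded convergence argument implicit in the derivation of \eqref{eq:c0condition1} together with the exponential decay of $e^{(C^+ + \Psi D^{-+})x}$. Thus the corollary follows by direct substitution and simplification.
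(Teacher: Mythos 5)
Your proposal is correct and follows essentially the same route as the paper: decompose $\pi(x)$ into $\Pi^+(x)\bm{1}+\Pi^-(x)\bm{1}$ and substitute the formulas from Theorem~\ref{th:piplusminus1} together with $\Psi\bm{1}=\bm{1}$, then obtain the explicit value of $c_-$ from the normalization identity~\eqref{eq:c0condition1}. The paper's own derivation of the corollary is precisely the one-line invocation of~\eqref{eq:c0condition1} and Theorem~\ref{th:piplusminus1}, so there is nothing to add.
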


\section{Case $n^0>0$}
	\label{sec:withzerorates}

Now, we focus on the case 
that $\mathfrak{Z}^0$ is not empty and thus, $\mathcal{R}$ may have some piecewise-constant intervals. In this framework, the concepts of up-down and down-up peaks used throughout Section \ref{sec:nozerorates} are not sufficient. To address this issue, for $k\in\{+,-\}$ 
define
\[
	\rho^*_{k} :=\rho_{k} + \rho_0\circ\theta_{\rho_{k}},
\]
{where $\rho_\ell := \inf\{s\ge 0: \bm{A}_s\notin\mathfrak{Z}^\ell\}$ for $\ell\in\{+,-,0\}$. Then $\rho^*_k$ corresponds to 
the first exit time from $\mathfrak{Z}^0$ following an exit from $\mathfrak{Z}^k$.} Note that $\rho^*_{k}= \rho_k$ if and only if $\bm{A}_{\rho_k}\notin\mathfrak{Z}^0$.
%
\begin{Lemma}
	\label{lem:general1}
Let $k,\ell\in\{+,-\}$ and let $\bm{\alpha}\in\mathfrak{Z}^{k}$. Then, for all $x\ge 0$.
\begin{align*}
\mathds{E}_{\bm{\alpha}}&\left[\bm{A}_{\rho^*_{k}}\Indic{\rho_k\in(x,x+\dd x),\;\bm{A}_{\rho_k}\in\mathfrak{Z}^{0}, \;\bm{A}_{\rho^*_k}\in\mathfrak{Z}^{\ell}}\right] =\bm{\alpha}e^{C^{k}x}D^{k0}(-C^0)^{-1}D^{0\ell}\dd x\end{align*}
\end{Lemma}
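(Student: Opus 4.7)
The plan is to prove this via a two-stage decomposition of the path up to time $\rho^*_k$, by conditioning on $\mathcal{F}_{\rho_k}$ and invoking the strong Markov property at $\rho_k$. On the event $\{\bm{A}_{\rho_k}\in\mathfrak{Z}^0\}$, the definition $\rho^*_k = \rho_k + \rho_0\circ\theta_{\rho_k}$ lets us rewrite $\bm{A}_{\rho^*_k} = \bm{A}_{\rho_0}\circ\theta_{\rho_k}$ using the identity $\bm{A}_t\circ\theta_s = \bm{A}_{t+s}$ stated in the preliminaries. Thus the inner $\mathcal{F}_{\rho_k}$-conditional expectation becomes an expression that, by the strong Markov property of $\mathcal{X}$, equals $\mathds{E}_{\bm{A}_{\rho_k}}\!\left[\bm{A}_{\rho_0}\Indic{\bm{A}_{\rho_0}\in\mathfrak{Z}^\ell}\right]$, which is now an expectation of the type already handled in Lemma~\ref{lem:basicpropFRAP1}.

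First, I would apply \eqref{eq:meanafterexiting1} with the roles $(k,\ell)\leftrightarrow (0,\ell)$: for every $\bm{\beta}\in\mathfrak{Z}^0$,
\[
\mathds{E}_{\bm{\beta}}\!\left[\bm{A}_{\rho_0}\Indic{\bm{A}_{\rho_0}\in\mathfrak{Z}^\ell}\right] = \bm{\beta}(-C^0)^{-1}D^{0\ell}.
\]
Plugging this in and pulling the (deterministic) matrix $(-C^0)^{-1}D^{0\ell}$ out, the remaining expression is
\[
\mathds{E}_{\bm{\alpha}}\!\left[\bm{A}_{\rho_k}\Indic{\rho_k\in(x,x+\dd x),\;\bm{A}_{\rho_k}\in\mathfrak{Z}^0}\right](-C^0)^{-1}D^{0\ell}.
\]
Second, I would apply \eqref{eq:auxdensity1} from Lemma~\ref{lem:basicpropFRAP1} with $(k,\ell)\leftrightarrow(k,0)$ to the expectation on the left, which yields $\bm{\alpha}e^{C^k x}D^{k0}\dd x$; substituting gives precisely the claimed formula $\bm{\alpha}e^{C^k x}D^{k0}(-C^0)^{-1}D^{0\ell}\dd x$.

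I do not anticipate any genuine obstacle here; the result is essentially a corollary of Lemma~\ref{lem:basicpropFRAP1} combined with the strong Markov property. The only point that deserves care is the bookkeeping around the shift operator — verifying that on the set $\{\bm{A}_{\rho_k}\in\mathfrak{Z}^0\}$ the quantity $\bm{A}_{\rho^*_k}$ truly coincides with $\bm{A}_{\rho_0}\circ\theta_{\rho_k}$, and that the indicator $\Indic{\bm{A}_{\rho^*_k}\in\mathfrak{Z}^\ell}$ factors as a $\theta_{\rho_k}$-shifted, $\mathcal{F}$-measurable functional to which the strong Markov property can be applied. Apart from that, the proof is a direct chain of equalities analogous to the one used in the proof of \eqref{eq:meanafterexiting1} itself.
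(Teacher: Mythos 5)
Your proposal is correct and follows essentially the same chain of equalities as the paper's proof: condition on $\mathcal{F}_{\rho_k}$, invoke the strong Markov property at $\rho_k$ to reduce the inner conditional expectation to $\mathds{E}_{\bm{A}_{\rho_k}}\left[\bm{A}_{\rho_0}\Indic{\bm{A}_{\rho_0}\in\mathfrak{Z}^\ell}\right]$, then apply \eqref{eq:meanafterexiting1} with initial set $\mathfrak{Z}^0$ and finally \eqref{eq:auxdensity1} with target set $\mathfrak{Z}^0$. The bookkeeping detail you flag about $\bm{A}_{\rho^*_k}=\bm{A}_{\rho_0}\circ\theta_{\rho_k}$ on $\{\bm{A}_{\rho_k}\in\mathfrak{Z}^0\}$ is indeed the only subtlety, and it goes through exactly as you describe.
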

\begin{proof}
Using the strong Markov property and Lemma \ref{lem:basicpropFRAP1}, we obtain
\begin{align*}
\mathds{E}_{\bm{\alpha}}&\left[\bm{A}_{\rho^*_k}\Indic{\rho_k\in(x,x+\dd x),\; \bm{A}_{\rho_k}\in\mathfrak{Z}^{0}, \;\bm{A}_{\rho^*_k}\in\mathfrak{Z}^{\ell}}\right]\\
& = \mathds{E}_{\bm{\alpha}}\left[\mathds{E}_{\bm{\alpha}}\left[\left.\bm{A}_{\rho^*_k}\Indic{\bm{A}_{\rho^*_k}\in\mathfrak{Z}^{\ell}}\;\right| \mathcal{F}_{\rho_k} \right]\Indic{\rho_k\in(x,x+\dd x),\; \bm{A}_{\rho_k}\in\mathfrak{Z}^0}\right]\\
& = \mathds{E}_{\bm{\alpha}}\left[\mathds{E}_{\bm{A}_{\rho_k}}\left[\bm{A}_{\rho_0}\Indic{\bm{A}_{\rho_0}\in\mathfrak{Z}^{\ell}}\right] \Indic{\rho_k\in(x,x+\dd x),\; \bm{A}_{\rho_k}\in\mathfrak{Z}^0}\right]\\
& = \mathds{E}_{\bm{\alpha}}\left[\left(\bm{A}_{\rho_k}(-C^0)^{-1}D^{0\ell}\right) \Indic{\rho_k\in(x,x+\dd x),\; \bm{A}_{\rho_k}\in\mathfrak{Z}^0}\right]\\
& = \mathds{E}_{\bm{\alpha}}\left[\bm{A}_{\rho_k} \Indic{\rho_k\in(x,x+\dd x), \;\bm{A}_{\rho_k}\in\mathfrak{Z}^0}\right](-C^0)^{-1}D^{0\ell}\\
& = \bm{\alpha}e^{C^{k} x}D^{k0}(-C^0)^{-1}D^{0\ell}\dd x.
\end{align*}
\end{proof}

Lemma \ref{lem:general1}  is analogous to \emph{censoring} in the context of stochastic fluid processes. In general terms, it concerns the inspection of certain aspects of a process before and after some interval; in the case of Lemma \ref{lem:general1} such an interval is $(\rho_k, \rho^*_k)$, during which $\bm{A}_{t} \in \mathfrak{Z}^0$, and the aspects inspected are $\rho_k$ and $\bm{A}_{\rho^*_k}$. 

When $n^0>0$, we say that an \emph{up-down peak} of $\mathcal{R}$ occurs at time $t>0$ if there exists some $s\in(0, t)$ such that $\rho^*_{+}\circ\theta_{s}=t-s$ and $\bm{A}_t\in\mathfrak{Z}_-$. That is, an up-down peak results from $\{\bm{A}_t\}_{t\ge 0}$ exiting $\mathfrak{Z}^+$ and either going directly to $\mathfrak{Z}^-$, or going through $\mathfrak{Z}^0$ and jumping later to $\mathfrak{Z}^-$. Analogously, a \emph{down-up peak} happens at time $t>0$ if there exists some $s\in(0, t)$ such that $\rho^*_{-}\circ\theta_{s}=t-s$ and $\bm{A}_t\in\mathfrak{Z}_+$. 

In the following we compute the expected value of the orbit at a peak for the general setting.

\begin{Corollary}\label{cor:general1}
Let $k,\ell\in\{+,-\}$, $k\neq \ell$, and $\bm{\alpha}\in\mathfrak{Z}^k$. Then,
\begin{align}
\label{eq:Dstaraux3}
	\mathds{E}_{\bm{\alpha}}\left[\bm{A}_{\rho^*_k}\Indic{\rho_k\in(x,x+\dd x), \;\bm{A}_{\rho^*_k}\in\mathfrak{Z}^{\ell}}\right]=\bm{\alpha}e^{C^{k} x}D^{k\ell*}\dd x,\end{align}
where $D^{k\ell*} = D^{k\ell} + D^{k 0}(-C^0)^{-1}D^{0\ell}.$ 
\end{Corollary}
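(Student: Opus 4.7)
The plan is to split the event $\{\rho_k \in (x, x+\dd x),\, \bm{A}_{\rho^*_k} \in \mathfrak{Z}^\ell\}$ according to where $\orbit$ lands immediately after leaving $\mathfrak{Z}^k$, namely into the disjoint sub-events $\{\bm{A}_{\rho_k} \in \mathfrak{Z}^\ell\}$ and $\{\bm{A}_{\rho_k} \in \mathfrak{Z}^0\}$. On the first sub-event the orbit jumps directly from $\mathfrak{Z}^k$ to $\mathfrak{Z}^\ell$, so $\rho^*_k = \rho_k$ and $\bm{A}_{\rho^*_k} = \bm{A}_{\rho_k}$. On the second sub-event the orbit first visits $\mathfrak{Z}^0$ and only later exits it into $\mathfrak{Z}^\ell$, which is precisely the situation treated by Lemma~\ref{lem:general1}.

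For the direct sub-event I would invoke equation~\eqref{eq:auxdensity1} of Lemma~\ref{lem:basicpropFRAP1}, which immediately gives
\begin{align*}
\mathds{E}_{\bm{\alpha}}\!\left[\bm{A}_{\rho^*_k}\Indic{\rho_k\in(x,x+\dd x),\, \bm{A}_{\rho_k}\in\mathfrak{Z}^\ell}\right] = \bm{\alpha}\, e^{C^k x}\, D^{k\ell}\,\dd x.
\end{align*}
For the indirect sub-event Lemma~\ref{lem:general1} directly yields
\begin{align*}
\mathds{E}_{\bm{\alpha}}\!\left[\bm{A}_{\rho^*_k}\Indic{\rho_k\in(x,x+\dd x),\, \bm{A}_{\rho_k}\in\mathfrak{Z}^0,\, \bm{A}_{\rho^*_k}\in\mathfrak{Z}^\ell}\right] = \bm{\alpha}\, e^{C^k x}\, D^{k0}(-C^0)^{-1}D^{0\ell}\,\dd x.
\end{align*}
Adding the two contributions, factoring out $\bm{\alpha} e^{C^k x}\dd x$, and recognising the bracketed matrix as $D^{k\ell*} = D^{k\ell} + D^{k0}(-C^0)^{-1}D^{0\ell}$ yields \eqref{eq:Dstaraux3}.

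This proof is essentially a two-case decomposition, so there is no substantial technical obstacle; the work has already been done in Lemma~\ref{lem:basicpropFRAP1} and Lemma~\ref{lem:general1}. The only minor point worth articulating is that the state space $\mathfrak{Z}^k\cup\mathfrak{Z}^0\cup\mathfrak{Z}^\ell$ for the relevant jumps covers all possibilities (since a jump from $\mathfrak{Z}^k$ must land in one of the three $\mathfrak{Z}^m$, $m\in\mathcal{S}$), so the two sub-events indeed partition $\{\rho_k \in (x,x+\dd x),\, \bm{A}_{\rho^*_k}\in\mathfrak{Z}^\ell\}$ up to the possibility of longer excursions through $\mathfrak{Z}^0$; however, by the definition of $\rho^*_k$ as the first exit from $\mathfrak{Z}^0$ after leaving $\mathfrak{Z}^k$, and the fact that $\orbit$ can only be in one of $\mathfrak{Z}^+,\mathfrak{Z}^-,\mathfrak{Z}^0$ at any instant, the decomposition is indeed exhaustive.
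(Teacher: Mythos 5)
Your proof is correct and follows exactly the paper's approach: condition on whether $\bm{A}_{\rho_k}$ lands directly in $\mathfrak{Z}^\ell$ (handled by \eqref{eq:auxdensity1} of Lemma~\ref{lem:basicpropFRAP1}) or first in $\mathfrak{Z}^0$ (handled by Lemma~\ref{lem:general1}), then sum. The extra remark about exhaustiveness of the decomposition, while not in the paper's one-line proof, is sound since $\bm{A}_{\rho_k}\notin\mathfrak{Z}^k$ by definition of $\rho_k$.
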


\begin{proof}
Condition the indicator function on the LHS of (\ref{eq:Dstaraux3}) on the events $\{\bm{A}_{\rho_k}\in\mathfrak{Z}^{\ell}\}$ and $\{\bm{A}_{\rho_k}\in\mathfrak{Z}^{0}\}$. The result follows by Lemma \ref{lem:basicpropFRAP1} and Lemma \ref{lem:general1}.
\end{proof}

Now that we have a notion for \emph{peaks}, we develop next a way to handle the intervals between peaks in the case $n^0>0$. For instance, suppose that $\bm{A}_0\in\mathfrak{Z}_+$. A first step would be to compute the expected value of $\orbit$ at the instant $\mathcal{R}$ reaches some level $x\ge 0$ given that no peak has occured until that epoch, similar in spirit to Theorem \ref{th:expandedB1}. Note that up to the time before the first up-down peak, the orbit process can perform jumps from/to $\mathfrak{Z}^+$ and $\mathfrak{Z}^0$, so the analysis differs from that of Section \ref{sec:nozerorates}. However, it turns out that the solution to this problem has a structure similar to that of Theorem \ref{th:expandedB1} once we censor the occupation times in $\mathfrak{Z}^0$, as we will see next.

For $t \geq 0$, define
 \begin{align*}
  W_t & := \int_0^t \Indic{\bm{A}_s\in\mathfrak{Z}^+\cup\mathfrak{Z}^-}\dd s, \quad \zeta_x := \inf\left\{t\ge 0:W_t > x\right\}.  
 \end{align*}
Thus, $W_t$ corresponds to the \emph{occupation time of $\{\bm{A}_t\}_{t\ge 0}$ in $\mathfrak{Z}^+\cup\mathfrak{Z}^-$ up to time $t$}, while $\zeta_x$ corresponds to the \emph{necessary time to reach $x$ units of occupation time in $\mathfrak{Z}^+\cup\mathfrak{Z}^-$}. The process $\{W_t\}_{t\ge 0}$ is continuous nondecreasing and $\{\zeta_x\}_{x\ge 0}$ is c\`adl\`ag. {A process analogous to $\{W_t\}_{t\ge 0}$ has been used in the stochastic fluid model literature, it being regarded as the total amount of fluid that has flowed into or out of the system \cite{bean2005hitting}.}

By Assumption~\ref{assump:pm1}, for $t,x\ge 0$ and $k\in\{+,-\}$
\begin{align}
	\label{eq:leveltime2}
	\{\bm{A}_t\in\mathfrak{Z}^k\cup\mathfrak{Z}^0\}\cap\left\{\left|R_{t+\zeta_x\circ\theta_t}-R_t\right|=x\right\}=\left\{\bm{A}_u\in\mathfrak{Z}^k\cup\mathfrak{Z}^0\;\forall u\in\left[t,t+\zeta_x^k\circ\theta_t\right)\right\},\end{align}
a relation similar to (\ref{eq:leveltime1}). In the following we investigate more about the event (\ref{eq:leveltime2}). 
\begin{theorem}
	\label{th:meanmixedUandZ}
Let $\bm{\alpha}\in\mathfrak{Z}^k$, $k\in\{+,-\}$ and $x\ge 0$. Then
\begin{align}
	\label{eq:meanUandZ} 
	\mathds{E}_{\bm{\alpha}}\left[\bm{A}_{\zeta_x}\Indic{\bm{A}_s\in\mathfrak{Z}^k\cup\mathfrak{Z}^0\;\forall s\in [0,\zeta_x]}\right]= \bm{\alpha}e^{C^{k*}x},
\end{align}
where $C^{k*}  = C^k + D^{k0}(-C^0)^{-1}D^{0k}.$
\end{theorem}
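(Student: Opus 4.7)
The plan is to mirror the induction-then-integral-equation strategy of Theorem \ref{th:expandedB1}, inducting on the number of excursions into $\mathfrak{Z}^0$ before the occupation-time-based stopping point $\zeta_x$. For $t\ge 0$, let $M_t$ count the number of entries of $\orbit$ into $\mathfrak{Z}^0$ from $\mathfrak{Z}^k$ during $[0,t]$, and set
\[E_{x,n} := \{\bm{A}_s\in\mathfrak{Z}^k\cup\mathfrak{Z}^0\;\forall s\in[0,\zeta_x],\; M_{\zeta_x}=n\}.\]
I will show by induction that $\mathds{E}_{\bm{\alpha}}[\bm{A}_{\zeta_x}\Indic{E_{x,n}}] = \bm{\alpha}\Theta_n(x)$ for unique continuous matrices $\{\Theta_n(\cdot)\}_{n\ge 0}$ defined recursively. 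Uniqueness and continuity of the $\Theta_n$ will be taken for granted, appealing to the remark following Theorem \ref{th:expandedB1}.

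For the base case $n=0$, the orbit remains in $\mathfrak{Z}^k$ throughout $[0,\zeta_x]$, so by Assumption \ref{assump:pm1} the level moves at constant unit rate and hence $\zeta_x = x$ on this event; Theorem \ref{th:expandedB1} then gives $\Theta_0(x) = e^{C^k x}$. For the inductive step, I would decompose a path on $E_{x,n+1}$ at the first exit time $\rho_k$ from $\mathfrak{Z}^k$: on this event $\bm{A}_{\rho_k}\in\mathfrak{Z}^0$, and $\bm{A}_{\rho_k^*}$ is forced to lie in $\mathfrak{Z}^k$ for $\orbit$ to remain in $\mathfrak{Z}^k\cup\mathfrak{Z}^0$. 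Conditioning on $\rho_k\in(y,y+\dd y)$, applying the strong Markov property at $\rho_k^*$, and using Lemma \ref{lem:general1} to absorb the $\mathfrak{Z}^0$-sojourn, I would obtain
\[\mathds{E}_{\bm{\alpha}}[\bm{A}_{\zeta_x}\Indic{E_{x,n+1}\cap\{\rho_k\in(y,y+\dd y)\}}] = \bm{\alpha}e^{C^k y}D^{k0}(-C^0)^{-1}D^{0k}\Theta_n(x-y)\dd y,\]
so that $\Theta_{n+1}(x) = \int_0^x e^{C^k y}D^{k0}(-C^0)^{-1}D^{0k}\Theta_n(x-y)\dd y$. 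Crucially, because $W$ does not advance while $\orbit$ is in $\mathfrak{Z}^0$, the remaining target occupation time after $\rho_k^*$ is still $x-y$, which is what legitimises pairing the shift with $\Theta_n(x-y)$. Summing over $n$ and using bounded convergence gives $\Theta(x):=\sum_{n\ge 0}\Theta_n(x)$ satisfying
\[\Theta(x) = e^{C^k x} + \int_0^x e^{C^k y}\bigl[D^{k0}(-C^0)^{-1}D^{0k}\bigr]\Theta(x-y)\dd y,\]
and Theorem \ref{th:explicitphi} with $A=C^k$, $B=D^{k0}(-C^0)^{-1}D^{0k}$ then yields $\Theta(x)=e^{C^{k*}x}$, as required.

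The main obstacle I foresee is the bookkeeping around the time-change, more than anything conceptual: namely, verifying that on $E_{x,n+1}\cap\{\rho_k\in(y,y+\dd y)\}$ the shifted path $\theta_{\rho_k^*}\mathcal{X}$ genuinely lies in the analogous event $E_{x-y,n}$ starting from $\bm{A}_{\rho_k^*}\in\mathfrak{Z}^k$, so that Lemma \ref{lem:general1} may be iterated cleanly against $\Theta_n$. A subtlety worth checking is that the orbit may perform intra-$\mathfrak{Z}^k$ jumps (via the matrices $\widehat C^k_{ij}$) during the sojourn in $\mathfrak{Z}^k$, but these are already absorbed into $e^{C^k y}$ through the base case, exactly as in the proof of Theorem \ref{th:expandedB1}.
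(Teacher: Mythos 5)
Your proposal is correct and follows essentially the same route as the paper: the paper inducts on $L_x^*$, the count of entries into $\mathfrak{Z}^0$ from $\mathfrak{Z}^k$ up to $\zeta_x$ (your $M_{\zeta_x}$), decomposes at the first exit from $\mathfrak{Z}^0$ after the first entry (your $\rho_k^*$, the paper's $\zeta_{s_0^*}$ in the $\dd r$ limit), invokes Lemma~\ref{lem:general1} to absorb the $\mathfrak{Z}^0$-sojourn, obtains the same convolution recursion, and closes with Theorem~\ref{th:explicitphi}. The observations you flag as potential subtleties — that $W$ does not advance in $\mathfrak{Z}^0$ so the residual occupation-time budget remains $x-y$, and that intra-$\mathfrak{Z}^k$ jumps are already packaged into $e^{C^k y}$ via Theorem~\ref{th:expandedB1} — are exactly the points the paper's proof relies on implicitly.
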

\begin{proof}
Let $s_0^*:=\inf\{y>0: \bm{A}_{\zeta_y}\in\mathfrak{Z}^0\}=\inf\{t>0: \bm{A}_t\in\mathfrak{Z}^0\}$, and for $x\ge 0$ let
\[
	L_x^*:=\#\{s\in (0, \zeta_x]:\bm{A}_{s^-}\in\mathfrak{Z}^k, \bm{A}_s\in\mathfrak{Z}^0\}.
\]
We claim that there exist continuous matrices $\{\Sigma^*_n(\cdot)\}_{n\ge 0}$ such that for all $n\ge 0$,
\begin{align}
	\label{eq:Sigmanstar1}
\mathds{E}_{\bm{\alpha}}\left[\bm{A}_{{\zeta_x}}\Indic{\bm{A}_s\in\mathfrak{Z}^k\cup\mathfrak{Z}^0\;\forall s\in[0,\zeta_x],\; L_x^*=n}\right]=\bm{\alpha}\Sigma_n^*(x).
\end{align}
To prove this we use induction, following analogous steps to {those} in the proof of Theorem~\ref{th:expandedB1}.

{\bf Case $n=0$.} As ${\zeta_x}=x$ on $\{L^*_x=0\}$, we have 
\begin{align*}
\mathds{E}_{\bm{\alpha}}\left[\bm{A}_{{\zeta_x}}\Indic{\bm{A}_s\in\mathfrak{Z}^k\cup\mathfrak{Z}^0\;\forall s\in[0,\zeta_x],\; L_x^*=0}\right] = \mathds{E}_{\bm{\alpha}}\left[\bm{A}_{{\zeta_x}}\Indic{\bm{A}_s\in\mathfrak{Z}^k\;\forall s\in[0,x]}\right] = \bm{\alpha} e^{C^k x},
 \end{align*}
 where Theorem \ref{th:expandedB1} is used in the last equality. Thus, (\ref{eq:Sigmanstar1}) follows by choosing $\Sigma_0^*(x)= e^{C^k x}$, and this solution is unique by Lemma \ref{lem:bminimality}.

{\bf Inductive part.}
Suppose that (\ref{eq:Sigmanstar1}) holds for some $n\ge 0$. Then, for $r\in(0,x)$
\begin{align*}
\mathds{E}_{\bm{\alpha}}&\left[\bm{A}_{{\zeta_x}}\Indic{\bm{A}_s\in\mathfrak{Z}^k\cup\mathfrak{Z}^0\;\forall s\in[0,\zeta_x],\; L_x^*=n+1, s_0^*\in (r- \dd r, r)}\right]\\
& = \mathds{E}_{\bm{\alpha}}\left[\mathds{E}_{\bm{\alpha}}\left[\left.\bm{A}_{\zeta_x}\Indic{\bm{A}_s\in\mathfrak{Z}^k\cup\mathfrak{Z}^0\;\forall s\in[0,\zeta_x],\; L_x^*=n+1}\;\right|\mathcal{F}_{\zeta_r}\right]\Indic{s_0^*\in (r- \dd r, r),\; \bm{A}_{\zeta_r}\in\mathfrak{Z}^k}\right]\\
& = \mathds{E}_{\bm{\alpha}}\left[\mathds{E}_{\bm{A}_{\zeta_r}}\left[\bm{A}_{\zeta_{x-r}}\Indic{\bm{A}_s\in\mathfrak{Z}^k\cup\mathfrak{Z}^0\;\forall s\in[0,\zeta_{x-r}],\; L_{x-r}^*=n}\right]\Indic{s_0^*\in (r- \dd r, r),\; \bm{A}_{\zeta_r}\in\mathfrak{Z}^k}\right]\\
& = \mathds{E}_{\bm{\alpha}}\left[\left(\bm{A}_{\zeta_r}\Sigma^*_n(x-r)\right)\Indic{s_0^*\in (r- \dd r, r), \;\bm{A}_{\zeta_r}\in\mathfrak{Z}^k}\right]\\
& = \mathds{E}_{\bm{\alpha}}\left[\bm{A}_{\zeta_{s_0}}\Indic{s_0^*\in (r- \dd r, r), \;\bm{A}_{\zeta_{s_0}}\in\mathfrak{Z}^k}\right]\Sigma^*_n(x-r)\\
& = \bm{\alpha}e^{C^k r}D^{k0} (-C^0)^{-1}D^{0k}\Sigma^*_n(x-r)\dd r,
\end{align*}
where 
Lemma \ref{lem:general1} was used in the last equality.

Thus, (\ref{eq:Sigmanstar1}) recursively holds for $n\ge 0$ as
\begin{align}\label{eq:Sigmastarnintegral}\Sigma_{n+1}^*(x)=\int_0^x e^{C^k r}D^{k0} (-C^0)^{-1}D^{0k}\Sigma_{n}^*(x-r)\dd r,
\end{align}
which is unique by Lemma \ref{lem:bminimality}. Summing (\ref{eq:Sigmastarnintegral}) over $n\ge 0$, using Fubini's Theorem and Theorem \ref{th:explicitphi} we get that
\[\mathds{E}_{\bm{\alpha}}\left[\bm{A}_{\zeta_x}\Indic{\bm{A}_s\in\mathfrak{Z}^k\cup\mathfrak{Z}^0\;\forall s\in[0,\zeta_x]}\right]= \bm{\alpha}\Sigma^*(x),\]
where $\Sigma^*(x)=e^{C^{k*}x}$.
\end{proof}

Using Corollary \ref{cor:general1} and Theorem \ref{th:meanmixedUandZ} and the generalized concept of peaks we can develop the theory of first passages for general RAP-modulated fluid processes in virtually the same way as in Sections \ref{sec:firstzempty} and \ref{sec:minimum} and part of Section \ref{sec:stationaryz0}. We present the final formulae below.

\begin{theorem}
	\label{th:downcrossingmulti1}
Let $\mathcal{X}=(\mathcal{R},\bm{\mathcal{A}})$ be a general RAP-modulated fluid process with $n^0>0$. Let $\bm{\alpha}\in\mathfrak{Z}^+$.
Then 
\[\mathds{E}_{\bm{\alpha}}\left[\bm{A}_{\tau_-}\Indic{\Omega_\tau}\right] = \bm{\alpha}\Psi^*\]
where $\Psi^*=\lim_{n\rightarrow\infty}\Psi_n^*$ and $\{\Psi_n^*\}_{n\ge 0}$ are recursively computed by setting $\Psi_0^*=0$ and solving
\begin{align}
	\label{eq:FP3algmulti}
	C^{+*}\Psi_{n+1}^* + \Psi_{n+1}^*C^{-*}= - D^{+-*}-\Psi_{n}^*D^{-+*}\Psi_{n}^*.
\end{align}
Furthermore, for $x\ge 0$
\begin{align*}
  \mathds{E}_{\bm{\alpha}}\left[\bm{O}_x\Indic{\tau_-^x <\infty}\right]&=\bm{\alpha}\Psi^*e^{(C^{-*} +D^{-+*}\Psi^*) x},\\
\mathds{E}_{\bm{\alpha}}\left[\sum_{u\in \mathcal{U}^x}\bm{A}_u\right]  & =\bm{\alpha}e^{(C^{+*} + \Psi^*D^{-+*})x},\\
\mathds{E}_{\bm{\alpha}}\left[\sum_{u\in \mathcal{D}^x}\bm{A}_u\right]  &= \bm{\alpha}e^{(C^{+*} + \Psi^*D^{-+*})x}\Psi^*.
\end{align*}
\end{theorem}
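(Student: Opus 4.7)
The plan is to reproduce the chain of arguments in Sections \ref{sec:firstzempty}, \ref{sec:minimum}, and \ref{sec:stationaryz0}, but with each occurrence of $C^k$ replaced by $C^{k*}$ and each occurrence of $D^{k\ell}$ replaced by $D^{k\ell*}$. The starred matrices are precisely the ones that arise once we \emph{censor out} the sojourns in $\mathfrak{Z}^0$: by Corollary~\ref{cor:general1}, the jump from $\mathfrak{Z}^k$ to $\mathfrak{Z}^\ell$ (possibly through $\mathfrak{Z}^0$) is governed by $D^{k\ell*}$, while by Theorem~\ref{th:meanmixedUandZ} the evolution of $\orbit$ between consecutive peaks, expressed in terms of the occupation-time clock $W_t$, is governed by $C^{k*}$.

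For the first identity, I would begin by defining the stopping times $\tau_-$ and the up-/down-peak epochs using $\rho^*_\pm$ as in the paragraph preceding Corollary~\ref{cor:general1}, then partition $\Omega_\tau$ into the increasing sequence $\Omega_n^*$ of paths of peak-complexity at most $n$, in direct analogy with the $\Omega_n$ of Section~\ref{sec:firstzempty}. Using the occupation-time clock $\zeta$ together with (\ref{eq:leveltime2}), the \emph{no-peak} segments of $\orbit$ (either in $\mathfrak{Z}^+\cup\mathfrak{Z}^0$ or in $\mathfrak{Z}^-\cup\mathfrak{Z}^0$) contribute exactly $e^{C^{+*}y}$ resp.\ $e^{C^{-*}y}$ to the conditional expectation of $\bm{A}_{\tau_-}$, thanks to Theorem~\ref{th:meanmixedUandZ}. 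Each up-down (resp.\ down-up) peak contributes a factor $D^{+-*}\dd y$ (resp.\ $D^{-+*}\dd y$) via Corollary~\ref{cor:general1}. Running the induction on $n$ exactly as in the proof of Theorem~\ref{th:Psinintegral3}, the recursion
\begin{align*}
\Psi_{n+1}^* = \int_0^\infty e^{C^{+*}y}\!\left(D^{+-*} + \Psi_n^*\,D^{-+*}\,\Psi_n^*\right) e^{C^{-*}y}\,\dd y
\end{align*}
appears, from which the Sylvester form (\ref{eq:FP3algmulti}) follows by the same integration-by-parts argument used to obtain (\ref{eq:FP3alg}). Passing $n\to\infty$ via the Bounded Convergence Theorem together with $\Omega_\tau=\cup_n \Omega_n^*$ gives the first claim, provided one knows that the dominant eigenvalues of $C^{\pm*}$ have strictly negative real parts; this last fact is verified in the same manner as in the proof of Lemma~\ref{lem:basicpropFRAP1}, by showing that $\bm{\alpha}e^{C^{k*}x}\to\bm{0}$ and invoking Lemma~\ref{lem:bminimality}.

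The remaining three formulas are then proved by mimicking, mutatis mutandis, the proofs of Theorem~\ref{th:Gxbeta}, Theorem~\ref{th:upcrossings3} and Corollary~\ref{cor:sumdx1} respectively. In each of these, the argument is driven by inductively splitting a trajectory at peak epochs (or at record downcrossings), and the ingredients used are exactly the censored analogues provided by Corollary~\ref{cor:general1} and Theorem~\ref{th:meanmixedUandZ}, together with the uniqueness lemma \ref{lem:bminimality} and Theorem~\ref{th:explicitphi} to identify the final integral equation as a matrix exponential. For instance, the downward-record proof yields
\begin{align*}
\Phi^*(x) = e^{C^{-*}x} + \int_0^x e^{C^{-*}y}\,D^{-+*}\,\Psi^*\,\Phi^*(x-y)\,\dd y
\end{align*}
from which Theorem~\ref{th:explicitphi} produces $\Phi^*(x) = e^{(C^{-*}+D^{-+*}\Psi^*)x}$; the upcrossing and downcrossing counts are handled analogously.

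The genuine work, and the place where one must be careful, is at the base case of each induction: one must verify that the starred quantities indeed capture every possible path segment between consecutive peaks. Concretely, on a no-peak interval the orbit may jump back and forth between $\mathfrak{Z}^k$ and $\mathfrak{Z}^0$ arbitrarily many times before the first peak, and Theorem~\ref{th:meanmixedUandZ} is exactly the tool that sums over this combinatorial explosion (its induction over $L_x^*$ does the heavy lifting). Likewise, Corollary~\ref{cor:general1} guarantees that any $\mathfrak{Z}^k\!\to\!\mathfrak{Z}^\ell$ peak, direct or via $\mathfrak{Z}^0$, is summarised by $D^{k\ell*}$. Once these two facts are in hand, all four identities of Theorem~\ref{th:downcrossingmulti1} follow by the same inductive template as in Section~\ref{sec:nozerorates}; by the remark following Theorem~\ref{th:expandedB1}, the uniqueness and boundedness steps may be omitted and one only writes the base case, the inductive step, and the identification of the integral equation.
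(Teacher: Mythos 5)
Your proposal is correct and takes the same approach that the paper intends: the paper explicitly declines to write out the proof, stating only that one can ``develop the theory of first passages for general RAP-modulated fluid processes in virtually the same way as in Sections 4.1, 4.2 and part of Section 4.3'' using Corollary~\ref{cor:general1} and Theorem~\ref{th:meanmixedUandZ}. You have correctly identified that the two censoring results are exactly the starred replacements for (\ref{eq:auxdensity1}) and Theorem~\ref{th:expandedB1}, that the occupation-time clock $\zeta_x$ together with (\ref{eq:leveltime2}) replaces the $\pm1$-rate level-time identity (\ref{eq:leveltime1}), and that the inductive templates of Theorems~\ref{th:Psinintegral3}, \ref{th:Gxbeta}, \ref{th:upcrossings3} and Corollary~\ref{cor:sumdx1} then carry over verbatim with $C^k\mapsto C^{k*}$ and $D^{k\ell}\mapsto D^{k\ell*}$.
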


We can compute $\Pi^+(\cdot)$ and $\Pi^-(\cdot)$ as defined in (\ref{eq:Piplus1}) and (\ref{eq:Piminus1}) by using analogous arguments to the ones used in the proof of Theorem \ref{th:piplusminus1}, however, there is a considerable difference. In (\ref{eqn:mainaux4p5}) we used that the event $\{Q_t=0, \bm{A}_t\in\mathfrak{Z}^+\}$ implies $\{\bm{A}_{t^-}\in\mathfrak{Z}^-\}$, however, this is no longer true in the case $n^0>0$. In fact, $\{Q_t=0, \bm{A}_t\in\mathfrak{Z}^+\}$ implies that $\bm{A}_{t^-}\in\mathfrak{Z}^-\cup\mathfrak{Z}^0$, so that a distinction needs to be made between occupations of $\{Q_t\}_{t\ge 0}$ in $(0,\infty)$ due to jumps coming from $\mathfrak{Z}^-$ or from $\mathfrak{Z}^0$. The first case was already addressed in the proof of Theorem \ref{th:piplusminus1}, while the second one follows by analogous arguments and by noticing that 
\begin{align*}
\lim_{t\rightarrow\infty} &\mathds{E}_{\bm{\beta}}\left[\bm{A}_t\Indic{Q_{t}=0,\; \bm{A}_t\in\mathfrak{Z}^0}\right]\\
&=\lim_{t\rightarrow\infty} \mathds{E}_{\bm{\beta}}\left[\int_{s=0}^\infty \mathds{E}_{\bm{A}_t}\left[\bm{A}_s\Indic{\rho_-\in (0, \dd s), \;\bm{A}_{r}\in\mathfrak{Z}^0\;\forall r\in[\rho_-,s]}\right]\Indic{Q_{t}=0, \;\bm{A}_t\in\mathfrak{Z}^-}\right]\\
&=\lim_{t\rightarrow\infty} \mathds{E}_{\bm{\beta}}\left[\left(\int_{s=0}^\infty \bm{A}_tD^{-0} e^{C^{0}s}\dd s\right)\Indic{Q_{t}=0, \;\bm{A}_t\in\mathfrak{Z}^-}\right]\\
&=\lim_{t\rightarrow\infty} \mathds{E}_{\bm{\beta}}\left[\bm{A}_t\Indic{Q_{t}=0, \;\bm{A}_t\in\mathfrak{Z}^-}\right]D^{-0} (-C^{0})^{-1}.
\end{align*}
Thus, the average limit orbit values in $\mathfrak{Z}^0$ on the event $\{Q_t=0\}$ can be computed once we compute the average limit orbit values in $\mathfrak{Z}^-$ on the event $\{Q_t=0\}$ as $t\rightarrow\infty$. The latter is computed similarly to (\ref{eq:statdown1}): for any $\bm{\beta}\in\mathfrak{Z}^-$,
\begin{align*}
\lim_{t\rightarrow\infty} \mathds{E}_{\bm{\beta}}\left[\bm{A}_t\Indic{Q_{t}=0, \;\bm{A}_t\in\mathfrak{Z}^-}\right]
&=\lim_{y\rightarrow\infty}c_-^*\mathds{E}_{\bm{\beta}}\left[\bm{O}_{y}\right]= c_-^*\bm{v}_0^*,
\end{align*}
where $c_-^*:=\lim_{t\rightarrow \infty}\mathds{P}(Q_t=0, \bm{A}_t\in\mathfrak{Z}^-)$ and $\bm{v}_0^*$ is the left eigenvector with $\bm{v}_0^*\bm{1}=1$ associated to the eigenvalue $0$ of $C^{+*} + \Psi^*D^{-+*}$. Analogous to Condition \ref{cond:eigen1}, we assume that such an eigenvector $\bm{v}_0^*$ is unique.

We now consider one final component
in order to compute the stationary distribution of $\{Q_t\}_{t\ge 0}$, which is
\begin{align}\label{eq:Pizero1}\Pi^0(x) :=\lim_{t\rightarrow\infty} \frac{\dd}{\dd x} \mathds{E}_{\bm{\beta}}\left[\bm{A}_t\Indic{Q_t\in(0,x), \;\bm{A}_t\in\mathfrak{Z}^0 }\right], \quad x> 0.
\end{align}
This can be computed using the following result whose proof is similar to that of Theorem \ref{th:piplusminus1}.

%
\begin{theorem}
Let $\bm{\alpha}\in\mathfrak{Z}^+$ and $h>0$. Then
\begin{align*}
& \mathds{E}_{\bm{\alpha}} \left[\int_{s=0}^\infty \bm{A}_s\Indic{s<\tau_-, \;Q_s\in[x,x+h), \;\bm{A}_s\in\mathfrak{Z}^0}\dd s\right] \\
& =\bm{\alpha}e^{(C^{+*} + \Psi^*D^{-+*})x}[D^{+0} + \Psi^*D^{-0}](-C^{0})^{-1} h + o(h).
\end{align*}
\end{theorem}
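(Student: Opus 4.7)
The plan is to reduce the integral to a sum over excursions of $\bm{\mathcal{A}}$ in $\mathfrak{Z}^0$, evaluate each summand via the strong Markov property, and then relate the resulting sum to the upcrossing and downcrossing counts $\mathcal{U}^y$ and $\mathcal{D}^y$ already controlled by Theorem~\ref{th:downcrossingmulti1}. Since the rates vanish on $\mathfrak{Z}^0$, the level $R$ (and hence $Q_s=R_s$ on $\{s<\tau_-\}$) is constant throughout any excursion in $\mathfrak{Z}^0$. Let $T_1<T_2<\cdots$ denote the successive entry times of $\bm{\mathcal{A}}$ into $\mathfrak{Z}^0$, and apply Theorem~\ref{th:expandedB1} with $k=0$ to obtain $\mathds{E}_{\bm{\beta}}[\int_0^{\rho_0}\bm{A}_s\dd s]=\bm{\beta}(-C^0)^{-1}$ for $\bm{\beta}\in\mathfrak{Z}^0$. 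The strong Markov property then yields
\[
\mathds{E}_{\bm{\alpha}}\!\left[\int_0^\infty\!\bm{A}_s\Indic{s<\tau_-,\,Q_s\in[x,x+h),\,\bm{A}_s\in\mathfrak{Z}^0}\dd s\right] =\mathds{E}_{\bm{\alpha}}\!\left[\sum_{k:T_k<\tau_-}\!\bm{A}_{T_k}\Indic{R_{T_k}\in[x,x+h)}\right]\!(-C^0)^{-1}.
\]

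Next, each entry $T_k$ is a jump either from $\mathfrak{Z}^+$ or from $\mathfrak{Z}^-$ into $\mathfrak{Z}^0$, so the sum splits accordingly. The PDMP jump compensator \cite{Davis:1984vi}, combined with the identity $\sum_{i,j}\bm{A}_s\widehat{D}^{+0}_{ij}\Indic{\bm{A}_s\in\mathfrak{Z}^+_i}=\bm{A}_s D^{+0}\Indic{\bm{A}_s\in\mathfrak{Z}^+}$, converts the sum of post-jump orbit values over $\mathfrak{Z}^+$-to-$\mathfrak{Z}^0$ jumps into a Lebesgue integral:
\[
\mathds{E}_{\bm{\alpha}}\!\left[\sum_{k:\bm{A}_{T_k^-}\in\mathfrak{Z}^+,T_k<\tau_-}\!\bm{A}_{T_k}\Indic{R_{T_k}\in[x,x+h)}\right] = \mathds{E}_{\bm{\alpha}}\!\left[\int_0^{\tau_-}\!\bm{A}_s\Indic{R_s\in[x,x+h),\,\bm{A}_s\in\mathfrak{Z}^+}\dd s\right]\!D^{+0},
\]
and its analogue using $D^{-0}$ holds for jumps from $\mathfrak{Z}^-$. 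A time-to-level change of variables (valid because $R$ advances at rate $\pm1$ on $\{\bm{A}_s\in\mathfrak{Z}^\pm\}$) delivers the pathwise identity
\[
\int_0^{\tau_-}\!\bm{A}_s\Indic{R_s\in[x,x+h),\,\bm{A}_s\in\mathfrak{Z}^+}\dd s = \int_x^{x+h}\!\sum_{u\in\mathcal{U}^y}\!\bm{A}_u\,\dd y,
\]
and its analogue involving $\mathcal{D}^y$. Theorem~\ref{th:downcrossingmulti1} then furnishes $\mathds{E}_{\bm{\alpha}}[\sum_{u\in\mathcal{U}^y}\bm{A}_u] = \bm{\alpha} e^{(C^{+*}+\Psi^*D^{-+*})y}$ and $\mathds{E}_{\bm{\alpha}}[\sum_{d\in\mathcal{D}^y}\bm{A}_d] = \bm{\alpha} e^{(C^{+*}+\Psi^*D^{-+*})y}\Psi^*$. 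Integrating over $y\in[x,x+h]$ and using the local Lipschitz continuity of the matrix exponential to get $\int_x^{x+h}e^{(C^{+*}+\Psi^*D^{-+*})y}\dd y = e^{(C^{+*}+\Psi^*D^{-+*})x}h+o(h)$, then collecting both contributions and postmultiplying by $(-C^0)^{-1}$, yields the claimed formula.

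The main obstacle is the rigorous justification of the compensator identity converting the sum over $\mathfrak{Z}^\pm$-to-$\mathfrak{Z}^0$ jump epochs into a Lebesgue integral. This is a standard application of the PDMP marked-point-process theory of \cite{Davis:1984vi}, requiring that the integrand $\Indic{R_s\in[x,x+h),\,\bm{A}_s\in\mathfrak{Z}^+}$ be bounded (which it is) and that the cumulative jump intensities remain finite on $[0,\tau_-]$ (ensured by Condition~\ref{cond:boundedness}). The $o(h)$ remainder, beyond the matrix-exponential continuity, requires that $\mathds{E}_{\bm{\alpha}}[|\mathcal{U}^y|]$ and $\mathds{E}_{\bm{\alpha}}[|\mathcal{D}^y|]$ be bounded uniformly for $y\in[x,x+h]$, which itself follows from the exponential-form expressions in Theorem~\ref{th:downcrossingmulti1}.
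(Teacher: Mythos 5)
Your proof is correct, and it supplies exactly the additional ingredients that the paper's terse remark (``whose proof is similar to that of Theorem~\ref{th:piplusminus1}'') leaves to the reader. The core decomposition you use is the one the paper intends: reduce the $\mathfrak{Z}^0$-occupation integral to a sum over $\mathfrak{Z}^0$-sojourns (each contributing $\bm{A}_{T_k}(-C^0)^{-1}$ by Theorem~\ref{th:expandedB1} and the strong Markov property), account for the entry points via $D^{+0}$ and $D^{-0}$, and convert the resulting $\mathfrak{Z}^\pm$-occupation integrals to level integrals of $\sum_{u\in\mathcal{U}^y}\bm{A}_u$ and $\sum_{d\in\mathcal{D}^y}\bm{A}_d$ via the rate-$\pm1$ time-to-level change of variables, after which Theorem~\ref{th:downcrossingmulti1} closes the argument.

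Where your write-up differs in technique from the paper's house style: you invoke the marked-point-process compensator for the PDMP to equate the sum of post-jump orbit values with $\mathds{E}\big[\int\bm{A}_s D^{k0}\Indic{\cdots}\dd s\big]$, whereas the paper elsewhere (Theorems~\ref{th:Psinintegral3}, \ref{th:Gxbeta}, \ref{th:upcrossings3}) handles the analogous step through an explicit induction on complexity combined with infinitesimal ``$\dd y$'' conditioning, using Lemma~\ref{lem:basicpropFRAP1} (or Corollary~\ref{cor:general1}) as the base building block. Both routes are legitimate; the compensator route is shorter and avoids the induction, while the paper's route stays within the self-contained set of tools it develops. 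Two points worth making explicit in a polished version: (i) $(-C^0)^{-1}$ is well-defined because $e^{C^0 t}\bm{1}\to\bm{0}$ by condition (\ref{eq:jumpshappen1}) for both $k=+$ and $k=-$, combined with Lemma~\ref{lem:bminimality}; (ii) the entire $\mathfrak{Z}^0$-sojourn beginning at $T_k$ stays before $\tau_-$ whenever $T_k<\tau_-$ and $R_{T_k}\in[x,x+h)$ with $x\ge 0$, since $R$ is flat on $\mathfrak{Z}^0$ and $R_{T_k}=0$ would force $\tau_-\le T_k$ — you use this implicitly and it is the reason the sum and the integral agree term by term.
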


The previous leads to the following characterization of the stationary behaviour of $\{Q_t\}_{t\ge 0}$.
\begin{theorem}
Let $\Pi^+$, $\Pi^-$ and $\Pi^0$ be defined as in (\ref{eq:Piplus1}), (\ref{eq:Piminus1}) and (\ref{eq:Pizero1}), respectively. Then, for $x>0$
\begin{align*}
\Pi^+(x)& = c_-^*\bm{v}_0^*D^{-+*}e^{(C^{+*} + \Psi^*D^{-+*})x}\\
\Pi^-(x) &= c_-^*\bm{v}_0^*D^{-+*}e^{(C^{+*} + \Psi^*D^{-+*})x}\Psi^*\\
\Pi^0(x) &= c_-^*\bm{v}_0^*D^{-+*}e^{(C^{+*} + \Psi^*D^{-+*})x}[D^{+0} + \Psi^*D^{-0}](-C^0)^{-1},
\end{align*}
where $c_-^*:=\lim_{t\rightarrow \infty}\mathds{P}(Q_t=0, \bm{A}_t\in\mathfrak{Z}^-)$ is of the form
\begin{align*}
c_-^*=\left(1 - \bm{v}_0^*D^{-+*}(C^{+*} + \Psi^*D^{-+*})^{-1}(2\bm{1} + [D^{+0} + \Psi^*D^{-0}](-C^0)^{-1}\bm{1})\right)^{-1}.
\end{align*}
Furthermore, the stationary density function of $\{Q_t\}_{t\ge 0}$ as defined in (\ref{eq:defpi1}) is given by
\[\pi(x) = \Pi^+(x)\bm{1} + \Pi^-(x)\bm{1} + \Pi^0(x)\bm{1} = 2\Pi^+(x)\bm{1} + \Pi^0(x)\bm{1}, \quad x>0.\]
\end{theorem}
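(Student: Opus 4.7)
The plan is to assemble the statement from three ingredients developed earlier in this section: (i) the analogue of Theorem~\ref{th:piplusminus1} with the starred matrices, (ii) the $\mathfrak{Z}^0$-censoring identities in Corollary~\ref{cor:general1} and Theorem~\ref{th:meanmixedUandZ}, and (iii) a single normalization equation coming from $\bm{A}_t\bm{1}=1$. First, I would establish the limit $\lim_{t\to\infty}\mathds{E}[\bm{A}_t\Indic{Q_t=0,\,\bm{A}_t\in\mathfrak{Z}^-}] = c_-^*\bm{v}_0^*$ by repeating the time-change argument between (\ref{eq:statdown1}) and (\ref{eq:limitOy1}), but now invoking Theorem~\ref{th:downcrossingmulti1} in place of Theorem~\ref{th:Gxbeta} so that the relevant exponent becomes $C^{-*}+D^{-+*}\Psi^*$ and the stationary record direction is $\bm{v}_0^*$.

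Next, for $\Pi^+(x)$ and $\Pi^-(x)$ I would mirror the proof of Theorem~\ref{th:piplusminus1} verbatim, with two adjustments. The event $B_{t-s}$ must now be decomposed according to whether the infinitesimal transition into $\mathfrak{Z}^+$ is direct or passes through $\mathfrak{Z}^0$; by Corollary~\ref{cor:general1} these two contributions combine into a single factor $\bm{A}_{t-s}D^{-+*}$. The upcrossing/downcrossing sums $\sum_{u\in\mathcal{U}^x}\bm{A}_u$ and $\sum_{d\in\mathcal{D}^x}\bm{A}_d$ are then given by the starred formulas in Theorem~\ref{th:downcrossingmulti1}. Combining these with the limit $c_-^*\bm{v}_0^*$ gives exactly the stated expressions for $\Pi^+(x)$ and $\Pi^-(x)$. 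For $\Pi^0(x)$, the theorem immediately preceding the statement already delivers the integrand $e^{(C^{+*}+\Psi^*D^{-+*})x}[D^{+0}+\Psi^*D^{-0}](-C^0)^{-1}$ as a contribution per unit level-interval $h$ per excursion started from $\bm{A}_0\in\mathfrak{Z}^+$; converting this into a stationary density via the same Fubini/shift manipulation used for $\Pi^+$ and then premultiplying by $c_-^*\bm{v}_0^*D^{-+*}$ yields the claimed formula.

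Finally, $c_-^*$ is fixed by the identity $\lim_{t\to\infty}\mathds{E}[\bm{A}_t]\bm{1}=1$. The mass on $\{Q_t=0\}$ splits as a $\mathfrak{Z}^-$ part of size $c_-^*$ plus a $\mathfrak{Z}^0$ part, the latter obtained by the short censoring computation (analogous to the one displayed before Theorem~\ref{th:downcrossingmulti1}) which gives $c_-^*\bm{v}_0^*D^{-0}(-C^0)^{-1}$. The positive part contributes $\int_0^\infty(\Pi^+(x)+\Pi^-(x)+\Pi^0(x))\bm{1}\,\dd x$. Using $\Psi^*\bm{1}=\bm{1}$ to simplify $\Pi^-(x)\bm{1}=\Pi^+(x)\bm{1}$ and performing the elementary integration $\int_0^\infty e^{(C^{+*}+\Psi^*D^{-+*})x}\dd x = -(C^{+*}+\Psi^*D^{-+*})^{-1}$ collects all terms into a single linear equation in $c_-^*$, whose solution is the stated reciprocal. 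The expression $\pi(x) = 2\Pi^+(x)\bm{1} + \Pi^0(x)\bm{1}$ then follows from the same $\Psi^*\bm{1}=\bm{1}$ simplification.

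The main obstacle is bookkeeping: tracking all possible transitions involving $\mathfrak{Z}^0$ (both in the instantaneous contribution driving the level upwards and in the occupation time of $\{Q_t>0,\bm{A}_t\in\mathfrak{Z}^0\}$) without double counting, and ensuring that the $\mathfrak{Z}^0$ mass on $\{Q_t=0\}$ is included in the normalization via $D^{-0}(-C^0)^{-1}$ rather than absorbed into $c_-^*$. Once this bookkeeping is organized consistently, each block reduces to an application of a result already proved, so no new technical machinery is needed.
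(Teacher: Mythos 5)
Your outline tracks the paper's intended route: the starred matrices from Corollary~\ref{cor:general1} and Theorem~\ref{th:meanmixedUandZ} absorb the $\mathfrak{Z}^0$ excursions, the limit $c_-^*\bm{v}_0^*$ plays the role that $c_-\bm{v}_0$ played in Theorem~\ref{th:piplusminus1}, and $\Pi^0$ comes from the occupation-time theorem displayed immediately before the statement. The difficulty is the normalization step, which as you describe it does not produce the stated $c_-^*$. You correctly observe (this is the censoring computation the paper itself writes out just before Theorem~\ref{th:downcrossingmulti1}) that the mass on $\{Q_t=0\}$ contains both a $\mathfrak{Z}^-$ atom of size $c_-^*$ and a $\mathfrak{Z}^0$ atom of size $c_-^*\bm{v}_0^*D^{-0}(-C^0)^{-1}\bm{1}$, and you say both are to enter the normalization. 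But integrating the three densities and using $\Psi^*\bm{1}=\bm{1}$ gives
\[
\int_0^\infty\pi(x)\,\dd x = -c_-^*\bm{v}_0^*D^{-+*}(C^{+*}+\Psi^*D^{-+*})^{-1}\left(2\bm{1}+[D^{+0}+\Psi^*D^{-0}](-C^0)^{-1}\bm{1}\right),
\]
and the stated $c_-^*$ is exactly the solution of $1 = c_-^* + \int_0^\infty\pi(x)\,\dd x$, with no contribution from a $\mathfrak{Z}^0$ atom at level zero. If you also add that atom, as your plan prescribes, the resulting $c_-^*$ acquires an extra additive term $\bm{v}_0^*D^{-0}(-C^0)^{-1}\bm{1}$ inside the parenthesis and no longer matches the target formula. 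So you cannot simultaneously include the $\mathfrak{Z}^0$-at-zero mass and arrive at the stated reciprocal: either your accounting over-counts (which your proof would have to justify and explain away), or the target formula requires the extra term and a careful execution of your plan would reveal this. Resolve this tension before claiming the normalization goes through; the remaining assembly steps ($c_-^*\bm{v}_0^*$ limit, $D^{-+*}$ decomposition at the boundary, $\Pi^\pm$ and $\Pi^0$ from the starred first-passage results) are sound.
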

\section*{Acknowledgements.}
The first, second and fourth authors are affiliated with Australian Research Council (ARC) Centre of Excellence for Mathematical and Statistical Frontiers (ACEMS). The second and fourth authors also acknowledge the support of the ARC DP180103106 grant. 
\bibliographystyle{abbrv}
\bibliography{oscar}

\begin{thebibliography}{10}

\bibitem{Asmussen:1995jm}
S.~Asmussen.
\newblock {Stationary distributions for fluid flow models with or without
  {B}rownian noise}.
\newblock {\em Stochastic Models}, 11(1):21--49, 1995.

\bibitem{asm:99}
S.~Asmussen and M.~Bladt.
\newblock Point processes with finite-dimensional probabilities.
\newblock {\em Stochastic Processes and their Applications}, 82(1):127--142,
  1999.

\bibitem{Bean:2010jc}
N.~G. Bean and B.~F. Nielsen.
\newblock {Quasi-birth-and-death processes with rational arrival process
  components}.
\newblock {\em Stochastic Models}, 26(3):309--334, 2010.

\bibitem{bean2005algorithms}
N.~G. Bean, M.~M. O'Reilly, and P.~G. Taylor.
\newblock Algorithms for return probabilities for stochastic fluid flows.
\newblock {\em Stochastic Models}, 21(1):149--184, 2005.

\bibitem{bean2005hitting}
N.~G. Bean, M.~M. O'Reilly, and P.~G. Taylor.
\newblock Hitting probabilities and hitting times for stochastic fluid flows.
\newblock {\em Stochastic Processes and their Applications}, 115(9):1530--1556,
  2005.

\bibitem{bladt2017matrix}
M.~Bladt and B.~F. Nielsen.
\newblock {\em Matrix-Exponential Distributions in Applied Probability},
  volume~81.
\newblock Springer, 2017.

\bibitem{Davis:1984vi}
M.~H. Davis.
\newblock {Piecewise-deterministic Markov processes: A general class of
  non-diffusion stochastic models}.
\newblock {\em Journal of the Royal Statistical Society. Series B
  (Methodological)}, pages 353--388, 1984.

\bibitem{guo2001nonsymmetric}
C.-H. Guo.
\newblock Nonsymmetric algebraic {R}iccati equations and {W}iener--{H}opf
  factorization for {M}-matrices.
\newblock {\em SIAM Journal on Matrix Analysis and Applications},
  23(1):225--242, 2001.

\bibitem{Karandikar:1995vo}
R.~L. Karandikar and V.~G. Kulkarni.
\newblock {Second-order fluid flow models: reflected {B}rownian motion in a
  random environment}.
\newblock {\em Operations Research}, 43(1):77--88, 1995.

\bibitem{latouche2018analysis}
G.~Latouche and G.~T. Nguyen.
\newblock Analysis of fluid flow models.
\newblock {\em Queueing Models and Service Management}, 1(2):1--29, 2018.

\bibitem{Rogers:1994uoa}
L.~C.~G. Rogers.
\newblock {Fluid models in queueing theory and Wiener-Hopf factorization of
  Markov chains}.
\newblock {\em The Annals of Applied Probability}, 4(2):390--413, 1994.

\end{thebibliography}
\end{document}